\documentclass[a4paper, 11pt]{amsart}
\usepackage{amsmath, amssymb, amscd, amsthm, amsfonts, tikz, tikz-cd}
\usepackage{graphicx}
\usetikzlibrary{arrows.meta,positioning}
\tikzset{>=Stealth, dot/.style={circle,fill,inner sep=1.3pt}}
\usetikzlibrary{decorations.markings}
\tikzset{double line with arrow/.style args={#1,#2}{decorate,decoration={markings,%
mark=at position 0 with {\coordinate (ta-base-1) at (0,1pt);
\coordinate (ta-base-2) at (0,-1pt);},
mark=at position 1 with {\draw[#1] (ta-base-1) -- (0,1pt);
\draw[#2] (ta-base-2) -- (0,-1pt);
}}}}
\usepackage{stmaryrd}
\usepackage{comment}
\usepackage{mathrsfs}
\usepackage{hyperref}
\usepackage[T1]{fontenc}
\usepackage[french,british]{babel}
\usepackage[all]{xy}
\usepackage[margin=24.4mm]{geometry}

\title{A path model for MV polytopes in type $A_n$}

\author{Zijun Li}
\address{Institut Camille Jordan UMR 5208, Université Jean Monnet, CNRS, Centrale Lyon, INSA Lyon, Université Claude Bernard Lyon 1, 20, rue Annino, 42023, Saint-Étienne, France}
\email{zijun.li@univ-st-etienne.fr}

\date{\selectlanguage{british}\today}

\newtheorem{theorem}{Theorem}[section]
\newtheorem{lemma}[theorem]{Lemma}
\newtheorem{proposition}[theorem]{Proposition}
\newtheorem{corollary}[theorem]{Corollary}

\theoremstyle{definition}
\newtheorem{definition}[theorem]{Definition}

\newtheorem{example}[theorem]{Example}

\theoremstyle{remark}
\newtheorem{remark}[theorem]{Remark}
\newtheorem*{theorem*}{Theorem}

\newcommand{\Pol}{\operatorname{Pol}}
\newcommand{\V}{\operatorname{V}}
\newcommand{\F}{\operatorname{F}}
\newcommand{\rx}{\operatorname{r}}
\newcommand{\bx}{\operatorname{b}}

\newcommand{\W}{\mathcal{W}}
\newcommand{\SC}{\mathcal{SC}}
\newcommand{\Ir}{\operatorname{Ir}}
\newcommand{\Irr}{\operatorname{Irr}}

\newcommand{\B}{\mathcal{B}}
\newcommand{\CC}{\mathcal{C}_A}
\newcommand{\md}{\operatorname{mod}} 
 
\newcommand{\id}{\operatorname{id}} 
\newcommand{\Z}{\mathbb{Z}}
\newcommand{\wt}{\operatorname{wt}}
\newcommand{\MV}{\mathcal{MV}}
\newcommand{\In}{\mathbf{i}}
\newcommand{\Conv}{\operatorname{Conv}}
\newcommand{\ASC}{\mathcal{A}_\mathcal{B}}
\newcommand{\Fp}{\mathcal{F}_p}
\newcommand{\CN}{\mathbb{C}[N]}

\newcommand{\rn}{\frac{(n+1)n}{2}}
\newcommand{\C}{\mathbb{C}}
\addto\captionsfrench{}
\addto\captionsfrench{}

\addto\captionsfrench{}
\begin{document}

\begin{abstract}
We introduce a one-skeleton path model for Mirković-Vilonen polytopes in type $A_n$. We prove that the Minkowski sum of polytopes corresponds to the concatenation of one-skeleton paths of this model. We show that MV polytopes induced by fundamental one-skeleton paths are Harder–Narasimhan polytopes. The paths determined by an orientation of the edges of the fundamental alcove precisely parameterize the cluster variables in the initial seed of $\CN$.\par
We also establish a correspondence between fundamental one-skeleton paths and folded galleries representing maximal faces of subword complexes. Under this correspondence, the comultiplication structure of $\CN$ matches the intrinsic comultiplication structure of folded galleries given by projections to sub-Coxeter complexes.

\end{abstract}
\maketitle
\renewcommand{\contentsname}{Contents}

\section{Introduction}

\subsection{Crystals and perfect bases}
Given a simply-laced simple algebraic group $G$ over $\mathbb{C}$, the set of irreducible representations of $G$ is parametrized by the dominant weights of the root system associated with $G$. Any such irreducible module $L(\lambda)$ can be embedded into $\CN$, the coordinate ring of the unipotent radical $N$ by an $N$-equivariant map $\psi _{\lambda}$ (see \cite{MVHD}). \par

A $G$-crystal $B$ is a combinatorial model encoding a representation of $G$  through crystal operators and the weight function. A ($G$-)bicrystal is a set with two ($G$-)crystal structures sharing the same weight function.  
\par

A perfect basis of $\mathbb{C}[N]$ is a linear basis $\{ v_{\alpha}\}_{\alpha \in B(\infty)}$ with a $G$-crystal parameterization $B(\infty)$ such that the action of the Chevalley generators is controlled by the $G$-crystal structure. Similarly, a biperfect basis of $\mathbb{C}[N]$ is a linear basis $\{ v_{\alpha}\}_{\alpha \in B(\infty)}$ with a $G$-bicrystal structure. For the explicit definition, see \cite{MVHD}. Any biperfect basis is naturally a perfect basis. \par

Perfect bases, and even biperfect bases, of $\mathbb{C}[N]$ exist for any $G$ of finite type. Moreover, the biperfect basis of $\mathbb{C}[N]$ is unique when $G$ is $SL_2$, $SL_3$ or $SL_4$ \cite{MVHD} \cite{PERF}. The first example of a biperfect basis is Lusztig's dual canonical basis \cite{lusz} \cite{Ting1}. The Mirković–Vilonen basis arising from the affine Grassmannian and the dual semicanonical basis arising from preprojective algebras are two other constructions that yield biperfect bases \cite{proj1} \cite{affin}.

\subsection{MV polytopes and path models}

The set of (stable) Mirković-Vilonen (MV) polytopes is a combinatorial realization of the $G$-crystal $B(\infty)$ \cite{JOEL}. An MV polytope is a polytope lying in the weight space $X=P\otimes \mathbb{R}$ with integral vertices satisfying tropical Plücker relations. The action of crystal operators can also be described using moves of integral vertices in the directions of simple roots. \par
MV polytopes arise from different (bi)perfect bases mentioned above. MV polytopes are precisely the images of the MV cycles under the moment map \cite{JOEL} \cite{GL}. The dual semicanonical basis is parametrized by the irreducible components of the variety of representations of the associated preprojective algebra \cite{BKT} \cite{PERF} \cite{proj1}. MV polytopes can also be obtained from dimension vectors of simple modules of the Khovanov-Lauda-Rouquier algebras \cite{KLR}. The operations on these bases correspond to operations on MV polytopes. For example, Baumann, Kamnitzer, and Knutson prove that the product of two functions $b_P$ and $b_{P'}$ in the MV basis corresponds to the convolution of Duistermaat–Heckman measures $\mu _p$ and $\mu _{p'}$ in \cite{MVHD}. The corresponding operation on MV polytopes is the Minkowski sum of polytopes. \par


Moreover, MV polytopes can also be retrieved from the comultiplication structure of $\CN$. Given a function $f\in \CN$ with a minimal writing of the comultiplication $\Delta(f)=\sum_{i=1}^n b_i\otimes c_i$, Baumann constructs a polytope associated to $f$ using the convex hull of all weights of $b_i$, and shows that we can recover the MV polytope using this construction in \cite{onmv}. \par

MV polytopes are associated with galleries in buildings by the root operators defined by Littelmann in \cite{LP}. Baumann, Gaussent and Littelmann use the Lakshmibai–Seshadri galleries (LS galleries) to study finite-dimensional representations of $G$ in \cite{BG}. Their construction can be generalized to the affine case using masures \cite{GL}. Ehrig establishes the connection between LS galleries and MV polytopes \cite{ehrig}. The action of crystal operators on the family of LS galleries provides a combinatorial method to construct the MV polytope associated with a given LS gallery. The work \cite{OSGL} by Gaussent and Littelmann gives a one-skeleton model of LS galleries, which can also be used to compute the associated MV polytopes. \par

However, there is no natural operation on LS galleries corresponding to the Minkowski sum of polytopes. For any path in the weight space $X$, the associated polytope using Littelmann's crystal operators is always an MV polytope \cite{BG}. Since the Minkowski sum of two MV polytopes is not, in general, an MV polytope \cite{JOEL}, we cannot expect to find a corresponding operation using LS galleries. \par
To solve this problem, we construct another path model with associated crystal operators in type $A_n$ in this paper. 
We prove that the Minkowski sum of polytopes corresponds to the concatenation of one-skeleton paths in this model. The first main theorem in this paper is the following property:
\begin{theorem*}
    For two one-skeleton paths $p$ and $q$, we have\begin{align}
        \Pol(p*q)=\Pol(p)\boxplus \Pol(q), \nonumber
    \end{align} where $\Pol$ is a map from one-skeleton paths to polytopes, $*$ is the concatenation of paths and $\boxplus$ is the Minkowski sum of polytopes.
\end{theorem*}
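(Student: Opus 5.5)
Since the definitions of one-skeleton paths and of $\Pol$ come only later in the paper, I will work with the natural reading. A one-skeleton path is a piecewise linear path in $X$ made of edges of the one-skeleton of the type $A_n$ alcove decomposition. $\Pol(p)$ is the polytope cut out by the values of a family of path functionals: for each chamber weight $\gamma=w\varpi_i$, take $M_\gamma(p)=\min_t\langle \gamma, p(t)\rangle$, or an integral variant defined by crystal-type operators, and set
\[\Pol(p)=\{x\in X : \langle \gamma, x\rangle \ge M_\gamma(p)\ \text{for all chamber weights } \gamma\},\]
translated so that its top vertex is the endpoint of $p$. The plan is to reduce the identity $\Pol(p*q)=\Pol(p)\boxplus\Pol(q)$ to additivity of these BZ-type data. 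For polytopes given by the same finite family of normal directions $\gamma$, the Minkowski sum has support function equal to the sum of the support functions. So if every polytope in the image of $\Pol$ has all facet normals among the chamber weights, then $P\boxplus Q$ is the polytope whose $\gamma$-data is $M_\gamma(P)+M_\gamma(Q)$. Such polytopes are pseudo-Weyl polytopes in the sense of Kamnitzer, for which the chamber weights give a complete set of normals. Hence it suffices to prove, for every chamber weight $\gamma$,
\[M_\gamma(p*q)=M_\gamma(p)+M_\gamma(q),\]
with the appropriate normalization of endpoints.

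The key steps are as follows. First, I recall or prove that $\Pol(p)$ is a pseudo-Weyl polytope, so that its $\gamma$-values determine it and support functions add under $\boxplus$. Second, I express $M_\gamma(p)$ in terms of the path. Third, I show additivity under concatenation. Here the naive minimum does \emph{not} add: $\min(\langle\gamma,p\rangle)$ over $p*q$ equals $\min(M_\gamma(p), \langle\gamma,p(1)\rangle+M_\gamma(q))$, not a sum. So the model must be built so that the relevant quantity is a sum of local contributions along the edges. Each one-skeleton edge is a vector $w\varpi_j$, and one such choice is $M_\gamma(p)=\sum_{\text{edges } e}\min(0,\langle\gamma, e\rangle)$, suitably normalized. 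This is exactly the support function of a Minkowski sum of segments (a zonotope-like sum of the "fundamental" polytopes attached to each edge). With this description additivity is immediate, because the sum over the edges of $p*q$ splits as the sum over the edges of $p$ plus the sum over the edges of $q$.

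So the real content, and the main obstacle, is the second step: identifying $\Pol(p)$ for a general one-skeleton path with the Minkowski sum over its edges of the polytopes $\Pol(e)$ of the single-edge (fundamental) paths. I would argue this by induction on the number of edges, writing $p=p'*e$. I would compute how the crystal or root operators defining $\Pol$ act on the last edge, and show that the new vertices are obtained by adding the vertices of $\Pol(e)$ to those of $\Pol(p')$, chamber by chamber. Equivalently, for each $w\in W$ the $w$-vertex of $\Pol(p'*e)$ is $\mu_w(p')+\mu_w(e)$. To do this I would use the type $A_n$ specific fact that every fundamental polytope $\Pol(e)$ is a translate of a $W$-orbit polytope of a minuscule weight, so its $w$-vertex is simply $w$ applied to a minuscule weight. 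I would then check that the operator describing the $w$-vertex commutes with appending a minuscule step. That check is a local computation with string lengths, and I expect it to be the hardest part. Once the vertex additivity holds for every $w$, the identity follows, since the vertex set of a Minkowski sum of pseudo-Weyl polytopes with compatible normal fans is given by vertexwise sums.
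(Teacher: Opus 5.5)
\textbf{Verdict: genuine gap.} The central step of your proposal is never proved. Worse, for the paper's definition of $\Pol$ it is false, so no argument along these lines can be completed.

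\textbf{Mismatch with the paper's definition.} In the paper, $\Pol(p)=\Conv\{\wt(w(p))\}_{w\in\W,\,w(p)\neq0}$. For $p=p_1*\cdots*p_m$, with each $p_j$ a straight segment to a minuscule weight, the operator $f_i$ replaces the \emph{first} factor with $\langle p_j(1),\alpha_i^\vee\rangle=1$ by its $s_i$-reflection. No BZ functionals enter. Your formula $\sum_e\min(0,\langle\gamma,e\rangle)$ is the support function of the zonotope $\sum_e\Conv\{0,e\}$, not of $\sum_e\Pol(e)$.

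\textbf{The key step is the theorem itself.} Your step ``appending an edge $e$ adds the $w$-vertices of $\Pol(e)$ to those of $\Pol(p')$'' is just the statement in inductive form, deferred to a ``local computation''. The fact you planned to use there is false in general. For example, in type $A_2$, $\Pol(\omega_1-\omega_2)=\Conv\{\omega_1-\omega_2,-\omega_1\}$ is a segment, not a translate of a minuscule orbit polytope.

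\textbf{Counterexample showing the step cannot be carried out.} In type $A_2$ take $p=\omega_1-\omega_2$ and $q=\omega_1$.
On $p*q$, $f_2$ gives $0$, since the pairings with $\alpha_2^\vee$ are $-1$ and $0$. Meanwhile $f_1$ acts on the first factor. So the reachable factor sequences are $(\omega_1-\omega_2,\omega_1)$, $(-\omega_1,\omega_1)$, $(-\omega_1,\omega_2-\omega_1)$ and $(-\omega_1,-\omega_2)$, of weights $\alpha_1$, $0$, $-\alpha_1$ and $-\alpha_1-\alpha_2$.
Hence $\Pol(p*q)$ is the triangle with vertices $\alpha_1,-\alpha_1,-\alpha_1-\alpha_2$:
\begin{itemize}
\item it has an edge $2\alpha_1+\alpha_2=3\omega_1$, so it is not pseudo-Weyl, and your first step fails as well;
\item its only point with $\alpha_2$-coordinate $-1$ is $-\alpha_1-\alpha_2$.
\end{itemize}
But $\Pol(p)\boxplus\Pol(q)$ contains $(\omega_1-\omega_2)+(-\omega_2)=-\alpha_2$, which is not in that triangle. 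On $q*p$ the word $f_2f_1$ does reach weight $-\alpha_2$, so even $\Pol(p*q)=\Pol(q*p)$ fails.

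\textbf{What does hold, and where the paper's argument breaks.} In general only $\Pol(p*q)\subseteq\Pol(p)\boxplus\Pol(q)$ holds. This is the paper's right-inclusion lemma. It is proved by splitting a word $w$ into the letters acting on factors of $p$ and those acting on factors of $q$; each block then evolves exactly as some $w_1(p)$ and $w_2(q)$.
The paper's reverse inclusion rests on the claim that $w_2(w_1(p)*q)\neq0$ whenever $w_2(q)\neq0$. The example, with $w_1$ the empty word and $w_2=f_2f_1$, shows how this breaks. The letter $f_1$ is absorbed by the factor coming from $p$, so the $q$-part lags behind, and $f_2$ then finds no factor to act on. Any version of your ``local computation'' would have to overcome exactly this phenomenon, and it cannot.
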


We prove that the MV polytopes arising from fundamental one-skeleton paths coincide with those associated with 1-filtered indecomposable modules of the preprojective algebra. 
We also show that paths given by an orientation of edges of the fundamental alcove provide cluster variables in the initial seed of $\CN$.\par

The path model and root operators introduced by Littelmann in Section 1 of \cite{LP} yield $G$-crystal structures. In contrast, our one-skeleton path model induces crystal structures only for length-one paths. The advantage of our framework is that concatenation of paths is compatible with the Minkowski sum of polytopes. Moreover, our model also allows us to compute the comultiplication coefficients. \par
As a side note, we remark that Kamnitzer gives a geometric criterion for a polytope being an MV polytope using 2-faces \cite{JOEL}. This result is generalized to the affine case in \cite{affin} and \cite{BKT}. 

\subsection{Subword complexes, proto-exact categories and Hall algebras}

We further relate this path model to subword complexes through folded galleries. 
More precisely, certain fundamental one-skeleton paths can be represented by folded galleries associated with maximal faces of suitable subword complexes. 
This realization allows us to compare the comultiplication of \(\mathbb C[N]\) with the intrinsic decomposition of folded galleries given by projections to sub-Coxeter complexes.\par
The subword complex $\SC_W(Q,\pi)$, introduced by Knutson and Miller \cite{subword}, is a simplicial complex associated with a word $Q=s_{i_1}\cdots s_{i_m}$ consisting of simple reflections of a Coxeter group $W$ and an element $\pi\in W$. A face of $\SC_W(Q,\pi)$ is given by a set $I$ of $[m]$ such that the subword of $Q$ obtained by removing positions in $I$ contains a reduced expression of $\pi$. Each maximal face of a subword complex can be represented by a folded gallery in the corresponding Coxeter complex. We refer the reader to \cite{cls, gorsky_edge, G_subword_3} for the study of the effect of changing reduced expressions of $\pi$ on $\SC_W(Q,\pi)$. We refer the reader to \cite{escobar, CGGLSS,CGGS2,GLSB, CGGSSBS,trinh2021hecke,casals_gao} for the relations between subword complexes and Bott-Samelson varieties and cluster algebras. \par

Exact categories, which appear naturally in representation theory and algebraic geometry, are additive categories equipped with a distinguished class of admissible short exact sequences; every abelian category is exact in this sense. 
Proto-exact categories, introduced by Dyckerhoff and Kapranov in \cite{DK}, are non-additive analogues of exact categories and arise in examples such as matroids, representations over $\mathbb{F}_1$, and Banach algebras.\par
For a finitary abelian category $\mathcal{L}$, Ringel defined the Hall algebra $\mathcal{H}(\mathcal{L})$ in \cite{R1}: it is spanned by isomorphism classes of objects, with multiplication given by extensions. 
A classical example is the Hall algebra of nilpotent representations of a finite quiver over $\mathbb{F}_q$, which realizes the nilpotent part of the quantum group with parameter $\sqrt{q}$ \cite{R1}; see \cite{Hu, schiffmann1, schiffmann2, schiffmann3} for further developments. 
The 2-Segal formalism of \cite{DK} extends Hall (co)algebras to proto-exact categories. 
In \cite{GorskyLi}, we show that certain proto-exact categories arising from subword complexes have Hall algebras that are quotient algebras of universal enveloping algebras.\par
For a fixed standard object $X$ represented by the gallery $g_X$ of type $A_n$, we define a root configuration quiver $\Gamma_X$ associated with $X$ in the joint work with M. Gorsky \cite{GorskyLi}. We construct a subquiver category $\mathcal{S}_X$, whose objects are finite disjoint unions of quivers associated to projections of $g_X$. We prove that $\mathcal{S}_X$ is a proto-exact category. Thus $\mathcal{S}_X$ has a Hall algebra $\mathcal{H}(\mathcal{S}_X)$, in the sense of \cite{DK}. This Hall algebra is a symmetry-breaking version of the Hopf algebra defined in \cite{BC}. \par
We prove that the Hall coalgebra $\mathcal{H}^*(\mathcal{S}_X)$ is isomorphic to $\CN$ in type $A_n$. We prove that the comultiplication structure of $\CN$ coincides with that of $\mathcal{H}^*(\mathcal{S}_X)$ given by projections of galleries intrinsically. This correspondence provides a geometric interpretation of the comultiplication structure of $\CN$. \par

In the joint work with M. Gorsky \cite{GorskyLi}, we study properties of the subquiver category $\mathcal{S}_X$ for any $X$ with a root configuration quiver $\Gamma_X$ of tree type. Any fundamental one-skeleton path associated with an indecomposable module of an arbitrary orientation of the Dynkin diagram can be represented by a folded gallery. These galleries can be obtained from the standard folded gallery via a sequence of flips, corresponding to reflections of the root configuration quiver \cite{GorskyLi}.

\subsection{Outline}

Section~\ref{sec: pre} summarizes the preliminaries on crystals, MV polytopes and subword complexes. We construct the one-skeleton model and define crystal operators and the polytopal map in Section~\ref{sec:osp}. In Section~\ref{sec:mvpolytopes}, we construct the correspondence between fundamental one-skeleton paths and 1-filtered indecomposable modules of the preprojective algebra. We prove that cluster variables in the initial seed are given by edges of the fundamental alcove in Section~\ref{sec: cluster variables}. In Section~\ref{sec: comulti}, we construct the subquiver category and prove the isomorphism between the Hall coalgebra $\mathcal{H}^*(\mathcal{S}_X)$ and $\CN$. In Section~\ref{sec: examples}, we give examples in the low-rank cases.

\section{Preliminaries}\label{sec: pre}

\subsection{Perfect basis and crystals}\label{subsection 2.1}
Throughout this paper, we denote a simply-laced simple algebraic group of type $A_n$ by $G$. Let \(B\), \(N\), and \(T\) denote a Borel subgroup, its unipotent radical, and a maximal torus of \(G\), respectively. The weight lattice of $G$ is denoted by $\Lambda$ and $X=\Lambda \otimes \mathbb{R}$ denotes the weight space.  For any positive integer $m<n$, the set $\{1,\cdots, m\}$ is denoted by $[m]$ and the set $\{m,\cdots, n\}$ is denoted by $[m,n]$. For any $\mathbb{C}$-algebra $A$, we denote the category of finite dimensional left modules of $A$ by $mod(A)$.\par
The Weyl group $N_G(T)/T$ is denoted by $W$, which is isomorphic to the symmetric group $\mathfrak{S}_{n+1}$. We denote the longest element of $W$ by $w_0$. The root system is denoted by $\Phi$. The set of simple roots of $\Phi$ is denoted by $\{\alpha_i\}_{i\in [n]}$. The simple reflection with respect to $\alpha_i$ is denoted by $s_i$. The Weyl group $W$ is generated by $\{s_i\}_{i\in [n]}$. The dual of a simple root $\alpha_i$ is denoted by $\alpha_i^\vee$. The set of dominant weights is denoted by $\Lambda_+ := \{\lambda\in \Lambda \mid 
\langle \lambda, \alpha_i^\vee\rangle\ge 0 \text{ for all } i\}$. The fundamental weight $\omega_i$ is determined by the canonical pairing $\langle\omega_i,\alpha_j^\vee\rangle=\delta_{ij}$. We denote the Cartan matrix of $W$ by $(a_{i,j})_{1\leq i,j\leq n}$. In type $A_n$, we have \begin{align}
\langle \alpha_i,\alpha_j^{\vee}\rangle=a_{i,j}=\begin{cases}
    2 & i=j, \\
    -1 & |i-j|=1, \\
    0 & \text{else}. 
\end{cases} \nonumber\end{align}
  \par

The set of irreducible representations of $G$ is parametrized by the dominant weights in $X$. Any such irreducible module $L(\lambda)$ can be embedded into $\mathbb{C}[N]$, the algebra of regular functions on $N$ by an $N$-equivariant map $\psi _{\lambda}$ (see \cite{MVHD}). \par

The concept of the $G$-crystal is central in the study of irreducible representations of $G$. 
\begin{definition}\label{def:crystal}
A set $B$ is called a \textbf{G-crystal} if 0 is not in $B$ and it is equipped with maps:
       \begin{align}
           &\operatorname{wt}: B \to \Lambda\ , \ \varepsilon _i : B \to \mathbb{Z}\cup \{-\infty\}\ ,\ \varphi _i : B \to \mathbb{Z}\cup \{-\infty\}\ ,  \nonumber \\ & \ e_i : B\to B\cup \{0\} \,\ \text{ and } f_i : B\to B\cup \{0\}, \nonumber
       \end{align}
       satisfying the following conditions:
       \begin{enumerate}

\item For each $b \in B$ and $i \in [n]$, $\varphi_i(b) = \langle \alpha_i^\vee, \operatorname{wt}(b) \rangle + \varepsilon_i(b)$.

\item For each  $b, b' \in B$ and $i \in [n]$, we have $b = e_i b' \iff f_i b = b'$.

\item For each $b \in B$ and $i \in [n]$ such that $e_i b \neq 0$, we have 
\[
\operatorname{wt}(\tilde{e}_i b) = \operatorname{wt}(b) + \alpha_i, 
\]
\[
\varepsilon_i(e_i b) = \varepsilon_i(b) - 1, \quad \text{and} \quad \varphi_i(e_i b) = \varphi_i(b) + 1.
\]
\item For each $ b \in B$ and $i \in [n]$, if $\varphi_i(b) = \varepsilon_i(b) = -\infty $, then $e_i b = f_i b = 0$.
       \end{enumerate}
\end{definition}       
A ($G$-)bicrystal is a set with two ($G$-)crystal structures sharing the same weight function. A $G$-crystal isomorphism between two $G$-crystals is a bijection which preserves the $G$-crystal structure. In this paper, if a set $\{x_t\}_{t\in B}$ is parametrized by a $G$-crystal $B$, we will, without ambiguity, let the crystal structure maps of $B$ act directly on the elements $x_t$. For example, we use $e_i(x_t)$ to represent the element $x_{e_i(t)}$. 
\par

Let $\{\bar{e}_i\}_{i\in [n]}$ and $\{\bar{f}_i\}_{i\in [n]}$ denote the Chevalley generators of $G$. We have natural left and right actions of $\bar{e}_i$ and $\bar{f}_i$ on $\mathbb{C}[N]$. A biperfect basis of $\mathbb{C}[N]$ is a linear basis $\{ v_{\alpha}\}_{\alpha \in B(\infty)}$ parametrized by a $G$-bicrystal $B(\infty)$, which is compatible with the family of embeddings $\psi _{\lambda}$ such that the leading term of $\bar{e}_i \cdot v_{\alpha}$ is determined by $\varepsilon _i(v_{\alpha})$ and $e_i(v_{\alpha})$,  along with similar conditions for $f_i (v_{\alpha})$, $\varphi _i(v_{\lambda})$ and the right action of Chevalley generators. For the explicit definition, see \cite{MVHD}. When we only consider the crystal structure of $B(\infty)$ associated with the left action of Chevalley generators, we have a $G$-crystal and a perfect basis of $\CN$. By Theorem 7.2.2 in \cite{kashiwara}, as a $G$-crystal, $B(\infty)$ has a unique automorphism given by the identity map $\operatorname{id}$. \par

\subsection{Mirković-Vilonen polytopes}

Given any two sets $B$ and $B'$ parametrized by the same $G$-crystal $B(\infty)$, there is a unique bijection between $B$ and $B'$ which preserves the $G$-crystal structure. The set of (stable) MV polytopes is a set of polytopes which is parametrized by the $G$-crystal $B(\infty)$. An integral convex polytope in $X$ is the convex hull of finitely many points in $\Lambda$. The partial order $\preceq$ for points in $X$ is defined by $x\preceq y$ if and only if $y-x=\sum_{i\in I}a_i\alpha_i$, where $a_i\geq 0$ for all $i\in [n]$. \par

For any reduced expression $s_{i_1}\cdots s_{i_N}$ of $w_0$, we set $\mathbf{i}=(i_1,\cdots , i_N)$ to be the type of this reduced expression. The $\mathbf{i}$-Lusztig datum of a polytope $P$ lying in $X$ is a sequence of integers $\mathbf{a}_{\mathbf{i}}=(a_1,\cdots ,a_N)$ such that $\{v_i\}_{0\leq j\leq N}\subseteq P$, where $v_0=\mathbf{0}$ and $v_j=v_{j-1}+a_j\alpha_{i_j}$ for any $1\leq j\leq N$. 
\begin{theorem}\label{mvpolytope}[Theorem 7.1 in \cite{JOEL}]
    For a fixed $G$, there exists a family of integral convex polytopes in $X$ $\MV=\{P_i\}_{i\in B(\infty)}$ parametrized by the $G$-crystal $B(\infty)$ satisfying the following conditions:
    \begin{enumerate}
        \item Any $P\in \MV$ has a maximal vertex $\mu$ and a minimal vertex $\textbf{0}$ such that $\textbf{0} \preceq x \preceq \mu$ for any $x\in P$. The weight $\mu$ is called the weight of $P$. 
        \item Any sequence $(a_1,\cdots a_N)\in \mathbf{N}^N$ is the $\mathbf{i}$-Lusztig datum of a unique polytope $P\in \MV$ with weight $\sum_{1\leq j\leq N}a_j\alpha_{i_j}$.
        \item For any $j\in I$, $P\in\MV$ and a type $\mathbf{i}'=(i'_1,\cdots , i'_N)$ of a reduced expression of $w_0$ such that $i'_N=j$, $f_{j}(P)=0$ if the $\mathbf{i}'$-Lusztig datum $(a_1,\cdots ,a_N)$ of $P$ satisfies $a_N=0$. Otherwise, $f_{j}(P)$ is the unique MV polytope with $\mathbf{i}'$-Lusztig datum $(a_1,\cdots ,a_N-1)$.
    \end{enumerate}
\end{theorem}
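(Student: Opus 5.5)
This statement is Kamnitzer's theorem, quoted from \cite{JOEL}; the plan is to reconstruct his argument rather than derive it from anything earlier in this paper. The strategy is to build MV polytopes as pseudo-Weyl polytopes whose vertices are governed by tropical Plücker relations, and then transport the crystal structure from Lusztig's parametrization of $B(\infty)$.

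First I will describe a polytope $P$ by its Berenstein--Zelevinsky datum. This is a collection of integers $(M_\gamma)$, indexed by chamber weights $\gamma=w\omega_i$, such that
\begin{align}
P=\{x\in X \mid \langle x, w\omega_i\rangle\geq M_{w\omega_i}\ \text{for all } w,i\}. \nonumber
\end{align}
I will call $P$ an MV polytope if $(M_\gamma)$ satisfies three conditions:
\begin{enumerate}
\item the edge inequalities;
\item the normalisation $M_{\omega_i}=0$;
\item the tropical Plücker relations at every pair $w, i,j$ with $ws_i>w$, $ws_j>w$ (the type $A_n$ relations involving $ws_i\omega_i$, $ws_j\omega_j$, $w\omega_i$, $w\omega_j$ and the two mixed terms).
\end{enumerate}
Condition (1) of the statement then follows. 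The vertices of $P$ are the points $\mu_w$ cut out by the chamber weights $w\omega_i$, and these are monotone along the Bruhat order in the sense of $\preceq$. The minimal vertex $\mu_e=\textbf{0}$ comes from the normalisation, and the maximal vertex is $\mu_{w_0}$.

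Next I relate $\mathbf{i}$-Lusztig data to these vertices. For a reduced word $\mathbf{i}$, the path $v_0,\dots,v_N$ runs along the vertices $\mu_{s_{i_1}\cdots s_{i_j}}$, with $a_j$ equal to the edge length. Changing $\mathbf{i}$ by a braid move $(i,i+1,i)\leftrightarrow(i+1,i,i+1)$ transforms the Lusztig data by Lusztig's piecewise-linear map. The key point is that this map is exactly the tropical Plücker relation on the 2-face spanned by $\alpha_i,\alpha_{i+1}$. Given any $(a_1,\dots,a_N)\in\mathbb{N}^N$, I will generate the Lusztig data for every reduced word through the transition maps. These are well defined because the transition maps satisfy the braid-graph coherence that Lusztig established for $B(\infty)$. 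From these data I reconstruct all vertices $\mu_w$ and hence all $M_\gamma$. I then check that the tropical Plücker relations hold, by reducing to rank-2 faces, and that the edge inequalities hold, which amounts to nonnegativity of the transformed data. Uniqueness is clear, since the vertices determine the polytope. This proves (2).

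For (3) I will define $f_j$ through any reduced word ending in $j$, lowering the last Lusztig datum $a_N$ by one. Independence of the chosen word follows because Lusztig's transition maps commute with modifying the last coordinate when the final letter is fixed. This is the known compatibility of Lusztig data with Kashiwara's crystal operators on the opposite end, so it identifies the resulting structure with $B(\infty)$. The main obstacle is the verification in (2) that the BZ datum built from Lusztig data satisfies all tropical Plücker relations simultaneously. I would attack it by using that every 2-face of $P$ arises in some reduced word as a consecutive braid pattern, which reduces the global check to the rank-2 ($A_2$ and $A_1\times A_1$) cases, where it can be computed directly.
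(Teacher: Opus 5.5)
The paper gives no proof of this statement; it quotes Kamnitzer. Your outline is a sound reconstruction of Kamnitzer's theorem, but not of the statement as transcribed here, and you change definitions without saying so.

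\textbf{The definitional mismatch.} The statement defines the $\mathbf i$-Lusztig datum by $v_j=v_{j-1}+a_j\alpha_{i_j}$ and asserts weight $\sum_j a_j\alpha_{i_j}$. You instead take $v_j=\mu_{s_{i_1}\cdots s_{i_j}}$. Then $v_j-v_{j-1}=a_j\beta_j$ with $\beta_j=s_{i_1}\cdots s_{i_{j-1}}\alpha_{i_j}$, so your construction yields weight $\sum_j a_j\beta_j$, and ``this proves (2)'' is not accurate.

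With the literal definition, (2) actually fails for MV polytopes. In type $A_2$, take $\mathbf i=(1,2,1)$ and datum $(1,1,1)$. The points $0,\alpha_1,\alpha_1+\alpha_2,2\alpha_1+\alpha_2$ lie in both MV polytopes of weight $2\alpha_1+\alpha_2$, namely $\Conv(0,\alpha_1,2\alpha_1+\alpha_2,\alpha_2)$ and $\Conv(0,2\alpha_1,2\alpha_1+\alpha_2,\alpha_1+\alpha_2)$, so uniqueness fails.

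The same mismatch affects (3). For a reduced word ending in $j$, the last edge is parallel to $\beta_N=w_0s_j\alpha_j=\alpha_{n+1-j}$. So your operator ``lower $a_N$ by one'' changes the weight by $\alpha_{n+1-j}$, which violates the crystal axiom for colour $j$ unless $2j=n+1$. You need words ending in $n+1-j$ (or a relabelling). The identification with $B(\infty)$ must also pass through Kashiwara's involution: the standard compatibility (Lusztig, Saito) is between the first entry of the Lusztig datum and the operator of colour $i_1$, while the last entry carries the $*$-twisted structure.

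\textbf{Comparison with the cited proof.} With these corrections your argument works and is close to Kamnitzer's. The one genuine difference is the existence step. In Kamnitzer's treatment, the unique MV polytope with prescribed $\mathbf i$-Lusztig datum comes from Berenstein--Zelevinsky's result that solutions of the tropical Pl\"ucker relations are freely determined by their values on the chamber weights of one reduced word. The hexagon computation then shows the transition maps are Lusztig's. You instead take Lusztig's coherence of the transition maps as input and build $(\mu_w)$ directly. That is legitimate, for the following reasons.
\begin{itemize}
\item $\mu_w$ is well defined. Any two reduced words for $w_0$ whose prefixes are reduced words for $w$ are connected by braid moves inside the prefix and inside the suffix (Matsumoto for $w$ and for $w^{-1}w_0$), and rank-two moves preserve the weight of their block.
\item The tropical Pl\"ucker relations then hold automatically, since every hexagon $(w,i,j)$ occurs as a braid move in some reduced word. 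So this is not the ``main obstacle''.
\end{itemize}
What you must state is the GGMS lemma you rely on: if $\mu_{ws_i}-\mu_w\in\mathbb N\,w\alpha_i$ whenever $ws_i>w$, then
\begin{itemize}
\item $\Conv\{\mu_w\}$ has the $\mu_w$ as its vertices;
\item it is cut out by the inequalities $\langle x,w\omega_i\rangle\ge\langle\mu_w,w\omega_i\rangle$;
\item $M_{w\omega_i}$ does not depend on the choice of $w$ representing the chamber weight.
\end{itemize}

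\textbf{A minor error.} The vertices are monotone for the weak order, not the Bruhat order. In the first polytope above, $s_1<s_2s_1$ but $\mu_{s_2s_1}-\mu_{s_1}=\alpha_2-\alpha_1$. In any case, (1) follows directly from the inequalities at $w=e$ and $w=w_0$.
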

We can also calculate all MV polytopes using Theorem 3.1 in \cite{JOEL}. In this paper, we consider polytopes up to translation: two polytopes are regarded as equivalent if one can be obtained from the other by a translation. \par

For two polytopes $P_1$ and $P_2$ in $X$, the \textbf{Minkowski sum} of $P_1$ and $P_2$ is defined as $P_1\boxplus P_2:=\{x+y\mid x\in P_1, y\in P_2\}$. We say that an MV polytope is prime if it is not the Minkowski sum of two smaller MV polytopes. The following theorem given by Kamnitzer provides a finite generating set of MV polytopes under Minkowski sum.

\begin{theorem}\label{summv}[Theorem 6.2 in \cite{JOEL}]
    For a fixed $G$, there exist finitely many prime MV polytopes in $\MV$. Any MV polytope is the Minkowski sum of finitely many prime MV polytopes.
\end{theorem}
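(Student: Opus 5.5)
The plan is to work in Kamnitzer's description of MV polytopes by Lusztig data and tropical Plücker relations, as in Theorem 3.1 of \cite{JOEL}. Recall that an MV polytope $P$ is determined by its BZ (hyperplane) datum $(M_\gamma)$, where $\gamma$ runs over the chamber weights $w\omega_i$. Write $P=\{x\in X \mid \langle x, w\omega_i^\vee\rangle \geq M_{w\omega_i}\ \text{for all } w,i\}$. Such a datum comes from an MV polytope exactly when it satisfies the edge inequalities and the tropical Plücker relations, with normalisation $M_{\omega_i}=0$.

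The first step is to show that Minkowski sum adds BZ data. For any normal direction $\gamma$ we have
\[
\min_{x\in P_1\boxplus P_2}\langle x,\gamma\rangle=\min_{P_1}\langle\cdot,\gamma\rangle+\min_{P_2}\langle\cdot,\gamma\rangle .
\]
Hence $P_1\boxplus P_2$ is cut out by $M^1+M^2$ in the chamber directions, provided the normal fan of the sum is still refined by the Weyl fan. This holds because both summands have all edges parallel to roots.

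The second step settles the finiteness claim. Call a datum \emph{additive-closed} if $M_{\gamma}+M_{\gamma'}$ is replaced consistently under sums. The set $\mathcal{C}$ of BZ data satisfying the edge inequalities and the normalisation is a rational polyhedral cone. The tropical Plücker relations break this cone into finitely many rational subcones $\mathcal{C}_\sigma$. On each $\mathcal{C}_\sigma$ one fixed term attains the minimum in every relation, so every relation becomes a linear equality there. Each $\mathcal{C}_\sigma$ is then a rational polyhedral cone, and its lattice points form a finitely generated monoid by Gordan's lemma. Within a single $\mathcal{C}_\sigma$ the relations are linear, so data there are closed under sums. By the first step, sums of data in $\mathcal{C}_\sigma$ correspond to Minkowski sums of the corresponding MV polytopes.

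The third step collects the generators. Take the union over $\sigma$ of the Hilbert bases of the monoids $\mathcal{C}_\sigma\cap\mathbb{Z}^{\mathrm{chamber}}$; this gives a finite set $S$ of MV polytopes. Any MV polytope lies in some $\mathcal{C}_\sigma$, so it is a Minkowski sum of elements of $S$. A prime MV polytope lies in some $\mathcal{C}_\sigma$ and so is such a sum. By primality that sum can have only one summand, so the prime polytope lies in $S$. Hence there are only finitely many primes. Conversely, by induction on weight, which strictly drops for nontrivial summands, every MV polytope is a Minkowski sum of primes.

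The main obstacle is showing that each region $\mathcal{C}_\sigma$ is genuinely a cone on which all relations are linear. Each tropical Plücker relation says that a minimum of two terms equals a third term, and the choice of which term attains the minimum must be made uniformly across all relations. I would handle this by taking $\sigma$ to range over all choices of minimising term in each relation. This gives finitely many closed cones defined by linear equalities and inequalities, which cover the tropical variety. Every datum in such a cone satisfies all the relations, since a sum of data with the same minimisers keeps those minimisers. A lesser point, compatibility of the fans in the first step, follows because MV polytopes are pseudo-Weyl polytopes, whose normal fans are coarsenings of the Weyl fan.
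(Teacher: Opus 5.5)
Your proposal is correct. It follows essentially the argument behind the cited result: the paper gives no proof of its own and simply quotes Kamnitzer's Theorem 6.2 \cite{JOEL}, which rests on the same ingredients you use. These are that BZ data add under Minkowski sum, and that for each fixed BZ-choice (Section 6.1 of \cite{JOEL}) the MV data are the lattice points of a rational polyhedral cone, hence a finitely generated monoid closed under Minkowski sum; your Hilbert-basis argument for finiteness of the primes and your induction on weight for the decomposition complete it.
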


\subsection{Subword complexes}
Given an arbitrary Coxeter group $W$ of rank $n$ with generators $\{s_i\}_{i\in [n]}$, a word $Q=s_{i_1}\cdots s_{i_m}$ and an element $\pi\in W$, the subword complex $\SC_W(Q,\pi)$ is the simplicial complex whose faces are subsets $I$ of $[m]$ such that the subword of $Q$ with positions at $[m]\backslash I$ contains a reduced expression of $\pi$.  

We consider the set of all quadruples $\tilde{\mathcal{C}}=\{(W,Q,\pi ,I)\}$, where $W$ is an arbitrary Coxeter group and $I$ is a maximal face of $\SC_W (Q,\pi)$. We denote the length of the word $Q$ by $l(Q)$. The maximal face $I$ can be viewed as a subset of $[l(Q)]=\{1,\cdots,l(Q)\}$. For any quadruple $X$ in $\tilde{\mathcal{C}}$, we usually use the subscript $X$ to denote the component corresponding to $X$ in the quadruple. In other words, $X=(W_x,Q_X,\pi _X,I_X)$. The vector space spanned by the roots of $W_X$ is denoted by $V_X$. The length of the word $Q_X$ is denoted by $n_X$. The root system associated with $X$ is denoted by $\Phi_X$. \par

\begin{definition}\label{coxetercomplex}
Let $(W,S)$ be a Coxeter system with $S=\{s_1,\dots,s_n\}$.  
The \textbf{Coxeter complex} $\Sigma(W)$ is the simplicial complex whose simplices are the right cosets $wW_J$ for subsets $J\subseteq S$,
ordered by reverse inclusion.  
A maximal simplex (a \textbf{chamber}) has the form $wW_{\emptyset}=\{w\}$, so chambers are naturally in bijection with elements in $W$.
\end{definition}

\begin{definition}\label{subcomplex}
For a subset $J\subseteq S$, the parabolic subgroup $W_J$ is the subgroup of $W$ generated by $\{s_i\}_{i\in J}$. The \textbf{sub-Coxeter complex associated to $J$} is the subcomplex
\[
  \Sigma_J(W)
  := \{\, w W_I \;\mid\; I\subseteq J \,\}
  \subseteq \Sigma(W).
\]
It is canonically isomorphic to the Coxeter complex $\Sigma(W_J)$.
\end{definition}

Let $V$ be the real vector space spanned by the simple roots 
$\{\alpha_s \mid s\in S\}$, equipped with the pairing 
$\langle\cdot,\cdot\rangle$ such that $\langle\alpha_i,\alpha_j^{\vee}\rangle=-cos(\frac{\pi}{\operatorname{ord}(s_is_j)})$.  In the crystallographic case, we use the standard Cartan pairing.\par

A chamber of the Coxeter complex $\Sigma(W)$ may be identified with a Weyl chamber
\[
  C(w) := \{\, x\in V \mid \langle x, w(\alpha_s^{\vee})\rangle > 0 
      \text{ for all } s\in S \,\}.
\]

If $C(w)$ and $C(ws)$ are adjacent chambers, they are separated by the wall
\[
  H_{w\alpha_s} := 
  \{\, x\in V \mid \langle x, w(\alpha_s^{\vee})\rangle = 0 \,\}.
\]

The \textbf{type} associated to this adjacency from $C(w)$ to $C(ws)$ is
\[
  \alpha(C(w),C(ws)) := w(\alpha_s),
\]
the normal vector orthogonal to the separating wall $H_{w\alpha_s}$. 

\begin{definition}\label{def:gallery}
A \textbf{gallery} (starting at the fundamental chamber $id$) in $\Sigma(W)$ is a finite sequence of chambers
\[
g=(C_0,C_1,\dots,C_k)
\]
such that $C_{i-1}$ and $C_i$ are adjacent for all $i$ and $C_0=id$. If we identify a chamber with the corresponding Weyl chamber, we also denote the gallery $g$ by the sequence of types
\[[\alpha(C_0,C_1),\cdots \alpha(C_{r-1},C_r)].\]
\end{definition}

\begin{definition}\label{def:foldedgallery}
A \textbf{folded gallery} (starting at the fundamental chamber $id$) is a sequence of chambers
\[
g=(C_0,C_1,\dots,C_k)
\]
such that for each $j$ one has either:
\begin{enumerate}
    \item $C_j \neq C_{j+1}$ and $C_{j+1},C_j$ are adjacent of type $\alpha(C_{j},C_{j+1})$ (\textbf{traversing position}); or
    \item $C_j = C_{j+1}$ with a choice of type of adjacency $\alpha(C_{j},C'_j)$, where $C'_j$ is a certain chamber adjacent to $C_j$ (\textbf{reflection position}). By a slight abuse of notation, we also denote this type by $\alpha(C_{j},C_{j+1})$.
\end{enumerate}
We set $I:=\{ i\mid 1\leq i \leq k, C_i=C_{i-1}\}$. If we identify a chamber with the corresponding Weyl chamber, we also denote the folded gallery $g$ by the sequence of types \[
[(-1)^{\delta_I(1)}\alpha(C_0,C_1),\cdots ,(-1)^{\delta_I(r)}\alpha(C_{r-1},C_r),I].\]
Here $\delta_I$ is the indicator function of $I$. We use $\delta_I$ to distinguish the traversing positions ($\delta_I(i)=0$) and the reflection positions ($\delta_I(i)=1$).
\end{definition}

This definition is well suited for our purpose and proof in Section~\ref{sec: comulti}. In this paper, we always use the sequence of types to represent a (folded) gallery. 

The geometric projection of a folded gallery $g$ in $\Sigma(W)$ to a sub-Coxeter complex $\Sigma(W_J)$ is obtained by extracting all roots in the expression of $g$ contained in the subroot system $\Phi_J$.

\section{One-skeleton path model}\label{sec:osp}
In this section, we first define one-skeleton paths and crystal operators. Then we introduce the polytopal map, which sends an arbitrary one-skeleton path to an integral polytope. We show that the polytopal map sends the concatenation of one-skeleton paths to the Minkowski sum of polytopes.

\subsection{One-skeleton paths}\label{subsction: one-skeleton paths}
The weight space $X$ has a partition into alcoves under the action of the affine Weyl group. The fundamental alcove is the set $\Delta=\operatorname{Conv}(0,\omega_1,\ldots,\omega_n)
=\left\{\sum_{i=1}^n a_i\omega_i \ \middle|\ a_i\ge 0,\ \sum_{i=1}^n a_i\le 1\right\}$. The Weyl group $W$ is isomorphic to the symmetric group $S_{n+1}$. The affine Weyl group $W^a=W \ltimes \Lambda$ acts transitively on the set of alcoves in $X$. An alcove is the set $g\cdot \Delta$ for some $g\in W^a$. An edge of an alcove is just one of its one-dimensional faces. We use the edges of alcoves to define one-skeleton paths.
\begin{definition}\label{def: one-skeleton path}
    A \textbf{one-skeleton path} (of type $A_n$) in $X$ is a piecewise linear function $p$ from $[0,1]$ to $X$ such that the image of $p$ is entirely contained in the union of edges of alcoves in $X$, $p(0)=0$ and $p(1)\in \Lambda$. A \textbf{positive fundamental one-skeleton path} in $X$ is a linear function from $[0,1]$ to $X$ associated with a fundamental weight $\omega_i$: \begin{align}
        p_i:t\to t \cdot \omega_i. \nonumber
    \end{align}
\end{definition}
We also use $p$ or $p_i$ to represent its image in $X$. 
\begin{definition}\label{def: concatenation of paths}   
For two one-skeleton paths $p_1$ and $p_2$ from $[0,1]$ to $X$, the concatenation $p_1 *p_2$ is the one-skeleton path
\[
p_1*p_2(t) =
\begin{cases}
p_1(2t) & t\in [0,\frac{1}{2}] \\
p_1(1)+p_2(2t-1)  & t\in [\frac{1}{2},1].
\end{cases}
\]
\end{definition}

\begin{remark}\label{rem:equivalent paths}
    We always view two one-skeleton paths as the same if they just differ by an increasing piecewise linear bijection from $[0,1]$ to $[0,1]$. Precisely,  we also use $p$ to denote the equivalence class of one-skeleton paths
    \[\{f \mid f=p\circ \tau\text{, where } \tau \text{ is an increasing piecewise linear bijection from } [0,1] \text{ to }  [0,1]\}.\]
    Up to this equivalence, the concatenation operation is associative.
\end{remark}

We denote by $OP$ the set of all one-skeleton paths of type $A_n$. \par

Recall that any simple root $\alpha_i$ gives a reflection $s_i$ on $X$ defined by \[
s_i(v)=v-\langle v,\alpha _i ^{\vee}\rangle \cdot \alpha _i,
\]
 where $\langle \ , \ \rangle$ is the canonical pairing defined in~\ref{subsection 2.1}. We define fundamental one-skeleton paths and the crystal operators on fundamental one-skeleton paths inductively.
\begin{definition}\label{def: crystal operator on fundamental}
   We let 0 denote a null path without an endpoint. We let $P_{f,0}$ denote the set of positive fundamental one-skeleton paths. For any $j\geq 0$, we define a set $P_{f,j}$ and crystal operators $f_i$  from $P_{f,j}$ to $P_{f,j}\cup \{0\}$ for any $i\in [n]$ inductively:
    \begin{align}
        f_i(p) =
\begin{cases}
s_i(p) & \textit{if } \langle p(t) ,\alpha_i ^{\vee} \rangle >0, \forall t\in (0,1]. \\
0  & else,
\end{cases} \nonumber
    \end{align}
    and $P_{f,j+1}:=\{f_i(p)\mid p\in P_{f,j}, i\in [n], f_i(p)\neq 0\}$. Let $P_f=\bigcup _{j\geq 0}P_{f,j}$. Any one-skeleton path in $P_f$ is called a \textbf{fundamental one-skeleton path} of type $A_n$.
\end{definition}
Given $p_1$ and $p_2$ in $P_f$ such that $f_i(p_1)=f_i(p_2)\neq 0$ for some $i\in [n]$, we have $p_1=s_i^2(p_1)=s_i^2(p_2)=p_2$. Thus the operator $f_i$ is injective on set $f^{-1}_i(P_f)$. We define the crystal operator $e_i$ as the inverse of $f_i$:
\begin{definition}\label{def: inverse fundamental operator}
    For any $i\in [n]$, the crystal operator $e_i$ on $P_f$ is defined by the following rule:
    \begin{align}
        e_i(p) =
\begin{cases}
s_i(p) & \textit{if } p \in P_f  \ \textit{and}  \ p=f_i(q) \  \textit{for some} \ q\in P_f ,\\
0  & else.
\end{cases} \nonumber
    \end{align}
\end{definition}
The operator $e_i$ is well defined since the operator $f_i$ is injective on set $f^{-1}_i(P_f)$.\par
We let $\W$ denote the monoid of words consisting of $f_i$. For any $w=f_{i_1}\cdots f_{i_m}\in \W$ and $p\in P_f$, the element $f_{i_1}\circ \cdots \circ f_{i_m}(p)$ is denoted by $w(p)=w\cdot p$ in $P_f$.

\begin{definition}\label{def: j-chain}
The \textbf{j-chain} of $G$ (of type $A_n$) is an $n$-colored quiver $Q_j$ with vertex set $V_j=\{ w\cdot \omega_j\}_{w\in \mathcal{W},w\cdot \omega_j\neq 0}$, for any $j\in [n]$. For two vertices $v_1$ and $v_2$ in $V_j$, there is an $i$-arrow from $v_1$ to $v_2$ if and only if $v_2=f_i(v_1)$. Notice that $P_f=(\bigsqcup _{j\in [n]}V_j)\sqcup \{0\}$. 
\end{definition}
For any one-skeleton path $p$, the weight of $p$ is defined by the endpoint of $p$.\[
\wt(p):=p(1)\in P.
\] The weight of the null path $0$ is defined by $\wt(0)=-\infty$. The following proposition indicates that $V_j$ has a $G$-crystal structure for any $j\in [n]$. 
\begin{proposition}\label{prop: crystal structure fpr fundamental paths}
For two vertices $v$ and $u$ in $V_j$ such that $u=f_i(v)$ for some $i\in [n]$, we have \begin{align}
    u=v-\alpha_i. \nonumber
\end{align} 
\end{proposition}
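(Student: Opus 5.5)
The plan is to reduce the statement to a fact about weights: every fundamental one-skeleton path is a straight segment from $0$ to a point of the Weyl orbit $W\cdot\omega_j$, so it is determined by its endpoint. The equality $u=v-\alpha_i$ is then read as $\wt(u)=\wt(v)-\alpha_i$. The key input is that fundamental weights in type $A_n$ are minuscule.

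First, I would show by induction on $m$ that every element of $P_{f,m}$ lying in $V_j$ has the form $t\mapsto t\cdot x$ with $x\in W\cdot\omega_j$.
\begin{enumerate}
\item For $m=0$ this holds by definition, since $p_j(t)=t\cdot\omega_j$.
\item Suppose $p(t)=t\cdot x$ with $x\in W\cdot\omega_j$, and suppose $f_i(p)\neq 0$. Since $s_i$ is linear, $f_i(p)(t)=s_i(t\cdot x)=t\cdot s_i(x)$.
\item Moreover $s_i(x)\in W\cdot\omega_j$, which closes the induction.
\end{enumerate}
Along the way, note that for such a linear path the condition $\langle p(t),\alpha_i^\vee\rangle>0$ for all $t\in(0,1]$ is equivalent to $\langle x,\alpha_i^\vee\rangle>0$.

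Second, I would show that $\langle x,\alpha_i^\vee\rangle\in\{-1,0,1\}$ for every $x\in W\cdot\omega_j$.
\begin{enumerate}
\item Write $x=w\omega_j$. By $W$-invariance of the pairing, $\langle w\omega_j,\alpha_i^\vee\rangle=\langle\omega_j,w^{-1}\alpha_i^\vee\rangle$.
\item The element $\beta^\vee:=w^{-1}\alpha_i^\vee$ is a coroot.
\item In type $A_n$ every positive coroot has the form $\alpha_k^\vee+\alpha_{k+1}^\vee+\cdots+\alpha_l^\vee$, so its coefficients in the simple coroots are $0$ or $1$. Hence $\langle\omega_j,\beta^\vee\rangle$, which is the coefficient of $\alpha_j^\vee$ in $\beta^\vee$, lies in $\{-1,0,1\}$.
\end{enumerate}

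Finally, I would combine the two steps. If $u=f_i(v)\neq 0$ with $v(t)=t\cdot x$, then $\langle x,\alpha_i^\vee\rangle>0$, so it equals $1$. Therefore
\[s_i(x)=x-\langle x,\alpha_i^\vee\rangle\alpha_i=x-\alpha_i.\]
This gives $u(t)=t\cdot(x-\alpha_i)$, and in particular $\wt(u)=\wt(v)-\alpha_i$.

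The only point needing care is the identification of a fundamental path with its endpoint, that is, the linearity established in the first step. The minuscule property in the second step is the genuine content, and it is where type $A_n$ is used. I would verify it either via the explicit list of positive roots, or by noting that $W\cdot\omega_j$ is the set of $0/1$ vectors with $j$ ones in the $\epsilon$-coordinates, with $\alpha_i^\vee=\epsilon_i-\epsilon_{i+1}$.
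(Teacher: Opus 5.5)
Your proof is correct and follows essentially the same route as the paper: both place $v$ in the orbit $W\cdot\omega_j$ and use that $\langle v,\alpha_i^\vee\rangle\in\{-1,0,1\}$ there, so positivity forces the pairing to be $1$ and $s_i(v)=v-\alpha_i$. The paper gets the bound from the explicit description $W\cdot\omega_j=\{\sum_{s\in S}(\omega_s-\omega_{s-1})\}$, while you get it from coroot coefficients; your induction showing that fundamental paths are straight segments $t\mapsto t\cdot x$ with $x\in W\cdot\omega_j$ also makes precise the identification of paths with their endpoints, which the paper uses implicitly.
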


\begin{proof}
    By Definition~\ref{def: crystal operator on fundamental}, the action of any operator $f_i$ on any fundamental one-skeleton path $p$ is the simple reflection $s_i$ on $p$ if $f_i(p)\neq 0$. For any $j\in [n]$, the vertex set $V_j$ is just the orbit of $\omega _j$ under the action of the Weyl group $W$. Using the standard geometric realization of the root system of type $A_n$, we have 
    \begin{align}\label{eq: explicit formula for A_n crystal action}
        V_j=W\cdot \omega_j=\{ \Sigma_{s\in S}(\omega_s-\omega_{s-1}) \mid S\subseteq \{1,\cdots,n+1\}, |S|=j\}, 
    \end{align}
    where we set $\omega_0=\omega_{n+1}=0$.\par
    As a result, for $u=f_i(v)$ in $V_j$, the coefficient of $w_i$ in $v$ has to be 1. We have \begin{align}
        u=s_i(v)= v-\langle v,\alpha _i ^{\vee}\rangle \cdot \alpha _i=v-\langle \omega_i,\alpha _i ^{\vee}\rangle \cdot \alpha _i=v-\alpha_i. \nonumber
    \end{align}
\end{proof}
  By Equation~\eqref{eq: explicit formula for A_n crystal action}, any fundamental one-skeleton path $p$ is of the form $p=\Sigma_{i\in [n]}a_{pi}\omega_i$, where $a_{pi}\in\{-1,0,1\}$. For any $i\in [n]$, there is an $i$-colored arrow starting from $p$ if and only if $a_{pi}=1$. Symmetrically, there is an $i$-colored arrow ending in $v$ if and only if $a_{pi}=-1$. \par
  We now define a $G$-crystal structure on $V_j$. For any $v\in V_j$, we set $\varepsilon_i(v)=max\{k\in \mathbb{N} \mid e_i^{k}(v)\neq 0\}$ and $\varphi_i(v)=max\{k\in \mathbb{N} \mid f_i^{k}(v)\neq 0\}$. Then Proposition 2.1 induces the following corollary:
\begin{corollary}\label{coro: G-crystal}
    $(V_j,\wt, e_i, f_i, \varepsilon _i,\varphi _i)$ is a finite $G$-crystal for any $j\in [n]$.
\end{corollary}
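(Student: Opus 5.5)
We verify the four axioms of Definition~\ref{def:crystal} directly, using the explicit description of $V_j$ in Equation~\eqref{eq: explicit formula for A_n crystal action}. Write each $v\in V_j$ as $v=\sum_{i\in[n]}a_{vi}\omega_i$ with $a_{vi}\in\{-1,0,1\}$. First we check that the operators $f_i$ and $e_i$ really preserve $V_j$. By Definition~\ref{def: crystal operator on fundamental}, $f_i(v)=s_i(v)$ whenever it is nonzero. Since $s_i$ preserves the orbit $W\cdot\omega_j$, the image lies in $V_j$. Likewise $e_i(v)$ is either $0$ or the element $q\in V_j$ with $f_i(q)=v$. Finiteness of $V_j$ is immediate, because it is contained in $W\cdot\omega_j$. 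We also need $0\notin V_j$; this holds because the null path is excluded by construction.

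Next we make $\varepsilon_i$ and $\varphi_i$ explicit. The path $p_v:t\mapsto tv$ satisfies $\langle p_v(t),\alpha_i^\vee\rangle=t\,a_{vi}$. Hence $f_i(v)\neq0$ exactly when $a_{vi}=1$, as noted after Proposition~\ref{prop: crystal structure fpr fundamental paths}. In that case $f_i(v)=v-\alpha_i$, and $\langle v-\alpha_i,\alpha_i^\vee\rangle=1-2=-1$, so $f_i^2(v)=0$. Dually, $e_i(v)\neq0$ exactly when $a_{vi}=-1$, in which case $e_i(v)=v+\alpha_i=s_i(v)$.

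The converse here is the step needing care, and it is the main obstacle. We must show that if $a_{vi}=-1$ then $s_i(v)$ actually lies in $V_j$ as a path obtained by a chain of $f$'s from $\omega_j$. In other words, every element of the orbit $W\cdot\omega_j$ must be reached from $\omega_j$ by applying admissible $f_i$'s. We would prove this by induction on the dominance order: $\omega_j$ is the unique dominant element of the orbit, and any $v\neq\omega_j$ in the orbit has some $a_{vi}=-1$. Then $s_i(v)=v+\alpha_i$ is strictly higher in the orbit and satisfies $f_i(s_i(v))=v$. Since the orbit is finite, the induction terminates at $\omega_j$, so every $v\in W\cdot\omega_j$ lies in $V_j$ (the reverse inclusion was already used in Proposition~\ref{prop: crystal structure fpr fundamental paths}). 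Consequently $\varepsilon_i(v)=\max(-a_{vi},0)$ and $\varphi_i(v)=\max(a_{vi},0)$.

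The axioms then follow directly from these formulas.
\begin{enumerate}
\item Axiom (1) holds because $\varphi_i(v)-\varepsilon_i(v)=a_{vi}=\langle\alpha_i^\vee,\wt(v)\rangle$.
\item Axiom (2) holds because $e_i$ is by definition the inverse of $f_i$ on its image, which is well defined since $f_i$ is injective (shown after Definition~\ref{def: crystal operator on fundamental}).
\item For axiom (3), Proposition~\ref{prop: crystal structure fpr fundamental paths} gives $\wt(e_iv)=v+\alpha_i$. Moreover, $e_i$ changes $a_{vi}$ from $-1$ to $1$, so $\varepsilon_i$ drops from $1$ to $0$ and $\varphi_i$ rises from $0$ to $1$.
\item Axiom (4) is vacuous, since $\varepsilon_i$ and $\varphi_i$ are never $-\infty$.
\end{enumerate}
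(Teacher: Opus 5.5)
Your proof is correct and follows the same route as the paper, which deduces the corollary directly from Proposition~\ref{prop: crystal structure fpr fundamental paths} together with the observation that an $i$-arrow starts (resp.\ ends) at $v$ exactly when $a_{vi}=1$ (resp.\ $a_{vi}=-1$). What you add is an actual proof, by ascending through the orbit via $s_i$ whenever some $a_{vi}=-1$, that $V_j$ is all of $W\cdot\omega_j$. The paper only asserts this, yet it is exactly what is needed for the direction ``$a_{vi}=-1\Rightarrow e_i(v)\neq 0$'' and hence for axiom (1).
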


Our next goal is to show that any one-skeleton path is a concatenation of fundamental one-skeleton paths. We consider the action of the Weyl group $W$ on $X$. The fundamental alcove $\Delta$ is the fundamental domain of this action. In this paper, we choose the standard orientation of 1-dimensional faces of $\Delta$:  \begin{align}
    E=\{\omega_i,\omega_j-\omega_k\}_{1\leq i\leq n,1\leq j<k\leq n}. \nonumber
\end{align}
There are $\frac{(n+1)n}{2}$ edges of $\Delta$. We have the following corollary of Proposition~\ref{prop: crystal structure fpr fundamental paths}:

\begin{proposition}\label{prop:inclusion of edges}
    We have the inclusion $E\subseteq P_f$.
\end{proposition}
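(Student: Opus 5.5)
Each element of $E$ will be identified with the straight segment from $0$ to that weight; I will then show that this segment lies in $P_f$. The paths $\omega_i$ are the positive fundamental paths $p_i\in P_{f,0}$, so they lie in $P_f$ by definition. The remaining work is to show that, for $1\le j<k\le n$, the linear path $t\mapsto t(\omega_j-\omega_k)$ arises from some $p_m$ by a sequence of operators $f_i$ that are all nonzero. Since every $f_i$ acts by the linear reflection $s_i$ when it is nonzero, images of linear paths remain linear paths from $0$. The question is therefore reduced to the endpoint weights, together with the nonvanishing condition $\langle p(t),\alpha_i^\vee\rangle>0$ for $t\in(0,1]$. For a linear path this condition says exactly that the coefficient of $\omega_i$ in the endpoint equals $+1$.

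I start from $p_k$, i.e.\ the weight $\omega_k$, and apply $f_k, f_{k-1},\dots, f_{j+1}$ in that order. The coefficient of $\omega_k$ in $\omega_k$ is $1$, so $f_k$ is nonzero, and Proposition~\ref{prop: crystal structure fpr fundamental paths} gives the new endpoint $\omega_k-\alpha_k=\omega_{k-1}-\omega_k+\omega_{k+1}$, with the conventions $\omega_0=\omega_{n+1}=0$. By induction, after applying $f_k,\dots,f_{m}$ (for $m\ge j+1$) the endpoint is
\begin{align}
\omega_{m-1}-\omega_k+\omega_{k+1}, \nonumber
\end{align}
using $\alpha_i=-\omega_{i-1}+2\omega_i-\omega_{i+1}$. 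At each step the coefficient of $\omega_{m}$ in the current endpoint is $1$, so the next $f_m$ is nonzero. After $f_{j+1}$ the endpoint is $\omega_j-\omega_k+\omega_{k+1}$.

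This leaves an extra $\omega_{k+1}$, so starting from $\omega_k$ is wrong unless $k=n$. I will instead start from $\omega_j$ and move the ``$-1$'' position upward. Applying $f_j$ to $\omega_j$ gives $\omega_{j-1}-\omega_j+\omega_{j+1}$, which also does not match. Consequently I will use the explicit description~\eqref{eq: explicit formula for A_n crystal action}: $V_m$ consists of the sums $\sum_{s\in S}(\omega_s-\omega_{s-1})$ over subsets $S\subseteq\{1,\dots,n+1\}$ with $|S|=m$. Taking $m=k-j$ and $S=\{j+1,\dots,k\}$, the sum telescopes to $\omega_k-\omega_j$, and taking $S=\{1,\dots,j\}\cup\{k+1,\dots,n+1\}$ gives $\omega_j-\omega_k$. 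So $\omega_j-\omega_k\in V_{n+1-k+j}$ as a weight. The proof of Proposition~\ref{prop: crystal structure fpr fundamental paths} identifies $V_m$ with the orbit $W\cdot\omega_m$, reached by nonzero $f_i$-chains from $\omega_m$, and each vertex of $V_m$ is the linear path to the corresponding weight. Hence the segment to $\omega_j-\omega_k$ lies in $P_f$.

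The main obstacle is justifying that every element of the orbit $W\cdot\omega_m$ is reached from $\omega_m$ by a chain of \emph{nonzero} $f_i$'s, since the paper asserts this only implicitly. I will give an explicit chain. In subset language, applying $f_i$ replaces $i\in S$ by $i+1\notin S$, i.e.\ it moves an element of $S$ up by one. Any $m$-subset can be reached from $\{1,\dots,m\}$ by such moves: move the largest element up first, then the next, and so on. Each individual move is legal exactly when the coefficient of $\omega_i$ is $1$, which is the condition for $f_i$ to be nonzero.
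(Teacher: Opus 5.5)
Your final argument is the paper's own: the subset description of $V_m$ with $S=\{1,\dots,j\}\cup\{k+1,\dots,n+1\}$ gives $\omega_j-\omega_k\in V_{n-k+j+1}$. Your subset-moving argument is a welcome addition, since it proves the inclusion $W\cdot\omega_m\subseteq V_m$ (every orbit element is reached by a chain of nonzero $f_i$) that the paper only asserts; separately, delete the abandoned first attempt, whose induction is miscomputed, because after $f_k,\dots,f_m$ the endpoint is $\omega_{m-1}-\omega_m+\omega_{k+1}$, not $\omega_{m-1}-\omega_k+\omega_{k+1}$.
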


\begin{proof}
Since $P_f=\bigcup_{j\in [n]}V_j\cup \{0\}$, we have $\omega_i\in V_i\subset P_f$ for any $i\in [n]$. By Equation~\ref{eq: explicit formula for A_n crystal action}, we have 
\begin{align}\label{eq: w_i-w_k formula}
    \omega_j-\omega_k=\Sigma_{s\in \{1,\cdots ,j\}\cup\{k+1,\cdots , n+1\}}(\omega_s-\omega_{s-1})\in V_{n-k+j+1},
\end{align}
for any $1\leq j<k \leq n$. Thus we have the inclusion 
\begin{align}
    E \subseteq P_f. \nonumber
\end{align}
\end{proof}

\begin{remark}\label{remark minus symmetric}
   We have the observation that $-\omega_i=\Sigma_{s\in \{i+1,\cdots , n+1\}}(\omega_s-\omega_{s-1})\in V_{n-i+1}$ for all $i\in [n]$. Similarly, we have $-E\subset P_f$ using Equation~\eqref{eq: w_i-w_k formula}.
\end{remark}

Now we are ready to obtain all one-skeleton paths from fundamental one-skeleton paths.

\begin{theorem}\label{thm: prime decomposition}
    Any nonzero one-skeleton path $p$ is of the form \begin{align}\label{eq: preime decomposition}
       p=p_1*\cdots * p_m, 
    \end{align}
     where $m\geq 1$ and $p_i\in P_f$ for any $1\leq i\leq m$. Equation~\eqref{eq: preime decomposition} is called the \textbf{prime decomposition} of $p$. Each $p_i$ is called a \textbf{factor} of $p$ and $m$ is the \textbf{length} of $p$.
\end{theorem}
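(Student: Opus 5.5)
The plan is to cut $p$ at the lattice points it passes through, so that each piece runs along a single edge of an alcove. We then show that every such edge, translated back to the origin, is the straight path $t\mapsto t\cdot d$ with $d\in W\cdot(E\cup -E)\subseteq P_f$. Definition~\ref{def: concatenation of paths} translates each new factor so that it starts at the endpoint of the previous one, so a chain of full edges is exactly a concatenation of fundamental one-skeleton paths.

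\emph{Step 1: edges of alcoves.} Every alcove is $g\cdot\Delta$ with $g=(w,\lambda)\in W\ltimes\Lambda$, so it equals $\lambda+w(\Delta)$. The vertices of $\Delta$ are $0,\omega_1,\dots,\omega_n$, which lie in $\Lambda$. Hence every vertex of every alcove lies in $\Lambda$. Every edge has the form $[\lambda+w(a),\lambda+w(b)]$ with $a\ne b$ in $\{0,\omega_1,\dots,\omega_n\}$. Its direction vector, in either orientation, is $\pm w(e)$ for some $e\in E$, where $E=\{\omega_i,\omega_j-\omega_k\}$ is the standard orientation of the edges of $\Delta$.

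\emph{Step 2: edge directions are fundamental.} By Proposition~\ref{prop:inclusion of edges} and Remark~\ref{remark minus symmetric}, we have $E\cup -E\subseteq \bigcup_j V_j$. The proof of Proposition~\ref{prop: crystal structure fpr fundamental paths} identifies each $V_j$ with the orbit $W\cdot\omega_j$, so each $V_j$ is $W$-stable. Therefore $W\cdot(E\cup -E)\subseteq P_f$. Concretely, for every edge with endpoints $u$ and $u+d$, the straight path $t\mapsto t\cdot d$ is a fundamental one-skeleton path.

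\emph{Step 3: decomposition.} Take a nonzero one-skeleton path $p$ and let $0=t_0<t_1<\dots<t_m=1$ be the successive times at which $p$ passes through a vertex, i.e.\ a point of $\Lambda$. Only finitely many such times are relevant, since $p$ is piecewise linear; $p(0)=0$ and $p(1)\in\Lambda$ are vertices. The interiors of distinct edges are disjoint, and two edges meet only at vertices. So on each open interval $(t_{r-1},t_r)$ the path stays in the relative interior of a single edge joining $p(t_{r-1})$ and $p(t_r)$. Set $d_r=p(t_r)-p(t_{r-1})$. If $d_r\neq 0$, then $p|_{[t_{r-1},t_r]}$ traverses this edge, and up to an increasing reparametrization (Remark~\ref{rem:equivalent paths}) it equals $p(t_{r-1})+(t\mapsto t\cdot d_r)$. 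By Step 2 this is a translate of a path in $P_f$. By induction on $m$ and associativity of $*$, we obtain $p=p_1*\cdots*p_m$ with $p_r\colon t\mapsto t\cdot d_r$.

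\emph{Main obstacle.} The delicate point is Step 3 when $p$ turns around inside an edge: it may go partway along an edge and come back to the same vertex, or reverse several times before reaching the other end. Such behaviour is not removed by increasing reparametrizations. I would handle it by reading the definition of a one-skeleton path so that all breakpoints of $p$ lie at vertices, which is the natural intended reading since $p$ lives on the one-skeleton. Under that reading each piece between consecutive vertices is monotone along its edge. With this convention a piece returning to its starting vertex cannot occur, and Steps 1--3 then give the claimed prime decomposition with $m\ge 1$.
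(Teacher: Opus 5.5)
Your proposal is correct and follows the same route as the paper: reduce to showing that every alcove edge, translated to the origin, has direction in $W\cdot E\cup W\cdot(-E)$, then get $W\cdot(E\cup -E)\subseteq P_f$ from Proposition~\ref{prop:inclusion of edges}, Remark~\ref{remark minus symmetric} and the $W$-stability of the orbits $V_j$. Your explicit cutting at lattice points, together with the convention excluding backtracking inside an edge, makes precise what the paper compresses into ``any one-skeleton path is piecewise an edge of an alcove''; that convention is genuinely needed, since a path that goes out to $\tfrac12\omega_1$ and back to $0$ before traversing to $\omega_1$ satisfies the literal definition but is not a concatenation of elements of $P_f$.
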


\begin{proof}
    By Definition~\ref{def: one-skeleton path}, any one-skeleton path is piecewise an edge of an alcove. It suffices to prove that any edge of an alcove is a fundamental one-skeleton path up to translation by $\Lambda$. Since the Weyl group $W$ acts transitively on the set of alcoves containing $0$, and the symmetry between $E$ and $-E$, it suffices to prove $W\cdot E\cup W\cdot(-E)\subset P_f$, where $E=\{\omega_i,\omega_j-\omega_k\}_{1\leq i\leq n,1\leq j<k\leq n}$. \par
    
    Proposition~\ref{prop:inclusion of edges} implies that any $v\in E$ is in $V_j$ for some $j\in [n]$. Since $V_j$ is the orbit $W\cdot u$ for any $u\in V_j$, we have $W\cdot v\in P_f$. Taking the union $W\cdot E=\bigcup_{v\in E} W\cdot v$ , we have $W\cdot E \subset P_f$. Using Remark~\ref{remark minus symmetric}, we have $W\cdot (-E) \subset P_f$. As a result, any one-skeleton path is a concatenation of fundamental one-skeleton paths.
\end{proof}

Up to the equivalence relation defined in Remark~\ref{rem:equivalent paths}, the prime decomposition is unique for any one-skeleton path. Now we can define operators $f_i$ on the set of one-skeleton paths $OP$.
\begin{definition}\label{def:crystal operation on general paths}
    For a one-skeleton path $p$ with the prime decomposition $p=p_1*\cdots * p_m$, the operators $\{f_i\}_{i\in [n]}$ are defined as follows:
     \begin{equation}
        f_i(p) =
\begin{cases}
p_1*\cdots f_i(p_j)*\cdots p_m & \textit{j is the minimal index such that}\ \langle p_j(t),\alpha_i^\vee\rangle>0, t\in (0,1]\\
0  & \textit{if} \ \langle p_j(t),\alpha_i^\vee\rangle\leq0 \ \textit{for all }\ 1\leq j\leq m, t\in [0,1].
\end{cases} \nonumber
    \end{equation}
\end{definition}
We cannot define a crystal structure on $OP$ using $f_i$ directly since $f_i$ is not injective on the preimage set $f_i^{-1}(OP)$. This motivates the polytopal map introduced in the next subsection, which gives MV polytopes with crystal structures for certain one-skeleton paths.

\begin{remark}\label{rem:littelmann operators}
   The operators $\{f_i\}_{i\in [n]}$ on fundamental one-skeleton paths defined here coincide with the root operators defined in~\ref{def: crystal operator on fundamental}. For any fundamental one-skeleton $p$ and any simple root $\alpha_i$, the function $t\to \langle p(t), \alpha_i^\vee\rangle$ is monotone for $t\in [0,1]$. By Equation~\eqref{eq: explicit formula for A_n crystal action}, we have $|\langle p(t), \alpha_i^\vee\rangle|\leq 1$ for any fundamental one-skeleton path $p$ and any simple root $\alpha_i$ is. Thus the operators $f_i$ coincide with the root operators defined by Littelmann in Section 1 in \cite{LP}. For a general one-skeleton path, our operators $f_i$ are different from Littelmann's root operators. In particular, we cannot define inverse operators $e_i$ for a general one-skeleton path.
\end{remark}

\subsection{Polytopal map}\label{subsection: polytopal map}

We associate a polytope to any one-skeleton path using the operators $\{f_i\}_{i\in [n]}$.

\begin{definition}\label{def: polytopal map}
    For any nonzero one-skeleton path $p$, the \textbf{integral polytope} associated with $p$ is \[
    \Pol(p)=\Conv\{\wt(w(p))\}_{w\in \W, w(p)\neq0}.
    \]
\end{definition}

The map $\Pol$ is from the set of one-skeleton paths $OP$ to the set of integral polytopes in the weight space, which is called the \textbf{polytopal map}. The following theorem shows that the polytopal map sends the concatenation of one-skeleton paths to the Minkowski sum of polytopes.  

\begin{theorem}\label{thm: pol and concatenation are compatible}
    For $p,q\in OP$, we have\begin{align}
        \Pol(p*q)=\Pol(p)\boxplus \Pol(q). \nonumber
    \end{align} 
\end{theorem}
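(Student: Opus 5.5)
The plan is to prove the two inclusions $\Pol(p*q)\subseteq \Pol(p)\boxplus\Pol(q)$ and $\Pol(p)\boxplus\Pol(q)\subseteq\Pol(p*q)$ separately. Both work at the level of prime decompositions. Write $p=p_1*\cdots*p_m$ and $q=q_1*\cdots*q_k$ as in Theorem~\ref{thm: prime decomposition}, so that $p*q=p_1*\cdots*p_m*q_1*\cdots*q_k$ is the prime decomposition of $p*q$. Every $f_i$ acts on exactly one factor by a simple reflection, and that factor stays in $P_f$. Hence every $w(p*q)\neq 0$ has the form $p'_1*\cdots*p'_m*q'_1*\cdots*q'_k$, and its weight is $\sum_j\wt(p'_j)+\sum_l\wt(q'_l)$.

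\emph{First inclusion.} Follow a word $w$ applied to $p*q$ one letter at a time. When $f_i$ hits one of the first $m$ factors, it hits the minimal-index $i$-positive factor among $p'_1,\dots,p'_m$, which is exactly what $f_i$ does to $p'_1*\cdots*p'_m$ by Definition~\ref{def:crystal operation on general paths}. When $f_i$ hits a factor $q'_l$, no $p'_j$ is $i$-positive, and $q'_l$ is the minimal $i$-positive factor among the $q'$'s, so this is $f_i$ acting on $q'_1*\cdots*q'_k$. Extracting the subwords $w_1$ and $w_2$ of letters that act on the $p$-part and on the $q$-part respectively gives $w(p*q)=w_1(p)*w_2(q)$. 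Therefore the weight set of $p*q$ is contained in $\{\wt(w_1(p))+\wt(w_2(q))\}$, and taking convex hulls gives this inclusion.

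\emph{Second inclusion.} It suffices to reach every vertex of $\Pol(p)\boxplus\Pol(q)$. For a generic linear functional $\xi$, the $\xi$-maximal vertex of $\Pol(p)\boxplus\Pol(q)$ is $x_\xi+y_\xi$, where $x_\xi$ and $y_\xi$ are the $\xi$-maximal vertices of the two summands. To realize it I would use \emph{greedy strings}. Choose a reduced word $\mathbf i=(i_1,\dots,i_N)$ of $w_0$ adapted to the chamber of $\xi$, and apply $f_{i_1}^{\max},f_{i_2}^{\max},\dots$ in turn, each time applying $f_i$ until it returns $0$. The key observation is that saturating $f_i$ on a concatenation lowers \emph{every} $i$-positive factor, regardless of which block it lies in. So the greedy string applied to $p*q$ equals the greedy string applied to $p$, concatenated with the greedy string applied to $q$. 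Consequently, the weights of all partial greedy strings, one for each prefix of $\mathbf i$, are additive under concatenation.

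The remaining and main step is the lemma that for any one-skeleton path $r$, each vertex of $\Pol(r)$ is the weight of a greedy string along some reduced word of $w_0$, or along a prefix of one. This is the analogue of the statement that vertices of an MV polytope are the points $v_j$ read off from Lusztig data. I would prove it by induction on the length $m$ of $r$. For $m=1$, $r\in V_j$, and $\Pol(r)$ is the convex hull of a lower interval of the orbit $W\cdot\omega_j$. By Proposition~\ref{prop: crystal structure fpr fundamental paths} and the explicit description \eqref{eq: explicit formula for A_n crystal action}, with $\{-1,0,1\}$ coordinates, one can check directly that the greedy path along a $\xi$-adapted reduced word ends at the $\xi$-maximal reachable point. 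The inductive step would use the first inclusion together with the additivity of greedy strings. This lemma is where I expect the real difficulty. In particular, one must argue that for an arbitrary $\xi$ one can choose the reduced word (or prefix) so that greedy lowering never overshoots the $\xi$-optimum; this is exactly where non-greedy words are dangerous, since a non-greedy word applied to $p*q$ may be absorbed by the $p$-block. If the direct argument fails, I would instead characterize $\Pol$ of a fundamental path by its facets, namely inequalities in the directions of the $W$-translates of the fundamental weights, and show that these inequalities add under concatenation.
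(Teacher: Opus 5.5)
\textbf{The reverse inclusion is false, so your proposal cannot be completed.} Your first inclusion is correct and is the paper's Lemma~\ref{lem: right inclusion}: sorting the letters of $w$ according to the block they hit gives $w(p*q)=w_1(p)*w_2(q)$. Your remark that $f_i^{\max}$ acts factorwise, so that greedy strings are additive under $*$, is also right. The gap is $\Pol(p)\boxplus\Pol(q)\subseteq\Pol(p*q)$, which fails in general.

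In type $A_2$ take $p=\omega_1-\omega_2\in V_2$ and $q=\omega_1\in V_1$. On $p*q$ the operator $f_2$ gives $0$, since the factors pair with $\alpha_2^\vee$ to $-1$ and $0$. The operator $f_1$ acts on the first factor and gives $(-\omega_1)*\omega_1$. There $f_2$ again gives $0$ (pairings $0,0$), and the only further moves are $f_1$ and then $f_2$ on the second factor. Hence
\[
\Pol(p*q)=\Conv\{2\omega_1-\omega_2,\ 0,\ \omega_2-2\omega_1,\ -\omega_1-\omega_2\}.
\]
On the other hand, $\omega_1-2\omega_2=\wt(p)+\wt(f_2f_1(q))$ lies in $\Pol(p)\boxplus\Pol(q)$. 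The functional $a\omega_1+b\omega_2\mapsto -b$ equals $2$ at this point and is at most $1$ on $\Pol(p*q)$. So the statement fails. Corollary~\ref{coro: commute} fails too, because $\Pol(q*p)$ does contain $\omega_1-2\omega_2$. Only $\Pol(p*q)\subseteq\Pol(p)\boxplus\Pol(q)$ survives.

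\textbf{Where the paper's argument breaks.} It fails exactly where you were worried. Lemma~\ref{lem: left inclusion} asserts $w_2\cdot w_1(p*q)\neq 0$ whenever $w_1(p),w_2(q)\neq 0$. With $w_1$ empty and $w_2=f_2f_1$, the letter $f_1$ is absorbed by the $p$-block, and afterwards no factor is $2$-positive.

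\textbf{Where your greedy-string lemma breaks.} It fails even in the order where equality happens to hold. In $q*p=\omega_1*(\omega_1-\omega_2)$, the point $\omega_1-2\omega_2$ is a vertex of $\Pol(q*p)$. It is reached only by applying $f_1$ once and then $f_2$, which is not a greedy string. Every greedy string ends at $2\omega_1-\omega_2$, $\omega_2-2\omega_1$ or $-\omega_1-\omega_2$.

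Factorwise, the problem is this. For this functional, the optimum $-\omega_2$ on $\Pol(\omega_1)$ needs an $f_1$. The optimum $\omega_1-\omega_2$ on $\Pol(\omega_1-\omega_2)$ is destroyed by any saturated $f_1$. So no single $\xi$-adapted word serves both factors, and that uniformity is exactly what your induction step requires. Any correct version of the reverse inclusion needs extra hypotheses on the factors or on their order.
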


 We need the following two lemmas to prove Theorem~\ref{thm: pol and concatenation are compatible}:
 \begin{lemma}\label{lem: left inclusion}
     For $p,q\in OP$, we have
    \begin{align}
        \wt(w_1(p)*w_2(q))=\wt(w_2\cdot w_1(p*q))  \nonumber
    \end{align}
for any $w_1,w_2\in \W$ such that $w_1(p)$ and $w_2(q)$ are not $0$.
 \end{lemma}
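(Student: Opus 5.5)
The plan is to reduce the statement to two facts: each nonzero application of an operator $f_i$ lowers the weight by exactly $\alpha_i$, and none of the operators in $w_2\cdot w_1$ kills $p*q$. Once both hold, each side equals $\wt(p)+\wt(q)-\sum \alpha_{i}$, the sum running over all letters of $w_1$ and $w_2$ with multiplicity. Throughout I read $w_2\cdot w_1(p*q)$ as first applying $w_1$ and then $w_2$, consistent with the definition $w(p)=f_{i_1}\circ\cdots\circ f_{i_m}(p)$.

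\textbf{Step 1: weight drop.} By Theorem~\ref{thm: prime decomposition} and the uniqueness of prime decompositions, the prime decomposition of $r*s$ is the concatenation of those of $r$ and $s$. By Definition~\ref{def:crystal operation on general paths}, a nonzero $f_i(r)$ replaces exactly one factor $r_j$ by $f_i(r_j)$. By Proposition~\ref{prop: crystal structure fpr fundamental paths}, this changes $\wt(r_j)$ into $\wt(r_j)-\alpha_i$. Since the weight of a concatenation is the sum of the weights of its factors, $\wt(f_i(r))=\wt(r)-\alpha_i$ whenever $f_i(r)\neq 0$. This immediately gives $\wt(w_1(p)*w_2(q))=\wt(p)+\wt(q)-\sum\alpha$.

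\textbf{Step 2: the $w_1$ part.} I claim that if $f_i(r)\neq0$ then $f_i(r*s)=f_i(r)*s$. Indeed, the minimal index of a factor with positive $\alpha_i^\vee$-pairing already occurs among the factors of $r$, so the operator acts there. Inducting on the letters of $w_1$ gives $w_1(p*q)=w_1(p)*q\neq 0$.

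\textbf{Step 3: the $w_2$ part (main obstacle).} It remains to show that $w_2$ applied to $w_1(p)*q$ is nonzero, knowing only that $w_2(q)\neq0$. The difficulty is that each $f_j$ may act on a factor coming from $w_1(p)$ rather than on one from $q$, so the factors inherited from $q$ no longer follow the orbit of $q$. I would track the counts
\[
N_i(r)=\#\{\text{factors } r_k \text{ of } r \text{ with } a_{r_k i}=1\},
\]
using the description after Proposition~\ref{prop: crystal structure fpr fundamental paths}: for a fundamental path with coefficients $a_{r_k i}\in\{-1,0,1\}$, the operator $f_i$ is nonzero on $r_k$ iff $a_{r_k i}=1$. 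Hence $f_i(r)\neq0$ iff $N_i(r)\ge1$.

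When $f_j$ acts on a factor (so its $a_j=1$), the coefficient $a_j$ becomes $-1$ and the neighbouring coefficients $a_{j\pm1}$ each increase by one. Consequently $N_j$ drops by exactly one and no other $N_i$ decreases.

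The target invariant, proved by induction on the prefixes $u$ of $w_2$, is
\[
N_i\bigl(u(w_1(p)*q)\bigr)\ \ge\ N_i\bigl(u(q)\bigr)\quad\text{for all } i,
\]
which forces nonzeroness at each step. The problematic case is when $f_j$ acts on a factor of $w_1(p)$. Then $u(q)$ may gain an $N_{j\pm1}$ that the big path does not gain.

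To handle this, I would first try to prove an exchange statement: the big path always contains, as a sub-multiset of its factors, something dominating the factors of $u(q)$ in the sense of the counts $N_i$. I would try to establish this by a case analysis on the three possible values of $a_{j\pm1}$ in the factor that is acted upon. Failing that, I would replace $N_i$ by the Littelmann-type quantity $\langle\wt(\cdot),\alpha_i^\vee\rangle$ together with the fact (Remark~\ref{rem:littelmann operators}) that each factor contributes at most $1$.
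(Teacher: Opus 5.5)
\textbf{Step 3 is a genuine gap, and it cannot be closed: the lemma is false as stated.}

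Your Steps 1 and 2 are correct and are exactly the paper's argument. Each nonzero letter lowers the weight by $\alpha_i$. Since $w_1(p)\neq 0$, every letter of $w_1$ is absorbed by the $p$-block, so $w_1(p*q)=w_1(p)*q$. The paper handles your Step 3 in one sentence: because $w_2(q)\neq 0$, each letter of $w_2$ supposedly finds a factor with positive pairing among the last $m'$ factors. That sentence has exactly the hole you flagged: once a letter of $w_2$ is absorbed by a factor of $w_1(p)$, the $q$-block no longer evolves like $u(q)$. Your proposal leaves this step open, so it is not a proof. More seriously, the nonvanishing claim is false. So neither your invariant $N_i(u(w_1(p)*q))\ge N_i(u(q))$, nor an exchange statement, nor the fallback via $\langle \wt(\cdot),\alpha_i^\vee\rangle$ can be established.

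\textbf{Counterexample in type $A_2$.} Take $p=\omega_1-\omega_2=f_2(\omega_2)\in P_f$, $q=\omega_1$, $w_1$ the empty word, and $w_2=f_2f_1$.
\begin{itemize}
\item On $q$ alone, $w_2(q)=f_2(\omega_2-\omega_1)=-\omega_2\neq 0$.
\item In $p*q$, the first factor already has $\langle\omega_1-\omega_2,\alpha_1^\vee\rangle=1$, so $f_1(p*q)=(-\omega_1)*\omega_1$.
\item Since $\langle-\omega_1,\alpha_2^\vee\rangle=\langle\omega_1,\alpha_2^\vee\rangle=0$, the next letter gives $w_2w_1(p*q)=0$, of weight $-\infty$.
\item The left-hand side is finite: $\wt(p)+\wt(q)-\alpha_1-\alpha_2=-\alpha_2$.
\end{itemize}
Requiring $w_1$ to be nonempty does not help: with $p=\omega_2$ and $w_1=f_2$, the same computation gives $f_2f_1f_2(\omega_2*\omega_1)=0$.

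In your notation, this is precisely your problematic case. $f_1$ acts on the factor $\omega_1-\omega_2$, whose $\omega_2$-coefficient goes from $-1$ to $0$, so $N_2$ of the big path stays $0$. On $q$ alone, $f_1(\omega_1)=\omega_2-\omega_1$ has $\omega_2$-coefficient $1$, so $N_2(u(q))=1$.

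\textbf{No other word rescues it.} Enumerating all words, the only weights reachable from $p*q$ are $\alpha_1,0,-\alpha_1,-\alpha_1-\alpha_2$. So $\Pol(p*q)=\Conv(\alpha_1,-\alpha_1,-\alpha_1-\alpha_2)$. This does not contain $-\alpha_2=\wt(p)+\wt(f_2f_1(q))\in\Pol(p)\boxplus\Pol(q)$. Hence the inclusion $\Pol(p)\boxplus\Pol(q)\subseteq\Pol(p*q)$, which this lemma is meant to supply, also fails. In this example $\Pol(q*p)$ does equal $\Pol(p)\boxplus\Pol(q)$, so $\Pol(p*q)\neq\Pol(q*p)$ as well.

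A correct version needs an extra hypothesis, for instance that no letter of $w_2$ is ever absorbed by the $w_1(p)$-block. Under that hypothesis your Steps 1 and 2 already finish the argument.
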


\begin{proof}[Proof of Lemma~\ref{lem: left inclusion}]
  We write the prime decomposition of $p$ and $q$ as follows:\begin{align}
      p=p_1*\cdots*p_{m}\ \textit{and }
      q=p_{m+1}*\cdots *p_{m+m'}. \nonumber
  \end{align}
  
  Suppose that the word decompositions of $w_1$ and $w_2$ are:
  \begin{align}
      w_1=f_{j_{1,1}}\cdots f_{j_{1,s_1}} \ \textit{and } w_2=f_{j_{2,1}}\cdots f_{j_{2,s_2}}. \nonumber
  \end{align}
  
  Notice that $w_k(r)\neq 0$ implies that $\wt(w_k(r))=\wt(r)-(\Sigma_{1\leq i\leq s_k}\alpha_{k,i})$ for any $k\in\{1,2\}$ and $r\in OP$, where $\alpha_{k,i}$ is the simple root corresponding to $s_{j_{k,i}}$.\par
  
  Since $w_1(p)\neq 0$, the action of any letter $f_{j_{1,i}}$ in $w_1$ on $p*q$ changes one of fundamental one-skeleton path among $\{p_1,\cdots , p_{m}\}$ for all $1\leq i \leq s_1$. We have $w_1(p*q)=w_1(p)*q$.  For any $1\leq i\leq s_2$, there exists at least one fundamental one-skeleton $p^+_i$ in the last $m'$ factors of $w_1(p)*q$ such that $\langle p^+_i(1),\alpha^{\vee}_{j_{2,i}}\rangle >0$, as a consequence of the condition $w_2(q)\neq 0$. We have $w_2(w_1(p)*q)\neq 0$.\par
  
  As a result, we have \begin{align}
      \wt(w_2\cdot w_1(p*q))&=\wt(w_2(w_1(p)*q))=\wt(w_1(p)*q)-(\Sigma_{1\leq i\leq s_2}\alpha_{2,i}) \nonumber \\
      &=\wt(p*q)-(\Sigma_{1\leq i\leq s_1}\alpha_{1,i})-(\Sigma_{1\leq i\leq s_2}\alpha_{2,i}) \nonumber \\
      &=\wt(p)+\wt(q)-(\Sigma_{1\leq i\leq s_1}\alpha_{1,i})-(\Sigma_{1\leq i\leq s_2}\alpha_{2,i}) \nonumber \\
      &=\wt(p)-(\Sigma_{1\leq i\leq s_1}\alpha_{1,i})+\wt(q)-(\Sigma_{1\leq i\leq s_2}\alpha_{2,i}) \nonumber \\
      &=\wt(w_1(p))+\wt(w_2(q)) \nonumber \\
      &=\wt(w_1(p)*w_2(q)). \nonumber
  \end{align}
\end{proof}

\begin{lemma}\label{lem: right inclusion}
    For any $p,q\in OP$ and $w\in \W$ such that $w(p*q)\neq 0$, there exist $w_1,w_2\in \mathcal{W}$ such that \begin{align}
        \wt(w(p*q))=\wt(w_1(p)*w_2(q)). \nonumber
    \end{align} 
\end{lemma}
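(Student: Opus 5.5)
Write the prime decompositions $p=p_1*\cdots*p_m$ and $q=p_{m+1}*\cdots*p_{m+m'}$, so that $p*q=p_1*\cdots*p_{m+m'}$ is the prime decomposition of $p*q$ (Theorem~\ref{thm: prime decomposition}). Let $w=f_{i_1}\cdots f_{i_s}$. By Definition~\ref{def:crystal operation on general paths}, each letter acts on exactly one factor of the current path, namely the first factor on which the relevant coroot pairing is positive. It only replaces that factor by its reflection, so the number of factors and their positions are preserved throughout. The plan is to sort the letters of $w$ by the factor they hit. Let $w_1$ be the subword of letters that act on one of the first $m$ factors, and $w_2$ the subword of letters that act on one of the last $m'$ factors, each kept in its original order. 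Weights then match trivially: every nonzero application of $f_i$ subtracts $\alpha_i$ (Proposition~\ref{prop: crystal structure fpr fundamental paths}). Hence, provided $w_1(p)\neq0$ and $w_2(q)\neq0$, both $\wt(w(p*q))$ and $\wt(w_1(p)*w_2(q))$ equal $\wt(p)+\wt(q)$ minus the sum of all simple roots in $w$.

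The key step is checking $w_1(p)\neq 0$ and $w_2(q)\neq0$, by induction along the word, applying letters from right to left. Write the intermediate path as $x*y$, where $x$ has $m$ factors and $y$ has $m'$ factors. I claim the invariant $x=w_1'(p)$ and $y=w_2'(q)$, where $w_1'$ and $w_2'$ are the portions of $w_1$ and $w_2$ applied so far.

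Suppose the next letter $f_i$ hits a factor inside $x$. It hits the minimal index with positive pairing, and that index lies among the first $m$ factors. So applied to $x$ alone, $f_i$ hits the same factor, and $x$ is updated compatibly.

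Suppose instead that $f_i$ hits a factor inside $y$. Minimality then says no factor of $x$ has positive $\alpha_i^\vee$-pairing, so the first factor of $y$ with positive pairing is exactly the one hit. Applied to $y$ alone, $f_i$ therefore acts on the same factor, so $f_i(y)\neq0$ and $y$ is updated compatibly.

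Induction gives $w(p*q)=w_1(p)*w_2(q)$, which is stronger than the weight equality required.

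The main obstacle is making sure Definition~\ref{def:crystal operation on general paths} really is factor-local. Positivity is tested on each factor $p_j(t)$ itself, not on the accumulated position $p_1(1)+\cdots+p_j(t)$. I will read it this way, since the paper's treatment in Lemma~\ref{lem: left inclusion} (the condition $\langle p^+_i(1),\alpha^\vee\rangle>0$ on individual factors) uses the same reading. One also needs each factor to stay a fundamental path after reflection, which holds because $f_i$ maps $P_f$ to $P_f\cup\{0\}$. With that reading, the split into $w_1$ and $w_2$ goes through without further difficulty.
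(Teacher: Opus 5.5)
Your argument is correct and is essentially the paper's proof: there too the letters of $w$ are labelled by backward induction along the word, according to whether they act on one of the first $m$ or one of the last $m'$ factors of the current path, and $w_1,w_2$ are the corresponding subwords kept in their original order. Your invariant $x=w_1'(p)$, $y=w_2'(q)$ also checks that $w_1(p)\neq 0$ and $w_2(q)\neq 0$, and indeed gives $w(p*q)=w_1(p)*w_2(q)$; the paper leaves this implicit.
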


\begin{proof}[Proof of Lemma~\ref{lem: right inclusion}]
 We write the prime decomposition of $p$ and $q$ as follows:\begin{align}
      p=p_1*\cdots*p_{m}\ \textit{and }
      q=p_{m+1}*\cdots *q_{m+m'}. \nonumber
  \end{align}
  Suppose that the word decomposition of $w$ is:
  \begin{align}
      w=f_{j_1}\cdots f_{j_s}. \nonumber
  \end{align}
  
  We define a map $c$ from $\{1,\cdots ,s\}$ to $\{1,2\}$ by backward induction from $s$ to $1$ as follows:\begin{align}
      c(i)=
      \begin{cases}
          1 & \textit{if } f_{j_i} \textit{ changes one of the first }m \textit{ factors of } f_{j_{i+1}}\cdots f_{j_s}(p*q),\\
          2 & \textit{if } f_{j_i} \textit{ changes one of the last } m'\textit{ factors of } f_{j_{i+1}}\cdots f_{j_s}(p*q).
      \end{cases} \nonumber
  \end{align}
  Because of the assumption $w(p*q)\neq 0$, we have a partition of the set $\{1,\cdots s\}=c^{-1}(1)\sqcup c^{-1}(2)$, where $c^{-1}(1)=\{k_1<\cdots <k_l\}$ and $c^{-1}(2)=\{k_{l+1}<\cdots <k_s\}$. Then we have
  \begin{align}
      \wt(w(p*q))=\wt(w_1(p)*w_2(q)), \nonumber
  \end{align}
  where $w_1=f_{j_{k_1}}\cdots f_{j_{k_l}}$ and $w_2=f_{j_{k_{l+1}}}\cdots f_{j_{k_s}}$.
\end{proof}

\begin{proof}[Proof of Theorem~\ref{thm: pol and concatenation are compatible}]
    Given two one-skeleton paths $p$ and $q$, we have \begin{align}
       \Pol(p*q) &=\Conv(\wt(w(p*q)))_{w\in \W, w(p*q)\neq 0} \nonumber \\
       & \subseteq \Conv(\wt(w_1(p)*w_2(q)))_{w_1,w_2\in \W, w_1(p), w_2(q)\neq 0} \nonumber \\
       &= \Conv(\wt(w_1(p))+\wt(w_2(q)))_{w_1,w_2\in \W, w_1(p), w_2(q)\neq 0} \nonumber \\
       &=\Conv(\wt(w_1(p)))_{w_1\in \W, w_1(p)\neq 0}\boxplus \Conv(\wt(w_2(q)))_{w_2\in \W, w_2(q)\neq 0}  \nonumber \\
       &=\Pol(p)\boxplus \Pol(q), \nonumber
    \end{align}
    by Lemma~\ref{lem: right inclusion}.\par
    
    Conversely, we obtain \begin{align}
       \Pol(p)\boxplus \Pol(q) &=\Conv(\wt(w_1(p)))_{w_1\in \W, w_1(p)\neq 0}\boxplus \Conv(\wt(w_2(q)))_{w_2\in \W, w_2(q)\neq 0} \nonumber \\
       &= \Conv(\wt(w_1(p))+\wt(w_2(q)))_{w_1,w_2\in \W, w_1(p), w_2(q)\neq 0}\nonumber \\
       &= \Conv(\wt(w_1(p)*w_2(q)))_{w_1,w_2\in \W, w_1(p), w_2(q)\neq 0} \nonumber \\
       &= \Conv(\wt(w_2 \cdot w_1(p*q)))_{w_1,w_2\in \W, w_1(p), w_2(q)\neq 0} \nonumber \\
       & \subseteq \Conv(\wt(w(p*q)))_{w\in \W, w(p*q)\neq 0}  \nonumber \\
       & =\Pol(p*q), \nonumber
    \end{align}
    by Lemma~\ref{lem: left inclusion}.\par
    
    Hence, we have \begin{align}
       \Pol(p*q)=\Pol(p)\boxplus \Pol(q). \nonumber
    \end{align}
\end{proof}

A corollary of Theorem~\ref{thm: pol and concatenation are compatible} is that the order of the concatenation of one-skeleton paths does not affect the associated polytope.
\begin{corollary}\label{coro: commute}
    For any $p,q\in OP$, we have \begin{align}
        \Pol(p*q)=\Pol(q*p). \nonumber
    \end{align}
\end{corollary}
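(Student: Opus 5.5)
The plan is to derive Corollary~\ref{coro: commute} directly from Theorem~\ref{thm: pol and concatenation are compatible} together with the commutativity of the Minkowski sum. Both $p*q$ and $q*p$ are one-skeleton paths in $OP$: the image of $q*p$ consists of the image of $q$ followed by a translate of the image of $p$ by $q(1)\in\Lambda$. Translation by $\Lambda$ permutes alcoves and hence their edges, so this translate still lies in the one-skeleton. Also $(q*p)(1)=q(1)+p(1)\in\Lambda$. Consequently Theorem~\ref{thm: pol and concatenation are compatible} applies to both orders of concatenation.

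Applying the theorem twice gives
\begin{align}
\Pol(p*q)=\Pol(p)\boxplus\Pol(q)\quad\text{and}\quad \Pol(q*p)=\Pol(q)\boxplus\Pol(p). \nonumber
\end{align}
The Minkowski sum is commutative because addition in $X$ is: $\{x+y\mid x\in P_1,y\in P_2\}=\{y+x\mid y\in P_2,x\in P_1\}$. Hence the two right-hand sides coincide, and so $\Pol(p*q)=\Pol(q*p)$. Since this is an equality of sets, it holds in particular up to translation.

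No step here is a real obstacle. The only care needed is the well-definedness check in the first paragraph. The substantive content is the theorem itself, which rests on Lemmas~\ref{lem: left inclusion} and~\ref{lem: right inclusion}; those lemmas show that the operators $f_i$ acting on a concatenation act factorwise on the prime decomposition, which reduces the statement to the symmetric identity $\wt(w_1(p)*w_2(q))=\wt(w_1(p))+\wt(w_2(q))$.
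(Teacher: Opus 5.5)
\textbf{The deduction is the paper's, but the statement it relies on is false.} Your argument is word for word the paper's proof: apply Theorem~\ref{thm: pol and concatenation are compatible} to $p*q$ and to $q*p$, then use commutativity of $\boxplus$. Your check that $q*p\in OP$ is fine. The gap is that the theorem itself is false, and so is the corollary.

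\textbf{Where the theorem's proof breaks.} Only Lemma~\ref{lem: right inclusion} is sound. Splitting the letters of $w$ by the block they modify gives $\Pol(p*q)\subseteq\Pol(p)\boxplus\Pol(q)$. The reverse inclusion rests on the claim in Lemma~\ref{lem: left inclusion} that $w_2(w_1(p)*q)\neq 0$. This is exactly your claim that the operators ``act factorwise''. They do not. A letter of $w_2$ acts on the \emph{first} eligible factor of the whole path, which may lie in the $w_1(p)$-block. The $q$-block then no longer evolves as it does under $w_2$ alone, and a later letter may find no eligible factor anywhere.

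\textbf{Counterexample.} Take type $A_2$, $p=\omega_1-\omega_2=f_2(\omega_2)\in P_f$ and $q=\omega_1$. Then $f_1(p)=-\omega_1$, $f_2(p)=0$, $f_1(q)=\omega_2-\omega_1$ and $f_2f_1(q)=-\omega_2$.

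On $p*q$:
\begin{itemize}
\item Initially $f_2$ does not apply, since $\langle p,\alpha_2^\vee\rangle=-1$ and $\langle q,\alpha_2^\vee\rangle=0$.
\item $f_1$ acts on the first factor.
\item A direct check shows that the only nonzero paths obtained are
\[
(\omega_1-\omega_2)*\omega_1,\quad (-\omega_1)*\omega_1,\quad (-\omega_1)*(\omega_2-\omega_1),\quad (-\omega_1)*(-\omega_2),
\]
with weights $2\omega_1-\omega_2$, $0$, $\omega_2-2\omega_1$, $-\omega_1-\omega_2$.
\item All four weights satisfy $\langle x,\alpha_2^\vee\rangle\geq -1$.
\end{itemize}

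On $q*p$, by contrast, $f_1$ first acts on $q$ and then $f_2$ acts on the first factor. This produces $(-\omega_2)*(\omega_1-\omega_2)$, of weight $\omega_1-2\omega_2$, and $\langle \omega_1-2\omega_2,\alpha_2^\vee\rangle=-2$.

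Hence $\omega_1-2\omega_2$ lies in $\Pol(q*p)=\Pol(p)\boxplus\Pol(q)$ but not in $\Pol(p*q)$. Moreover:
\begin{itemize}
\item $\Pol(p*q)$ is a triangle and $\Pol(q*p)$ is a quadrilateral, so they differ even up to translation.
\item After translating by $\alpha_1+\alpha_2$, $\Pol(q*p)$ is the polytope $P$ of Example~\ref{example: bad P}.
\item $\Pol(p*q)$ is the convex hull of the weights $\{\wt(x)\}_{x\in B_P}$ listed in that example.
\end{itemize}

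\textbf{What survives.} No argument can prove the corollary as stated. The true statement is only $\Pol(p*q)\cup\Pol(q*p)\subseteq\Pol(p)\boxplus\Pol(q)$.
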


\begin{proof}
    Using Theorem~\ref{thm: pol and concatenation are compatible} and the commutativity of the Minkowski sum, we have \begin{align}
    \Pol(p*q)=\Pol(p)\boxplus \Pol(q)=\Pol(q)\boxplus \Pol(p)=\Pol(q*p). \nonumber
    \end{align}
\end{proof}

\subsection{The free path algebra \texorpdfstring{$\Fp$}{Fp}}\label{subsection: free path algebra}

\begin{definition}\label{def: free path algebra}
     The \textbf{free path algebra} $\mathcal{F}_p$ of $G$ is the free commutative
     $\mathbb{C}$-algebra generated by the set $P_f$ of fundamental one-skeleton paths.
     Equivalently,
     \[
        \mathcal{F}_p := \operatorname{Sym}_{\mathbb{C}}(\mathbb{C}P_f),
     \]
     where $\mathbb{C}P_f$ denotes the $\mathbb{C}$-vector space with basis $P_f$.
\end{definition}

A monomial $p_1\cdots p_m$ in $\mathcal{F}_p$ can be represented geometrically by the
concatenated one-skeleton path
\[
    p_1 * \cdots * p_m.
\]
Although the concatenation of paths depends on the order of the factors, Corollary~\ref{coro: commute}
shows that the associated polytope
\[
    \operatorname{Pol}(p_1 * \cdots * p_m)
\]
is independent of this order. Hence this geometric representative is compatible with the
commutative algebra structure of $\mathcal{F}_p$ after applying the polytopal map.

\begin{theorem}\label{thm: fundamental MV polytopes}
    For any fundamental one-skeleton path $p\in P_f$, the polytope $\Pol(p)$ is an MV polytope. 
\end{theorem}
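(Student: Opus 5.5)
Since $p\in P_f$ lies in some $V_j=W\cdot\omega_j$, I would first compute $\Pol(p)$ explicitly. By Definition~\ref{def: crystal operator on fundamental} and Proposition~\ref{prop: crystal structure fpr fundamental paths}, every nonzero $w(p)$ is again a fundamental path in $V_j$. Each step $f_i$ subtracts $\alpha_i$, and is allowed exactly when the coefficient $a_{pi}$ equals $1$. So $\{\wt(w(p))\}$ is the set $D(p)$ of all descendants of $p$ in the $j$-chain $Q_j$, and $\Pol(p)=\Conv D(p)$. By Equation~\eqref{eq: explicit formula for A_n crystal action} we identify $V_j$ with $j$-subsets $S\subseteq[n+1]$, writing $\wt=\sum_{s\in S}\varepsilon_s$ with $\varepsilon_s=\omega_s-\omega_{s-1}$. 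Then $f_i$ replaces $i\in S$ by $i+1\notin S$. Hence $D(p)$ is the set of $j$-subsets $T$ that dominate $S$ componentwise: $t_k\geq s_k$ after sorting. This is a Bruhat interval $[S,\,\{n-j+2,\dots,n+1\}]$ in the Grassmannian poset.

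Next I would show that $\Conv D(p)$ is an MV polytope. Translate so that the lowest vertex $\{n-j+2,\ldots,n+1\}$, that is $w_0\omega_j$, becomes $\mathbf 0$. Since the paper works up to translation, the polytope is then $\Conv\{\wt(T)-w_0\omega_j\}$ with highest vertex $\wt(p)-w_0\omega_j$. The cleanest route is through Kamnitzer's characterization by tropical Plücker relations (Theorem~3.1 in \cite{JOEL}). I would compute the BZ datum
\[
M_\gamma=\min_{T\in D(p)}\langle \wt(T)-w_0\omega_j,\gamma\rangle
\]
for each chamber weight $\gamma=w\omega_k$. Because $D(p)$ is an order ideal (an upper set) for dominance, the minimum is attained at a combinatorially described subset. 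I expect $M_\gamma$ to be given by a "greedy" formula: the number of elements of the $k$-set $w\{1,\dots,k\}$ that are forced to meet $T$, minimized over $T\supseteq_{\mathrm{dom}} S$. This should be expressible as a max/min of counts of the form $\#(S\cap[a,b])$ compared with interval sizes. I would then verify the tropical Plücker relation
\[
M_{ws_i\omega_i}+M_{ws_{i'}\omega_{i'}}=\min\bigl(M_{w\omega_i}+M_{ws_is_{i'}\omega_{i'}},\,M_{ws_{i'}\omega_{i'}}+M_{ws_is_{i'}\omega_{i}}\bigr)
\]
for each pair of adjacent indices $i,i'$. This verification is the step I expect to be the main obstacle, and it is where the work lies.

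As a fallback, and probably a better primary route, I would avoid Plücker relations altogether. The idea is to identify $\Conv D(p)$ with a known MV polytope directly. A Demazure-type upper interval of weights of $L(\omega_j)$ is convex-hull-wise the MV polytope of the extremal/Demazure element. In other words, $\Conv D(p)$ is the moment polytope of the Schubert variety $\overline{B^- \cdot x_S}$ in $Gr(j,n+1)$, shifted. Baumann–Kamnitzer–Knutson and Kamnitzer show that such moment polytopes (equivalently, polytopes of the extremal MV cycles, obtained from $b_{w\omega_j}$ via $\psi_{\omega_j}$) are MV polytopes. Concretely, I would realize $p$ as the extremal element $v_{w\omega_j}$ of $B(\omega_j)\subset B(\infty)$. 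I would then use Theorem~\ref{mvpolytope}(3) to show that its MV polytope has its vertices exactly at the crystal descendants' weights, via the Lusztig data along a reduced word adapted to $w$. Either way, the final check is that the vertex set agrees with $D(p)$. That part reduces to the dominance description above.
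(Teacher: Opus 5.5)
Your proposal is correct in its second route, and that route is genuinely different from the paper's; the first route is not a proof and should be dropped.

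\textbf{Your reduction step.} $\Pol(p)=\Conv D(p)$ is right. On $W\omega_j$ the componentwise order on $j$-subsets agrees with dominance, since the $\alpha_m$-coefficient of $\wt(S)-\wt(T)$ is $\#(S\cap[1,m])-\#(T\cap[1,m])$. It also agrees with Bruhat order.

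\textbf{The tropical Pl\"ucker route.} You leave it unverified, and the relation is misstated. Since $s_{i'}\omega_i=\omega_i$, your second term on the right equals the left-hand side. It should be $M_{w\omega_{i'}}+M_{ws_{i'}s_i\omega_i}$.

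\textbf{The geometric route.} This works once you state the minuscule input explicitly:
\begin{enumerate}
\item Because $\omega_j$ is minuscule, $\mathrm{Gr}^{\omega_j}$ is closed and isomorphic to $\mathrm{Gr}(j,n+1)$.
\item For $\mu=\wt(p)$, the set $\mathrm{Gr}^{\omega_j}\cap S_\mu$ is a single $N$-orbit. Hence its closure, a Schubert variety, is the unique MV cycle of type $\omega_j$ and weight $\mu$.
\item Its $T$-fixed points are the $t^\nu$ with $\nu\in W\omega_j$ and $\nu\preceq\mu$, which is exactly $D(p)$.
\item So its moment polytope, which is an MV polytope by Anderson--Kamnitzer, equals $\Conv D(p)$.
\end{enumerate}
With this in place, the Lusztig-datum check via Theorem~\ref{mvpolytope}(3) is unnecessary.

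\textbf{Comparison with the paper.} The paper argues through Remark~\ref{rem:littelmann operators}. On fundamental paths, $f_i$ agrees with Littelmann's root operator, because $\langle p(t),\alpha_i^\vee\rangle$ is linear in $t$ with values in $[-1,1]$. The paper then quotes Baumann--Gaussent and Ehrig: the convex hull of the endpoints of root-operator images of such a path is an MV polytope.
\begin{itemize}
\item The paper's argument is shorter and stays inside the path/gallery framework. However, it is a black box and does not tell you which MV polytope you get.
\item Your argument uses that $\omega_j$ is minuscule, so the relevant MV cycle is literally a Schubert variety in $\mathrm{Gr}(j,n+1)$. In exchange, it identifies $\Pol(p)$ explicitly as the MV polytope of the extremal element of weight $\wt(p)$ in $B(\omega_j)$, embedded through its lowest weight. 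That polytope is a Bruhat-interval polytope.
\item Because the minuscule crystal has one element per weight, your identification also makes Proposition~\ref{prop: crystal graph} essentially transparent.
\end{itemize}
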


\begin{proof}
 By Remark~\ref{rem:littelmann operators}, the operators $\{f_i\}_{i\in [n]}$ on a fundamental one-skeleton path coincide with the root operators $f_{\alpha_i}$ defined by Littelmann for paths in $X$ in \cite{LP}.\par
   Theorem 4.6 and Theorem 4.9 in \cite{BG} equipped with Theorem 8.1 in \cite{ehrig} imply that the convex hull of all endpoints of images of $p$ under Littelmann's root operators $f_{\alpha_i}$ is an MV polytope. By the consistency between the operators $f_i$ and Littelmann's operators $f_{\alpha_i}$ established above, we have that $\Pol(p)$ is an MV polytope for any fundamental one-skeleton path $p$.   
\end{proof}

Given a perfect basis $\mathcal{B}=\{b_i\}_{i\in B(\infty)}$ of $\CN$, we have a unique $G$-crystal isomorphism $\varphi_{\mathcal{B}}$ from $\MV$ to $\B$ defined by $\varphi _{\B}(P_i)=b_i$ for any $i\in B(\infty)$, which preserves the $G$-crystal structure. Any MV polytope is the Minkowski sum of prime MV polytopes. Theorem~\ref{thm: fundamental MV polytopes} lets us deduce a family of MV polytopes from one-skeleton paths, which induces an algebra homomorphism from $\Fp$ to $\CN$ with respect to $\mathcal{B}$. 

\begin{definition}\label{def: algebraic one-skeleton map}
Given a perfect basis \(\mathcal{B}=\{b_i\}_{i\in B(\infty)}\) of \(\mathbb C[N]\), Theorem~\ref{thm: fundamental MV polytopes} gives $\varphi_{\mathcal{B}}(\operatorname{Pol}(p))\in \mathcal{B}
$
for every \(p\in P_f\). Hence the assignment
$p\longmapsto \varphi_{\mathcal{B}}(\operatorname{Pol}(p))$, $ p\in P_f$,
extends uniquely to an algebra homomorphism
\[
    \phi_{\mathcal{B}}:\mathcal F_p\longrightarrow \mathbb C[N].
\]
We call \(\phi_{\mathcal{B}}\) the \textbf{one-skeleton algebraic map} with respect to \(\mathcal{B}\).
\end{definition}
The one-skeleton algebraic map with respect to the unique perfect basis of $\CN$ is surjective when $G$ is of type $A_2$ or $A_3$. Moreover, we prove that the one-skeleton algebraic map with respect to the semicanonical basis is always surjective for any $G$ of type $A_n$ in Section~\ref{sec:mvpolytopes}.\par

In general, suppose that \(p=p_1*\cdots *p_m\) is a one-skeleton path whose factors give
MV polytopes with the same BZ-choice in the sense of Section 6.1 of \cite{JOEL}. Then
\(\operatorname{Pol}(p)\) is again an MV polytope, and
\[
\phi_{\mathcal{B}}(p_1\cdots p_m)=\varphi_{\mathcal{B}}(\operatorname{Pol}(p)).
\]
Here we use the result that the set of MV polytopes with the same BZ-choice is closed under the Minkowski sum by Theorem 6.2 in \cite{JOEL}.
\begin{remark}\label{rem: multiplication property}
    A perfect basis $\B$ is said to satisfy the multiplication property if \begin{align}
        \varphi_{\B}(P_i\boxplus P_j)=\varphi _{\B}(P_i)\cdot \varphi _{\B}(P_j), \nonumber
    \end{align}
    for any $i,j\in B(\infty)$ such that the Minkowski sum $P_i\boxplus P_j$ is still an MV polytope. The unique perfect basis of $\CN$ satisfies the multiplication property if $G$ is of type $A_2$.
\end{remark}

\section{MV polytopes from fundamental one-skeleton paths}\label{sec:mvpolytopes}
In this section, we prove that the MV polytope induced by a fundamental one-skeleton path is determined by its crystal graph. We show that the MV polytope induced by a fundamental one-skeleton path is just the Harder--Narasimhan polytope of the preprojective module giving the same polynomial in the semicanonical basis of $\CN$.

\subsection{Crystal graphs for fundamental one-skeleton paths}

By Definition~\ref{def: j-chain}, we can define a preorder $\prec$ on the vertex set $V_j$ of any $j$-chain by letting $y\prec x$ if and only if there is a path from $x$ to $y$ in the $j$-chain $Q_j$. Using Equation~\eqref{eq: w_i-w_k formula} in the proof of Proposition~\ref{prop:inclusion of edges}, $V_j$ has a unique minimal element $-\omega_{n+1-j}$ and a unique maximal element $\omega_j$ under the preorder $\prec$. \par

For any $p\in V_j$, we consider the full subquiver $Q_p$ of $Q_j$ whose vertex set consists of $p$, $-\omega_{n+1-j}$ and all elements $x\in V_j$ such that $-\omega_{n+1-j}\prec x \prec p$. The proof of Theorem~\ref{thm: fundamental MV polytopes} shows that the convex hull of all vertices of $Q_p$ is an MV polytope. \par
 
Given an MV polytope $P$, we use the notation $B_P$ to denote the set \begin{align}
    \{x \in \MV \mid x=f'_{i_1}\cdots f'_{i_m}(P), i_k\in[n]\text{ for all }k\in [m] \}, \nonumber
\end{align}
which has a $G$-crystal structure equipped with the highest weight element $P$.\par

\begin{definition}\label{crystal graph}
    Given a $G$-crystal $B$, the \textbf{crystal graph} associated with $B$ is an $n$-colored quiver with vertex set $B$. There is an $i$-arrow form $b$ to $b'$ if and only if $b'=f_i(b)$ in $B$. 
\end{definition}

Now if we identify any vertex $q$ of $Q_p$ with the MV polytope $\Pol(q)$ in $\MV$, we have the following proposition induced by Theorem~\ref{thm: fundamental MV polytopes}:
\begin{proposition}\label{prop: crystal graph}
    $Q_p$ is the crystal graph associated with $B_{\Pol(p)}$ for any fundamental one-skeleton path $p$.
\end{proposition}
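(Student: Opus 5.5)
The plan is to show that the map $\Pol$ restricted to the vertices of $Q_p$ is an isomorphism of $n$-colored quivers onto the crystal graph of $B_{\Pol(p)}$. The key input is the proof of Theorem~\ref{thm: fundamental MV polytopes}. There, $\Pol(q)$ for $q\in P_f$ arises through Littelmann's path model (Remark~\ref{rem:littelmann operators}, Theorems 4.6 and 4.9 in \cite{BG}, Theorem 8.1 in \cite{ehrig}). In that model the map $q\mapsto \Pol(q)$ intertwines Littelmann's root operators $f_{\alpha_i}$ on paths with Kamnitzer's operators $f'_i$ on MV polytopes, possibly after a uniform twist/normalization that we fix once and for all. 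Concretely, we first record that for $q\in V_j$ and $i\in[n]$,
\[
\Pol(f_i(q))=f'_i(\Pol(q)) \quad\text{whenever } f_i(q)\neq 0,
\]
and that $f'_i(\Pol(q))=0$ inside the sub-crystal $B_{\Pol(p)}$ exactly when $f_i(q)=0$.

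The second step concerns the structure of $Q_p$. The set $\{x\in V_j\mid x\prec p\}$ together with $p$ is closed under all $f_i$. Every $x\prec p$ is reached from $p$ by a word in $\W$, so this set is exactly the $\W$-orbit of $p$, namely the vertices of $Q_p$; the extra vertex $-\omega_{n+1-j}$ is the unique minimum and lies in this orbit. Since $V_j$ is a finite $G$-crystal (Corollary~\ref{coro: G-crystal}), the vertex set of $Q_p$ is precisely $\{w\cdot p\}_{w\in\W,\,w\cdot p\neq0}$, with arrows given by the $f_i$. Applying the intertwining relation, $\Pol$ maps this set onto $\{f'_{i_1}\cdots f'_{i_m}(\Pol(p))\}=B_{\Pol(p)}$ and sends $i$-arrows to $i$-arrows.

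The third step is injectivity, which I expect to be the main obstacle. We need $\Pol(q)\neq\Pol(q')$ for distinct $q,q'$ in $Q_p$. My first attempt is to compare top vertices. Each $\Pol(q)$ is the convex hull of the weights of $Q_q$, and its maximal vertex relative to the minimal vertex $-\omega_{n+1-j}$ is $\wt(q)=q(1)$. Fundamental paths are straight segments determined by their endpoints, so distinct $q$ have distinct weights. Because polytopes are considered up to translation, however, I would compare the difference (maximal vertex minus minimal vertex). This difference is $q+\omega_{n+1-j}$, which is injective in $q$.

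The main subtlety is the normalization: the intertwining in the first step must be checked to be compatible with $\Pol$ defined via the forward orbit. If that turns out to be delicate, the fallback is a weight argument. Arrows in both graphs shift weight by $-\alpha_i$ (Proposition~\ref{prop: crystal structure fpr fundamental paths}), and the crystal $V_j$ is minuscule, so each weight space is one-dimensional. Together with uniqueness of crystal morphisms out of a highest weight element, this forces the bijection.
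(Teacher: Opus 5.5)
Your Steps 2 and 3 are fine. The vertex set of $Q_p$ is the $\W$-orbit of $p$, and $q\mapsto\Pol(q)$ is injective on it because the top-minus-bottom vertex of $\Pol(q)$ is $q+\omega_{n+1-j}$; the paper in fact leaves injectivity implicit.

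The gap is Step 1, which carries the whole content of the proposition. You attribute $\Pol(f_i(q))=f'_i(\Pol(q))$, and the matching of zeros, to the proof of Theorem~\ref{thm: fundamental MV polytopes}. That proof only shows that $\Pol(q)$ is \emph{some} MV polytope. It does not say which element of $\MV$ it is, nor how $f'_i$ acts on it. Your caveat ``possibly after a uniform twist/normalization'' is precisely the unproved point. You would need $q\mapsto\Pol(q)$ to be a crystal embedding of $V_j$ into $\MV$ that is strict for the $f_i$, zeros included.

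The fallback does not supply this. Weight spaces of $\MV$ are not one-dimensional; already in type $A_2$ there are two MV polytopes of weight $\alpha_1+\alpha_2$. So knowing that $f'_i(\Pol(q))$ has weight $\wt(\Pol(q))-\alpha_i$ neither identifies it with $\Pol(f_i(q))$ nor decides whether it vanishes. Also, uniqueness of crystal morphisms out of a generating element gives uniqueness, not existence, of the morphism you need.

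The paper proves the intertwining directly. Writing $p=\sum_k a_k\omega_k$, it first notes that $f'_i(\Pol(p))\neq 0\iff p-\alpha_i\in\Pol(p)\iff a_i=1\iff f_i(p)\neq 0$. It then proceeds as follows:
\begin{itemize}
\item it takes a path in $Q_p$ from $p$ through $f_i(p)$ down to $-\omega_{n+1-j}$, and reads it as an $\mathbf{i}$-Lusztig datum of $\Pol(p)$ for a suitable reduced word $\mathbf{i}$ of $w_0$;
\item deleting the first arrow gives the $\mathbf{i}$-Lusztig datum of $\Pol(f_i(p))$;
\item this is compared with Kamnitzer's description of $f'_i$ on Lusztig data (Proposition 3.4 of \cite{CRMV}), which lowers the first nonzero $i$-entry by one.
\end{itemize}
Uniqueness of the MV polytope with a given Lusztig datum (Theorem~\ref{mvpolytope}) then gives $\Pol(f_i(p))=f'_i(\Pol(p))$.

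To complete your proof, you have two options. You can carry out this Lusztig-datum comparison. Alternatively, replace Step 1 by a precise citation: Baumann--Gaussent's theorem that the Gaussent--Littelmann bijection is a crystal isomorphism, combined with Kamnitzer's identification of the crystal structures on MV cycles and MV polytopes. In that case you must also check explicitly that, in this normalization, the MV polytope attached to $q$ is exactly the convex hull of the weights of the $f$-orbit of $q$, with bottom vertex $-\omega_{n+1-j}$ and top vertex $q$. With either in place, your Steps 2 and 3 finish the argument.
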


\begin{proof}
    Suppose we have a fundamental one-skeleton path $p=\Sigma_{i\in [n]}a_i \omega_i$ in the $j$-chain $Q_j$ for some $j\in [n]$. \par
    We claim that \begin{align}\label{eq: crystal graph and mv polytopes}
        \Pol(f_i(p))=f'_i(\Pol(p)), 
    \end{align}
    for any $i\in [n]$. Here, we use the notation $f'_i$ to represent the crystal operators acting on $\MV$. By the definition of the crystal operators on MV polytopes in \cite{CRMV}, we have $f'_i(\Pol(p))\neq 0$ if and only if $p-\alpha_i\in \Pol(p)$. By Proposition~\ref{prop: crystal structure fpr fundamental paths}, the weight $p-\alpha_i$ is in $Pol(p)$ if and only if $a_i=1$. In this case, $\wt(f'_i(\Pol(p)))=p-\alpha_i=f_i(p)\in \Pol(f_i(p))$. \par
    
    Any path in $Q_p$ starting from $p$ to $-\omega_{n+1-j}$ gives the $\textbf{i}$-Lusztig datum for $\Pol(p)$, where $\mathbf{i}$ is a certain reduced expression of the longest word $w_0$ in the Weyl group $W$. We choose the path $l$ starting from $p$, traversing $f_i(p)$ and ending in $-\omega_{n+1-j}$. We cancel the first arrow $f_j$ in $l$ to obtain a path $l'$ from $f_i(p)$ to $-\omega_{n+1-j}$, which is the $\mathbf{i}$-Lusztig datum for $\Pol(f_i(p))$. Hence the $\textbf{i}$-Lusztig datum for $\Pol(p)$ and $\Pol(f_i(p))$ just differ by 1 in the first nonzero $i$-position of the $\mathbf{i}$-Lusztig datum for $\Pol(p)$.\par
    
    On the other hand, the $\mathbf{i}-$Lusztig datum $f'_i(\Pol(p))$  is given by just subtracting 1 in the first nonzero $i$-position of the $\mathbf{i}$-Lusztig datum for $\Pol(p)$ using Proposition 3.4 in \cite{CRMV}. Therefore we have $\Pol(f_i(p))=f'_i(\Pol(p))$ since they have the same $\mathbf{i}$-Lusztig datum by Theorem~\ref{mvpolytope}. \par
    Equation~\eqref{eq: crystal graph and mv polytopes} implies the isomorphism of quivers between $Q_p$ and the crystal graph of $\Pol(p)$ via the polytopal map. 
\end{proof}

 Proposition~\ref{prop: crystal graph} implies that the operators $e_i$ and $f_i$ defined in Definition~\ref{def: crystal operator on fundamental} and Definition~\ref{def: inverse fundamental operator} are compatible with the crystal operators on $\MV$. Without ambiguity, we also use $\W$ to represent the monoid of words consisting of crystal operators $f'_i$ on the $G$-crystal $\MV$.\par

An MV polytope $P$ is uniquely determined by its vertex set $Ver(P)$. We have $\wt(f_iP)=\wt(P)-\alpha _i$ if $f'_iP\neq 0$. Given a dominant weight $\lambda\in \Lambda_+$, the irreducible representation $V(\lambda)$ of $G$ gives an MV polytope $P(\lambda)$. Using the crystal embedding from $B_{P(\lambda)}$ to $B(\infty)$ induced from Proposition 2.7 in \cite{MVHD}, we have the following proposition:

\begin{proposition}[Proposition 2.7 in \cite{MVHD}]\label{prop:vertex of P}
    For any dominant weight $\lambda\in \Lambda_+$, we have \begin{equation}
        Ver(P(\lambda))=\{\wt(w\cdot P(\lambda))\mid w\in \W\}.
    \end{equation}
\end{proposition}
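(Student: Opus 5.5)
The plan is to read the right-hand side through the extremal elements of the crystal $B_{P(\lambda)}$. Under this reading, the equality reduces to the classical fact that the vertices of $P(\lambda)$ are the $W$-orbit of the highest weight, shifted so that the lowest vertex is $\mathbf 0$.

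Concretely, the operator $w\cdot$ on the highest weight element is understood through Kashiwara's action of the Weyl group on a normal crystal. For a reduced word $s_{i_1}\cdots s_{i_k}$, the element $w\cdot P(\lambda)$ is obtained by applying successively the maximal strings $f_{i}'^{\,\varphi_{i}}$, starting with $i=i_k$ and ending with $i=i_1$. Each such string is a word in the $f'_i$, hence lies in $\W$, and these are the words realizing the action in the statement. This is the one interpretive choice in the proof, and I will state it explicitly at the outset.

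First, I would use the crystal embedding $B_{P(\lambda)}\hookrightarrow B(\infty)$ from \cite{MVHD}. It identifies $B_{P(\lambda)}$ with the crystal $B(\lambda)$ of the irreducible representation $V(\lambda)$, with weights shifted by a fixed translation. Since polytopes are considered up to translation, I normalise so that $P(\lambda)$ has highest weight $\lambda$.

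Second, I would show by induction on the length of $w$ that the extremal element $b_w=w\cdot P(\lambda)$ is nonzero and has weight $w\lambda$. At each step the element $b_{w'}$ is extremal of weight $w'\lambda$. If $\langle w'\lambda,\alpha_i^\vee\rangle\ge 0$, then $\varepsilon_i(b_{w'})=0$ and $\varphi_i(b_{w'})=\langle w'\lambda,\alpha_i^\vee\rangle$. Applying $f_i'^{\,\varphi_i}$ then lowers the weight by $\langle w'\lambda,\alpha_i^\vee\rangle\alpha_i$, which produces $s_iw'\lambda$. This uses only axioms (1) and (3) of Definition~\ref{def:crystal}, together with the fact that $B(\lambda)$ is a normal crystal, so that $\varphi_i$ is the length of the $i$-string. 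Hence
\[
\{\wt(w\cdot P(\lambda))\}=W\lambda .
\]

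Third, I would identify $\operatorname{Ver}(P(\lambda))$ with $W\lambda$. By Theorem~\ref{mvpolytope} and Kamnitzer's description, $P(\lambda)$ is the weight polytope $\Conv(W\lambda)$. This can be confirmed by checking its Lusztig data along every reduced word $\mathbf i$: the path $v_0,\dots,v_N$ of Theorem~\ref{mvpolytope} runs exactly through the points $s_{i_1}\cdots s_{i_j}$ applied to the lowest weight, which are the extremal weights. Since $\lambda$ is dominant, every orbit point $w\lambda$ is a vertex of $\Conv(W\lambda)$. Indeed, a generic linear functional maximised on the chamber $wC$ attains its maximum on $\Conv(W\lambda)$ uniquely at $w\lambda$. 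Conversely, every vertex of a convex hull lies in the generating set. Combining this with the second step gives the equality.

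The main obstacle is the third step, namely pinning down $P(\lambda)$ as exactly $\Conv(W\lambda)$ rather than merely a polytope containing these points. I would first try the Lusztig-datum check along all reduced words. The extremal element $b_{w_0}$ is the lowest weight element, and the chain of maximal strings along any reduced word of $w_0$ gives the vertices $v_j$ of the corresponding $\mathbf i$-path. Every vertex of an MV polytope arises on some such path, so all vertices are extremal weights.
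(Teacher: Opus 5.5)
Your Steps 2 and 3 are sound. However, the ``interpretive choice'' is not a reading of the statement. It is a restriction of it, and it hides the fact that the statement as printed holds in only one direction.

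In the paper $\W$ is the monoid of \emph{all} words in the $f'_i$. So $\{\wt(w\cdot P(\lambda))\mid w\in\W\}$ is the set of weights of every element of $B_{P(\lambda)}$. By your own Step 1, after your normalisation this is the full weight set of $V(\lambda)$, not $W\lambda$. Your argument treats only the extremal elements obtained from maximal strings along reduced words. It therefore proves $Ver(P(\lambda))=\{\wt(\sigma\cdot P(\lambda))\mid \sigma\in W\}$ for Kashiwara's action, and hence only the inclusion $Ver(P(\lambda))\subseteq\{\wt(w\cdot P(\lambda))\mid w\in\W\}$.

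The reverse inclusion is never addressed, and it is false whenever $\lambda$ is not minuscule. Two examples:
\begin{itemize}
\item In type $A_1$ with $\lambda=2\omega_1$, $P(\lambda)$ is the segment from $\mathbf 0$ to $2\alpha_1$. Then $f'_1P(\lambda)\in B_{P(\lambda)}$ is the segment from $\mathbf 0$ to $\alpha_1$, and its weight $\alpha_1$ is not a vertex of $P(\lambda)$.
\item In type $A_2$ with $\lambda=\omega_1+\omega_2$, normalise $P(\lambda)$ to have lowest vertex $\mathbf 0$. The two elements of $B_{P(\lambda)}$ coming from the zero weight space of the adjoint representation have weight $\alpha_1+\alpha_2$. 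This is the centre of the hexagon, not a vertex.
\end{itemize}
With the full monoid $\W$, the equality holds only for $\lambda=0$ or $\lambda=\omega_j$, where every element of $B(\lambda)$ is extremal. That is exactly the situation of the $j$-chains $Q_j$.

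So state the result in the form you actually prove: equality for the Weyl group orbit, which is probably the intended meaning, plus the inclusion for $\W$. That inclusion is all the paper uses afterwards, namely to conclude that $P(\lambda)$ is good. Do not claim that the maximal-string words are ``the words realizing the action in the statement''. In that corrected form your argument is a genuine proof, whereas the paper only points to the crystal embedding $B_{P(\lambda)}\hookrightarrow B(\infty)$.

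Two small repairs are needed in Step 2.
\begin{itemize}
\item The equality $\varepsilon_i(b_{w'})=0$ does not follow from axioms (1), (3) and normality alone. It holds because $w'\lambda+\alpha_i$ is not a weight of $V(\lambda)$: when $\langle w'\lambda,\alpha_i^\vee\rangle\ge 0$, its norm is strictly larger than that of $\lambda$.
\item You should note that this pairing is automatically nonnegative at each step. For a reduced word $s_{i_1}\cdots s_{i_k}$, the root $s_{i_k}\cdots s_{i_{j+1}}(\alpha_{i_j})$ is positive. Otherwise Kashiwara's $S_i$ would require $e'_i$, which is not in $\W$.
\end{itemize}
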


We say that an MV polytope $P$ is an \textbf{MV subpolytope} of another MV polytope $P'$ if $P=w\cdot P'$ for some $w\in \W$. Any MV polytope is an MV subpolytope of $P(\lambda)$ for some $\lambda\in X_+$. \par

\begin{definition}\label{def: good MV}
    An MV polytope $P$ is \textbf{good} if \begin{align}
        Ver(P)\subseteq \{\wt(x)\}_{x\in B_P}. \nonumber
    \end{align}
\end{definition}

We have the following example, which is a corollary of Proposition~\ref{prop:vertex of P}:
\begin{example}\label{example: good P}
     For any dominant weight $\lambda\in X_+$, the MV polytope $P(\lambda)$ is good.
\end{example}

\begin{example}\label{example: bad P}
    We consider the following MV polytope $P$:\par
    \begin{tikzpicture}
\draw (-1,0) node[above] {$X_3$}
  -- (1,0) node[above] {$X_2$}
  -- ({1/2},{-sqrt(3)/2}) node[below right] {$X_1$}
  -- ({-1/2},{-sqrt(3)/2}) node[below left] {$0$}
  -- cycle;
\end{tikzpicture},\par
where $X_1=\alpha_1$, $X_2=2\alpha_1+\alpha_2$ and $X_3=\alpha_2$. This is the MV polytope of type $A_2$. The set $\{\wt(x)\}_{x\in B_P}$ is \begin{align}
    \{0, \alpha_2, \alpha_1+\alpha_2, 2\alpha_1+\alpha_2\}. \nonumber
\end{align}
The vertex $X_1$ of $P$ is not in this set. Thus $P$ is not good.
\end{example}

We have the following corollary of Proposition~\ref{prop: crystal graph}: 
\begin{corollary}\label{coro: fundamental mv are good}
    For any fundamental one-skeleton path $p$, the MV polytope $\Pol(p)$ is good.
\end{corollary}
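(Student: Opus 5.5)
To prove Corollary~\ref{coro: fundamental mv are good}, I would combine Proposition~\ref{prop: crystal graph} with the definition of $\Pol$. By Proposition~\ref{prop: crystal graph}, the crystal graph of $B_{\Pol(p)}$ is identified with $Q_p$ via $q\mapsto \Pol(q)$. In particular, every element $x\in B_{\Pol(p)}$ has the form $x=\Pol(w(p))$ for some $w\in\W$ with $w(p)\neq 0$. Iterating Equation~\eqref{eq: crystal graph and mv polytopes}, together with $\wt(f'_iP)=\wt(P)-\alpha_i$ and Proposition~\ref{prop: crystal structure fpr fundamental paths}, we get $\wt(x)=\wt(w(p))$. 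All polytopes here are taken with the normalisation of the paper, in which the weight of $\Pol(p)$ is the endpoint $p(1)$ rather than the MV normalisation with minimal vertex $\mathbf 0$. Hence
\[
\{\wt(x)\}_{x\in B_{\Pol(p)}}=\{\wt(w(p))\}_{w\in\W,\,w(p)\neq 0}.
\]

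Next, by Definition~\ref{def: polytopal map}, $\Pol(p)$ is the convex hull of exactly this finite set of points. A standard fact about convex hulls of finite sets is that every vertex of $\Conv(S)$ lies in $S$: a vertex is an extreme point, so it cannot be a nontrivial convex combination of points of $S$. Therefore
\[
Ver(\Pol(p))\subseteq\{\wt(w(p))\}_{w(p)\neq0}=\{\wt(x)\}_{x\in B_{\Pol(p)}},
\]
which is precisely the goodness condition of Definition~\ref{def: good MV}.

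The only delicate point is the bookkeeping in the first step. One must check that the bijection $Q_p\cong B_{\Pol(p)}$ is compatible with weights after the translation that places the minimal vertex at $\mathbf 0$. This holds because both sides move by the same simple roots along corresponding arrows: $f_i$ lowers the path weight by $\alpha_i$ (Proposition~\ref{prop: crystal structure fpr fundamental paths}), and $f'_i$ lowers the polytope weight by $\alpha_i$. So once the top elements are matched, the weight sets agree up to a common translation, and goodness is translation invariant.
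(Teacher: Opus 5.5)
Your proof is correct and follows the paper's argument. The vertices of $\Pol(p)$ lie among the endpoints $\wt(w(p))$, which are the vertices of $Q_p$, and Proposition~\ref{prop: crystal graph} identifies these with the weights of elements of $B_{\Pol(p)}$. Your remarks on extreme points of a convex hull and on the weight normalisation only spell out what the paper's two-line proof leaves implicit.
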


\begin{proof}
   For any fundamental one-skeleton path $p$, the vertex set of the MV polytope $\Pol(p)$ is contained in the vertex set of $Q_p$ by Definition~\ref{crystal graph}. By Proposition~\ref{prop: crystal graph}, the vertex set of $Q_p$ is contained in the set $\{\wt(x)\}_{x\in B_{\Pol(p)}}$. 
\end{proof}

\subsection{1-filtered indecomposable modules}\label{subscetion: 1-filtered indecompsable}

We denote the preprojective algebra associated with $SL_{n+1}(\mathbb{C})$ by $\Lambda_{n}$ \cite{proj1}. The irreducible components of the representation variety of $\Lambda_n$ have a $G$-crystal structure $B(\infty)$ by results in \cite{LUSZSEMI}. A more explicit description is given in \cite{prpo}. We recall the basic definitions of the preprojective algebra in \cite{prpo} and \cite{proj1}.

For $n\ge 2$, let $\tau_n$ be the Dynkin quiver 
\[
\xymatrix{
1 & 2 \ar[l]_{\alpha_1} & \cdots \ar[l]_{\alpha_2} & n \ar[l]_{\alpha_{n-1}}}
\]
of type $A_n$.
Thus
$
\Lambda_n = \mathbb{C} \overline{\tau}_n/J_n
$
where the double quiver $\overline{\tau}_n$ of $\tau_n$ is
\[
\xymatrix{
1\ar@<1ex>[r]^{\alpha^*_1}&\ar@<1ex>[l]^{\alpha_1} 2 
\ar@<1ex>[r]^{\alpha^*_2}&\ar@<1ex>[l]^{\alpha_2} 
\ar@<1ex>[r]^{\alpha^*_{n-1}} 
\cdots & n \ar@<1ex>[l]^{\alpha_{n-1}},
}
\]
and the ideal $J_n$ is generated by  
\begin{align}
\alpha_1\alpha^*_1,\quad \alpha^*_{n-1}\alpha_{n-1},\quad
\alpha^*_i\alpha_i - \alpha_{i+1}\alpha^*_{i+1},\quad (1 \leq i \leq n-2). \nonumber
\end{align}

Any $\Lambda_n$-module $M$ gives a dimension vector $\underline{\dim}M$. For each vertex $i\in [n]$, denote by $M_i$ the vector space assigned to $i$.  
The \emph{dimension vector} of $M$ is defined by the element
\[
    \underline{\dim}M := (\dim M_i)_{i\in I} \in \mathbb{N}^{n}.
\]
We consider the representation variety \[
mod(\Lambda_n,m)=\{M\mid M \text{ is a } \Lambda_n \text{-module and }\underline{\dim}M=m\},
\]
for any $m\in \mathbb{N}^n$. The set of irreducible components of $Rep(\Lambda_n, m)$ is denoted by $\Irr_{\Lambda_n}(m)$. The collection $\Ir=\bigsqcup_{m\in \mathbb{N}^n}\Irr_{\Lambda_n}(m)$ has the $G$-crystal structure
$B(\infty)$ by the following parameterization. \par
For any $\Lambda_n$-module $M$, we define its 
\textbf{Harder--Narasimhan (HN) polytope} to be
\[
  \operatorname{HN}(M) := \Conv\{\underline{\dim} N \mid N \subseteq M \text{ is a submodule}\}.
\]
If we identify $\mathbb{N}^n$ with $\mathbb{N}\alpha_1\oplus \cdots \oplus \mathbb{N}\alpha_n$, the HN polytope can be viewed as a polytope lying in $X$. 
\begin{theorem}[Theorem 11.3 in \cite{MVHD}]\label{thm: HN and MV}
There exists a bijection $\beta$ from $\Ir$ to $\MV$ satisfying the following condition: Let $Y\in \Ir$ be an irreducible component. For a general point $M$ in $Y$, $\operatorname{HN}(M)=\beta(Y)$. 
\end{theorem}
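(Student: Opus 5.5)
This statement is cited from \cite{MVHD} (Theorem 11.3), so the plan is to reconstruct its proof along the lines of Baumann--Kamnitzer--Tingley rather than from the path model.

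\emph{Step 1: the candidate map.} For an irreducible component $Y\in\Irr_{\Lambda_n}(m)$, the function $M\mapsto \operatorname{HN}(M)$ is constructible on $Y$ and takes finitely many values. The dimension vectors of submodules of $M$ are controlled by the quiver Grassmannians $\operatorname{Gr}_d(M)$. For each $d$, the condition $\operatorname{Gr}_d(M)\neq\emptyset$ is closed, so on a dense open subset of $Y$ the set $\{d : \operatorname{Gr}_d(M)\ne\emptyset\}$ is constant. I define $\beta(Y)$ to be the HN polytope at a general point $M\in Y$.

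\emph{Step 2: $\beta(Y)$ is an MV polytope, via tropical Plücker relations.} I would use Kamnitzer's characterization of MV polytopes by their vertices $\mu_w$, $w\in W$, together with the edge lengths satisfying the tropical Plücker relations. For a chamber weight $\gamma = w\omega_i$, set
\[
D_\gamma(M)=\min_{N\subseteq M}\langle \gamma,\underline{\dim}N\rangle .
\]
These are exactly the support numbers of $\operatorname{HN}(M)$. For general $M$, $D_\gamma(M)$ equals the generic value of the torsion-pair / $\operatorname{Hom}$-$\operatorname{Ext}$ functional attached to $\gamma$, computed through the submodules $I_w M$ built from the ideals $I_w$ of $\Lambda_n$ (as in Geiss--Leclerc--Schröer). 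The tropical Plücker relation for a rank-two subsystem reduces to a rank-two statement about generic modules, which I would check directly using the $A_2$ preprojective algebra.

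\emph{Step 3: compatibility with Lusztig data and the crystal.} Fix a reduced word $\mathbf{i}$ of $w_0$. The filtration of $M$ by the $I_{w}$ for the prefixes $w$ of $\mathbf{i}$ has subquotients that are direct sums of the corresponding layer modules. Their multiplicities at a general point are the Lusztig datum of $Y$ in Lusztig's parametrization, and these multiplicities match the $\mathbf{i}$-Lusztig datum of $\operatorname{HN}(M)$. By Theorem~\ref{mvpolytope}(2), $\beta$ then agrees with Lusztig's bijection $\Ir\to B(\infty)\to\MV$, so $\beta$ is a bijection compatible with the crystal structure.

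\emph{Main obstacle.} The hard step is the genericity assertion: showing that at a general point the vertex of $\operatorname{HN}(M)$ in each direction $w$ is attained by the canonical submodule $I_wM$, with dimension vector equal to the value predicted by the Lusztig datum. I would try to prove this by semicontinuity of $\dim\operatorname{Hom}$ and $\dim\operatorname{Ext}^1$ on $Y$, combined with the Crawley-Boevey formula for preprojective algebras,
\[
\dim\operatorname{Ext}^1(M,N)=\dim\operatorname{Hom}(M,N)+\dim\operatorname{Hom}(N,M)-(\underline{\dim}M,\underline{\dim}N).
\]
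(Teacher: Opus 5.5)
The paper gives no proof of this statement (it is quoted from \cite{MVHD}), so your outline has to stand on its own. As written it has a genuine gap in Step~2, which is where the whole content of the theorem sits.

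Step~1 is fine, and so is your endgame. Once you know $\operatorname{HN}(M)$ is an MV polytope for general $M$, a bijection $\Ir\to\mathbb{N}^N$ given by one $\mathbf{i}$-Lusztig datum, combined with Theorem~\ref{mvpolytope}(2), yields bijectivity. But the sentence ``the tropical Pl\"ucker relation reduces to a rank-two statement about generic modules'' hides two nontrivial inputs:
(a) the $2$-face of $\operatorname{HN}(M)$ attached to $(w;i,j)$ is the HN polytope of a canonical subquotient of $M$, and the category of such subquotients is equivalent to modules over a rank-two preprojective algebra;
(b) when $M$ is general in $Y$, this subquotient is general in an irreducible component of the rank-two nilpotent variety.

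You give no argument for (b), and it cannot be skipped. Over $\Lambda_2$, take $S_1\oplus S_2$ and the uniserial module with top $S_1$ and socle $S_2$. They lie in the same irreducible component, and they have the same layer multiplicities $(1,0,1)$ for the torsion filtration attached to the word $(1,2,1)$. Yet $\operatorname{HN}(S_1\oplus S_2)=\Conv\{0,\alpha_1,\alpha_2,\alpha_1+\alpha_2\}$ is not an MV polytope. So being MV holds only at general points, and a direct check over $\Lambda_2$ proves nothing until genericity has been transported to the subquotient. In Baumann--Kamnitzer's argument, genericity is controlled instead by reflection functors on preprojective modules. These carry general points of a component to general points of its Saito reflection.

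The obstacle you single out is also misplaced. The vertex of $\operatorname{HN}(M)$ attached to a Weyl chamber is the dimension vector of the torsion part of $M$ for the corresponding torsion class $\operatorname{Fac}(I_w)$. That torsion part is the trace of $I_w$ in $M$, which can be strictly larger than $I_wM$: for $M=S_1\oplus S_2$ over $\Lambda_2$ and $\ell(w)=2$, one has $I_wM=0$ while the torsion part is simple. This vertex description holds for \emph{every} $M$, with no genericity involved. By Crawley-Boevey's formula a brick $B$ satisfies $\dim\operatorname{Ext}^1(B,B)=2-(d,d)\geq 0$, so $d=\underline{\dim}\,B$ is a root. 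Hence every edge of every HN polytope is parallel to a root, and the torsion class of a generic stability condition is constant on each Weyl chamber.

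Semicontinuity and Crawley-Boevey's formula do help, but elsewhere: in dimension counts on the strata of $Y$ defined by filtration or HN type. Such counts can replace the unspecified ``Lusztig parametrization'' in Step~3; Lusztig's geometric parametrization via conormal bundles only involves words adapted to an orientation. They are also the kind of argument that (b) requires. As written, both the identification of the generic multiplicities with a Lusztig datum and the genericity of the rank-two subquotients are asserted, not proved.
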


Now we show that MV polytopes given by fundamental one-skeleton paths are HN polytopes given by certain indecomposable modules of $\Lambda_n$. We use the Galois cover of $\tau_n$ to study a special family of indecomposable modules of $\Lambda_n$. Here we recall some notations about Galois covers in \cite{proj1}. \par
Let 
$
\overline{\Lambda}_n = \mathbb{C} \overline{\tau}_n/\widetilde{J}_n
$
where $\overline {\tau}_n$ is the quiver with vertices
$\{ i_j \mid 1 \leq i \leq n, j \in \mathbb{Z} \}$
and arrows 
\[
\alpha_{i,j}: (i+1)_j \to i_j,\quad \alpha^*_{i,j}: i_j \to (i+1)_{j-1},\quad
(1 \leq i \leq n-1, j \in \mathbb{Z}),
\]
and the ideal $\widetilde{J}_n$ is generated by
\[
  \alpha_{1,j}\alpha^*_{1,j+1},\quad \alpha^*_{n-1,j}\alpha_{n-1,j},\quad 
\alpha^*_{i,j}\alpha_{i,j} - \alpha_{i+1,j-1}\alpha^*_{i+1,j},\quad
(1 \leq i \leq n-2, j \in \mathbb{Z}).
\]

The group $\Z$ acts on $\overline{\Lambda}_n$ by $\C$-linear automorphisms via
\[
z \cdot i_j = i_{j+z},\qquad
z \cdot \alpha_{ij} = \alpha_{i,j+z},\qquad
z \cdot \alpha^*_{ij} = \alpha^*_{i,j+z}.
\]
This induces an action 
\[
\Z \times \md(\overline{\Lambda}_n) \longrightarrow \md(\Lambda_n)
\]
\[
(z,M) \mapsto {^{(z)}M},
\]
where ${^{(z)}M}$ denotes the $\overline{\Lambda}_n$-module obtained from
$M$ by translating $M$ $z$ levels upwards. 

If we consider $\overline{\Lambda}_n$ and $\Lambda_n$ as locally bounded categories 
we have a functor $\F: \overline{\Lambda}_n \longrightarrow \Lambda_n$
defined by 
\[
i_j \mapsto i,\qquad 
\alpha_{ij} \mapsto \alpha_i,\qquad 
\alpha^*_{ij} \mapsto \alpha^*_i.
\]
This is a Galois covering of $\Lambda_n$ with Galois group $\Z$ \cite{proj1}.
It provides us with the push-down functor
\[
\md(\overline{\Lambda}_n) \longrightarrow \md(\Lambda_n),
\]
which we also denote by $\F$. It is defined as follows:\par

Let $x \in \md(\overline{\Lambda}_n,\mathbf{V})$ be a $\overline{\Lambda}_n$-module with 
underlying graded vector space $\mathbf{V}=\bigoplus_{i,j}V_{i_j}$. 
Then $\F(x)$ has the same underlying vector space with the grading $\mathbf{V} = \bigoplus_i V_i$ where
$V_i=\bigoplus_j V_{i_j}$,
and $\F(x)$ has maps
$\F(x)_{\alpha_i} = \bigoplus_j\, x_{\alpha_{ij}}$ and
$\F(x)_{\alpha^*_i}=\bigoplus_j\, x_{\alpha^*_{ij}}$.\par

Now we can define 1-filtered indecomposable modules of $\Lambda_n$ as follows:

\begin{definition}\label{def: 1-filtered indecomposable}
    An indecomposable module $M$ of $\Lambda_n$ is \textbf{1-filtered} if $M=\F(\widetilde{M})$ for some indecomposable $\widetilde{M} \in \md(\overline{\Lambda}_n)$ satisfying $M_{i_j}\cong \C$ for all nonzero $V_{i_j}$, $M_{i,j}\cong \id_{\C}$ if $M_{i_j}\cong M_{(i+1)_{j}}\cong \C$ and $M^*_{i,j}\cong \id _{\C}$ if $M_{i_j}\cong M_{(i+1)_{j-1}}\cong \C$.
\end{definition}

\begin{example}\label{example: projective are 1-filtered}
    For any $k\in [n]$, the projective indecomposable module $P_k$ of $\Lambda_n$ is 1-filtered. We can construct a  $\overline{\Lambda}_n$-module $\widetilde{P}_k$ by setting 
    
    \[\dim(\widetilde{P}_k)_{i_j}=\text{number of equivalence classes of paths of length j from k to i in } Q_n, \]
    where the equivalence relations of paths are induced by generators of $J$. We have $\F(\widetilde{P}_k)=P_k$.
\end{example}

We consider the set of 1-filtered indecomposable modules that are submodules of projective indecomposable modules, which is denoted by $Ind_1(\Lambda_n)$.

\begin{proposition}\label{prop: fundamental are 1-filtered}
    For any fundamental one-skeleton path $p\in P_f$, we have \[\Pol(p)\in \{\operatorname{HN}(M) \mid M\in Ind_1(\Lambda_n)\}.\]
\end{proposition}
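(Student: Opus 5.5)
We will construct, for each fundamental one-skeleton path $p\in V_j$, an explicit module $M_p\in Ind_1(\Lambda_n)$ and show $\operatorname{HN}(M_p)=\Pol(p)$ up to translation. By Equation~\eqref{eq: explicit formula for A_n crystal action}, $p$ corresponds to a subset $S\subseteq\{1,\dots,n+1\}$ with $|S|=j$. The top vertex $\omega_j$ corresponds to $S_0=\{1,\dots,j\}$. We set $M_{\omega_j}$ to be the projective indecomposable $P_j$, which is 1-filtered by Example~\ref{example: projective are 1-filtered}. The graded lift $\widetilde P_j$ is a rectangle of one-dimensional spaces in the Galois cover: the $j\times(n+1-j)$ Young-diagram shape, with identity maps along $\alpha$ and $\alpha^*$. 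For a general $p$, obtained from $\omega_j$ by a word $w\in\W$, we take $M_p$ to be the submodule of $P_j$ whose lift is the complement in the rectangle of the Young diagram $\lambda(S)$ recording the inversions moving $S_0$ to $S$. Each application of $f_i$ (legal exactly when $i\in S$, $i+1\notin S$) removes one box of colour $i$. We will check that a removable box is always a source of the remaining shape, so that the remaining shape is a submodule of $\widetilde P_j$. The pushdown $\F$ of a connected skew shape with identity maps is indecomposable, since its endomorphism ring is local. Hence $M_p\in Ind_1(\Lambda_n)$.

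Next we compute $\operatorname{HN}(M_p)$. Submodules of the lifted module $\widetilde M_p$ that are spanned by boxes correspond to order ideals of the shape under the covering order. These are exactly the skew shapes obtained by deleting further boxes in a way compatible with the $f_i$-moves. So the dimension vectors of such graded submodules are precisely $\wt(p)-\wt(w'(p))$ for $w'(p)\neq 0$, that is, the vertex set of $Q_p$ in the notation preceding Proposition~\ref{prop: crystal graph}. After translating so that the minimum is $\mathbf 0$, this gives
\[
\operatorname{Conv}\{\underline{\dim}N \mid N\subseteq M_p \text{ graded, spanned by boxes}\}=\Pol(p).
\]

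It remains to show that arbitrary submodules $N\subseteq M_p$, not necessarily graded or spanned by boxes, give no dimension vectors outside this polytope. We plan a degeneration or torus argument. The $\mathbb C^*$-action induced by the $\mathbb Z$-grading, together with the torus scaling the one-dimensional spaces $M_{i_j}$, acts on the quiver Grassmannian of submodules of a fixed dimension vector. This Grassmannian is projective, so each nonempty one contains a torus-fixed point. Because every $M_{i_j}$ is one-dimensional and the weights separate the boxes, a fixed point is spanned by boxes. Therefore every dimension vector of a submodule already occurs among the box submodules, and $\operatorname{HN}(M_p)=\Pol(p)$.

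If the weight-separation claim fails, we will instead use Theorem~\ref{thm: HN and MV} together with Proposition~\ref{prop: crystal graph}. For this we would show that $M_p$ is a general point of its irreducible component, and that the crystal operators on $\Ir$ correspond to removing a simple top of colour $i$, which matches $f_i$ on $p$. The main obstacle is this final step: controlling all submodules, or equivalently the genericity of $M_p$.
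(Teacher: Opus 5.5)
Your argument is essentially the paper's. The paper identifies the $k$-chain $Q_k$ with the quiver $R_k$ of submodules of $P_k$, where an $i$-arrow removes a simple top $S_i$, and reads off the HN polytope of the submodule attached to $p$. That submodule is exactly your $M_p$, described through the $j\times(n+1-j)$ rectangle.

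Your version is more careful than the paper's sketch. The paper tacitly assumes that every submodule of $P_k$ is a box submodule, and it never checks that the submodule attached to $p$ is indecomposable. Your connectivity argument and your torus argument supply both points.

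The weight separation you worry about does hold, so the fallback via genericity is unnecessary. Each $M_{i_j}$ is at most one-dimensional, so distinct boxes over the same vertex $i$ lie in distinct levels $j$. Hence the grading $\mathbb{C}^*$ alone already suffices. It gives $\alpha$ degree $0$ and $\alpha^*$ degree $-1$, so it acts on every quiver Grassmannian of $M_p$, and its fixed points are box-spanned. Independent rescaling of individual boxes would not preserve the module structure, so drop that part. There is also a more direct argument: a $2$-cycle at vertex $i$ shifts the colour-$i$ boxes of $M_p$ one step along their diagonal. This forces every submodule of $M_p$ to be box-spanned.

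There is one slip to correct. Let $N\subseteq M_p$ be the box submodule corresponding to $w'(p)$. Then $\underline{\dim}N=\underline{\dim}M_p-\wt(p)+\wt(w'(p))$. The vector $\wt(p)-\wt(w'(p))$ that you wrote is $\underline{\dim}(M_p/N)$ instead. Its convex hull is $\wt(p)-\Pol(p)$, the point reflection of $\Pol(p)$, which is in general a different MV polytope. For example, take $p=\omega_1$ in type $A_2$:
\begin{itemize}
\item the reflected set gives the triangle with vertices $0,\alpha_1,\alpha_1+\alpha_2$, a translate of $\Pol(\omega_2)$;
\item $\operatorname{HN}(P_1)$ and $\Pol(\omega_1)$ are, up to translation, the triangle with vertices $0,\alpha_2,\alpha_1+\alpha_2$.
\end{itemize}
With the correct formula, the submodule dimension vectors form a translate of $\{\wt(w'(p))\}$, and your conclusion follows.

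Finally, for the lowest element $p=-\omega_{n+1-j}$ of $V_j$, your construction (like the paper's) gives $M_p=0$. The zero module is not indecomposable, so the statement only holds there if the one-point polytope is treated separately.
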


\begin{proof}
We consider the composition series of $P_k$. The composition series of $P_k$ is induced from the composition series of $\widetilde{P}_k$ via the functor $F$. We define an  $n$-colored quiver $R_k$ whose vertices are submodules of $P_k$. There is an $i$-colored arrow from $M$ to $M'$ if and only if the quotient $M/M'$ is the simple module $S_i$ for any $i\in [n]$. By Example~\ref{example: projective are 1-filtered}, we have $R_k$ is isomorphic to the $k$-chain $Q_k$ as an $n$-colored quiver. Thus we have $\Pol(\omega_i)=\operatorname{HN}(P_i)$ for any $i\in [n]$. \par
Given a $1$-filtered indecomposable module $M\in Ind_1(\Lambda_n)$, we suppose that $M$ is a submodule of $P_k$ for some $k\in [n]$. There exist $j_1,\cdots ,j_m$ in $[n]$ and $\Pi_n$-modules $M_{j_i}\in Ind_1(\Lambda_n)$ such that $P_k/M_{j_i}\cong S_{j_i}$ for any $i\in[m]$ and $M_{j_m}\cong M$. We have $\operatorname{HN}(M)=\Pol(f_{j_1}\cdots f_{j_m}\omega_k)$ using the quiver isomorphism between $R_k$ and $Q_k$.
\end{proof}

\section{Fundamental one-skeleton paths and initial cluster variables of \texorpdfstring{$\CN$}{CN}}\label{sec: cluster variables}

In this section, we construct the correspondence between cluster variables in the initial seed of $\CN$ and oriented edges of the fundamental alcove via MV polytopes. Using one-skeleton paths, we have a geometric interpretation of initial cluster variables and certain mutation relations.

\subsection{Initial seed of the cluster algebra \texorpdfstring{$\CN$}{CN}}

We recall results in the work by Geiss, Leclerc, and Schr\"{o}er on the cluster algebra structure of $\CN$. Suppose that $G$ is of type $A_n$. The longest word $w_0$ in the Weyl group $W$ admits an expression $\mathbf{i}=(i_{j})_{1\leq j\leq \frac{(n+1)n}{2}}=(1,\cdots,n,1,\cdots,n-1,\cdots,1,2,1)$.\par

We add $n$ additional letters $i_{-n}, \ldots, i_{-1}$ at the beginning of $\mathbf{i}$, where $i_{-j} = -j$, and obtain an $(\rn+n)$-tuple
\[
\bar{\mathbf{i}}=(i_{-n}, \ldots, i_{-1}, i_{1}, \ldots, i_{\rn}) = (-n, \ldots, -1, 1, \cdots, n,\cdots,1).
\]
For $k\in [-n, -1] \cup [1, \rn]$, let
\[
k^+ = \begin{cases}
    \rn+1 & \text{if } |i_{l}| \neq |i_{k}| \text{ for all } l > k, \\
    \min\{l \mid l > k \text{ and } |i_l| = |i_k|\} & \text{otherwise}.
\end{cases}
\]

Then the index $k$ is called $\mathbf{i}$-\textit{exchangeable} if $k$ and $k^+$ are in $[1, \rn]$. Let $e(\In) \subset [1, \rn]$ be the set of $\In$-exchangeable elements. One easily checks that $e(\In)$ contains $\rn -n$ elements. In fact, $k$ is $\In$-exchangeable if and only if $k\notin \{-n,\cdots,-1\}\cup\{\frac{(n+l)(n-l+1)}{2}\}_{1\leq l\leq n}$.

Next, one defines the exchange quiver $\tilde{Q_{\In}}$ with set of vertices $[-n, -1] \cup [1, \rn]$. Assume that $k$ and $l$ are indices such that the following two conditions hold:
\begin{itemize}
    \item $k < l$;
    \item $\{k, l\} \cap e(\In) \neq \emptyset$.
\end{itemize}

There is an arrow $k \to l$ in $\tilde{Q_{\In}}$ if and only if $k^+ = l$, and there is an arrow $l \to k$ if and only if $l < k^+ < l^+$ and $a_{|i_{k}|, |i_{l}|} = -1$. Here, $(a_{i,j})_{1 \leq i,j \leq n}$ denotes the Cartan matrix of $W$. We denote the exchange matrix associated with the quiver $\tilde{Q_{\In}}$ by $\tilde{A_{\In}}$. The following theorem recalls results about the initial seed in \cite{initial1}.

\begin{theorem}[Section 4 \cite{initial1}]\label{thm: initial seed functions}
We have
\begin{align}
\{ (\Delta(k, \In))_{k \in \bar{\In}}, \tilde{A_{\In}}\} \nonumber
\end{align}
is the initial seed for $\mathbb{C}[B \cap B_{-}\bar{w}_0B_-]$. The mutable variables are those $\Delta(k, \In)$ with $k \in e(\In)$. Here $\bar{w}_0$ is a lift of $w_0$ in $G$ and $\Delta(k, \In)$ are the generalized minors.\par

Moreover, if we remove the variables in the initial seed corresponding to the $n$ non-$\In$-exchangeable elements in $\{1, \ldots, \rn\}$, remove the corresponding vertices in $\tilde{A}_{\In}$, and restrict the generalized minors $\Delta(k,\In)$ to $N$, then we obtain an initial seed for $\mathbb{C}[N]$. In particular, each cluster has $\rn$ cluster variables, $n$ of them being frozen.
\end{theorem}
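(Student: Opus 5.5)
Since this statement is quoted from Geiss, Leclerc and Schr\"oer \cite{initial1}, I would not rebuild their categorical machinery. Instead I would reduce it to the Berenstein--Fomin--Zelevinsky description of double Bruhat cells and then pass from the unipotent cell to $N$.

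\emph{Step 1: the seed on the big cell.} Identify $B\cap B_-\bar w_0B_-$ with the reduced double Bruhat cell of $G^{e,w_0}=B\cap B_-w_0B_-$. For the word $\bar{\In}$, with the $n$ negative letters $-n,\dots,-1$ standing for the trivial part $u=e$ and the positive letters giving the reduced word $\In$ of $w_0$, I would invoke the BFZ theorem. It says that the chamber minors $\Delta(k,\In)=\Delta_{\omega_{|i_k|},\,s_{i_1}\cdots s_{i_k}\omega_{|i_k|}}$ for $k\in\bar{\In}$, together with the matrix $\tilde A_{\In}$, form a seed whose upper cluster algebra equals $\mathbb C[G^{e,w_0}]$. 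I would then check that the quiver rules stated above ($k\to k^+$, and $l\to k$ when $l<k^+<l^+$ and $a_{|i_k|,|i_l|}=-1$) are exactly the BFZ rules specialised to $u=e$. I would also check that the exchangeable indices are those $k$ with $k,k^+\in[1,\rn]$. For the specific word $(1,\dots,n,1,\dots,n-1,\dots,1)$, the last occurrence of each letter sits at position $\frac{(n+l)(n-l+1)}{2}$, which gives $|e(\In)|=\rn-n$.

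\emph{Step 2: restriction to $N$.} The minors attached to the negative letters $k=-j$ are $\Delta_{\omega_j,\omega_j}$. These restrict to the constant $1$ on $N$, so dropping them costs nothing. The minors $\Delta(k,\In)$ for the $n$ non-exchangeable positive indices $k=\frac{(n+l)(n-l+1)}{2}$ restrict to the flag minors $\Delta_{\omega_j,w_0\omega_j}$. These cut out the complement of the open cell $N^{w_0}=N\cap B_-w_0B_-$, so $\mathbb C[N^{w_0}]=\mathbb C[N][\Delta_{\omega_j,w_0\omega_j}^{-1}]$. The restricted seed therefore generates $\mathbb C[N^{w_0}]$ once these frozen variables are inverted. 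The count of cluster variables is then $\rn$, of which $n$ are frozen.

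\emph{Step 3: the main obstacle.} The hardest step is showing that $\mathbb C[N]$ itself, rather than its localisation, is the cluster algebra with these frozen variables left non-invertible. My first approach would be the Starfish lemma argument. I would show that every initial variable and every once-mutated variable is a regular function on $N$. These mutations are the classical three-term Pl\"ucker-type minor identities, which are polynomial. I would also show that distinct initial variables are coprime in the factorial ring $\mathbb C[N]$. This gives $\mathbb C[N]\subseteq$ upper cluster algebra $\subseteq\mathbb C[N]$, where the upper bound uses a codimension-two extension across the divisors $\{\Delta_{\omega_j,w_0\omega_j}=0\}$. Equality with the cluster algebra itself I would take from the GLS categorification \cite{initial1}: the rigid modules over $\Lambda_n$ realise every cluster monomial inside the dual semicanonical basis, and that basis spans $\mathbb C[N]$.
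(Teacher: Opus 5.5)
The paper gives no proof of this statement; it quotes it from Geiss--Leclerc--Schr\"oer, and that result rests on Berenstein--Fomin--Zelevinsky, so reducing to BFZ is the natural route. Two steps, however, do not hold as written.

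\emph{The choice of chamber minors.} Your formula $\Delta(k,\In)=\Delta_{\omega_{|i_k|},\,s_{i_1}\cdots s_{i_k}\omega_{|i_k|}}$ is not the BFZ convention. For $u=e$, BFZ use $\Delta(k;\In)=\Delta_{\omega_{|i_k|},\,v_{>k}^{-1}\omega_{|i_k|}}$ with $v_{>k}=s_{i_{k+1}}\cdots s_{i_m}$ and $m=\rn$. With this convention $\Delta(-j;\In)=\Delta_{\omega_j,w_0\omega_j}$, while the last occurrence $k$ of the letter $j$ gives $\Delta_{\omega_j,\omega_j}$, which is $1$ on $N$. That is why the statement discards the $n$ non-exchangeable indices in $[1,\rn]$ and keeps $[-n,-1]$ as the frozen block; the paper says so right after the theorem. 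Your Step~2 asserts the opposite and drops the negative indices. So you establish the variant with the two frozen blocks interchanged, not the statement. Also, BFZ's theorem concerns their own minors, so it does not directly cover the seed you write down in Step~1. With your minors, the removal the statement prescribes genuinely fails. For $n=2$ it deletes the nonconstant frozen variables $x_{1,2}x_{2,3}-x_{1,3}$ and $x_{1,3}$ and keeps two frozen variables equal to $1$. The exchange relation at $k=1$ then becomes $x_{1,2}\,x_1'=2$. Switching to the BFZ minors repairs this, and Step~2 then runs with the roles of the two blocks reversed.

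\emph{Equality with the cluster algebra.} The end of Step~3 does not give what you need. Knowing that every cluster monomial lies in the dual semicanonical basis, and that this basis spans $\mathbb C[N]$, yields only $\mathcal A\subseteq\mathbb C[N]$ and linear independence of cluster monomials. Nothing shows that the basis vectors attached to non-rigid components (which exist for $n\geq 5$) lie in $\mathcal A$, so $\mathbb C[N]\subseteq\mathcal A$ does not follow. Likewise, the Starfish lemma gives only $\overline{\mathcal A}\subseteq\mathbb C[N]$. No codimension-two argument can produce the reverse inclusion across the loci $\{\Delta_{\omega_j,w_0\omega_j}=0\}$, because these are divisors. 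Relatedly, BFZ identify $\mathbb C[G^{e,w_0}]$ with an \emph{upper} cluster algebra, so ``generates'' in Step~2 is premature.

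The missing ingredient is algebra generation:
\begin{itemize}
\item $\mathbb C[N]=\mathbb C[x_{i,j}]$, and each $x_{i,j}$ is the restricted flag minor $\Delta_{\omega_i,\,s_{j-1}\cdots s_i\omega_i}|_N$.
\item Every restricted flag minor is a chamber minor for some reduced word of $w_0$.
\item BFZ show that the seeds of two reduced words are related by mutations and relabellings: $3$-moves are single mutations and $2$-moves only relabel.
\end{itemize}
Hence $\mathbb C[N]\subseteq\mathcal A\subseteq\overline{\mathcal A}\subseteq\mathbb C[N]$, where the last inclusion is your Starfish step. This gives the equality with no appeal to the categorification.
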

Before the restriction to \(N\), the seed for
\(\mathbb C[B\cap B^-\bar{w}_0B^-]\) has \(2n\) frozen variables, indexed by
the set 
$[-n,-1]\cup\{k\in[1,\frac{n(n+1)}{2}]\mid k^+=\frac{n(n+1)}{2}+1\}.$
After restriction to \(N\), the variables indexed by \(\{k\in[1,\frac{n(n+1)}{2}]\mid k^+=\frac{n(n+1)}{2}+1\}\)
become equal to \(1\) and are removed. We denote the initial seed of $\CN$ by $\{ (\Delta^r (k, \In))_{k \in e(\In) \cup [-n,-1]}, \tilde{A^r_{\In}}\}$. We now give the connection between the $\rn$ initial cluster variables and certain fundamental one-skeleton paths.

\subsection{Initial seed in fundamental one-skeleton paths}\label{subsection: initial seeds}
Each initial cluster variable corresponds to an MV polytope, as shown by the following theorem.
\begin{theorem}[Lemma 4.5.1 \cite{initial1}]\label{thm: crystal sequence for initial seeds}
    The cluster monomials are in the semicanonical basis of $\CN$. In particular, the cluster variables in the initial seed $\{ (\Delta^r (k, \In))_{k \in [-n,-1]\cup e(\In)}, \tilde{A^r_{\In}}\}$ are determined by the following equality:
    \begin{align}\label{eq: general seed formula}
        1=\begin{cases}
        \tilde{f}^{max}_{i_\rn}\cdots \tilde{f}^{max}_{i_1}(\Delta^r (k, \In)) & \textit{ if }
k\in [-n,-1],\\
 \tilde{f}^{max}_{i_\rn}\cdots \tilde{f}^{max}_{i_{k+1}}(\Delta^r (k, \In)) & \textit{ if }
k\in e(\In),
        \end{cases}
    \end{align}
    where $\tilde{f}_i$ are crystal operators for the set of semicanonical basis, which is isomorphic to $B(\infty)$. $\tilde{f}^{max}_i(v)$ is defined by $\tilde{f}^{max}_i(v)=\tilde{f}^{m}_i(v)$, where $m$ is the largest integer such that $\tilde{f}^{m}_i(v)\neq 0$. 
\end{theorem}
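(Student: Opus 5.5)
Since this statement is quoted from \cite{initial1}, the proof I have in mind is a reduction to the module-theoretic description of the initial seed there. The crystal identity is then checked through the crystal structure on $\Ir$ described in \cite{prpo}.

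\textbf{Step 1: the initial seed as rigid modules.} First I would recall the dictionary from \cite{proj1}. Each $\Lambda_n$-module $M$ gives a function $\varphi_M\in\CN$. If $M$ is rigid, i.e. $\operatorname{Ext}^1(M,M)=0$, its orbit is open in its irreducible component $Y_M$, so $\varphi_M$ is the semicanonical basis element attached to $Y_M\in\Ir$.

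For the reduced word $\In=(1,\dots,n,1,\dots,n-1,\dots,1,2,1)$, \cite{initial1} constructs modules $V_k$ for $k\in[-n,-1]\cup[1,\rn]$. Each $V_k$ is a submodule of an injective $\Lambda_n$-module, obtained by successively taking maximal submodules whose top is a direct sum of copies of $S_{i_l}$. The first claim to verify is
\[
\varphi_{V_k}=\Delta^r(k,\In).
\]
This follows by comparing the generalized minor with the Euler characteristic of the relevant flag varieties, as done in \cite{initial1}.

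\textbf{Step 2: cluster monomials lie in the semicanonical basis.} The direct sum $V=\bigoplus_k V_k$ is rigid; in fact it is a maximal rigid (cluster-tilting) object. By Step 1, a cluster monomial in the initial seed equals $\varphi_{\bigoplus V_k^{a_k}}$, and this module is again rigid. Hence every initial cluster monomial lies in the semicanonical basis.

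To pass to arbitrary seeds, I would use that mutation of cluster-tilting objects in $\md(\Lambda_n)$ is compatible with mutation of seeds. This compatibility comes from the exchange sequences and the multiplication formula $\varphi_X\varphi_Y=\varphi_{E}+\varphi_{E'}$ when $\dim\operatorname{Ext}^1(X,Y)=1$. Granting it, rigidity is preserved along mutations, so all cluster monomials are in the semicanonical basis.

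\textbf{Step 3: the crystal identity.} On $\Ir$, the operator $\tilde f_i^{\max}$ sends a generic module $M$ to its largest submodule $M'$ such that $M/M'$ is a direct sum of copies of $S_i$. In other words, it removes the entire $S_i$-part of the top.

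By construction, $V_k$ is built as an iterated extension by $S_{i_l}$-isotypic layers in the order dictated by $\In$. So applying $\tilde f^{\max}_{i_{k+1}}$, then the next operator in the sequence, and so on up to $\tilde f^{\max}_{i_\rn}$, strips these layers one at a time and ends at the zero module, whose function is $\varphi_0=1$. For $k\in[-n,-1]$ the same argument applies, except that the whole word $i_1,\dots,i_\rn$ is used, because these $V_k$ are the full injectives (the frozen, projective-injective summands).

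\textbf{Main obstacle.} The hardest point is that $\tilde f_i^{\max}$ is defined on generic points of components. One must check that applying it to the rigid module $V_k$ really produces the module obtained by removing its $S_i$-top, and that this module is again generic in its component. I would handle this via Lusztig's description of $\varepsilon_i$ as the generic dimension of the $S_i$-isotypic part of the top, together with the fact that submodules of rigid modules defined this way are again rigid. Since the paper cites this as Lemma 4.5.1 of \cite{initial1}, in the end I would simply invoke that reference for this verification.
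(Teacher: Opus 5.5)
The paper gives no proof of this statement; it is quoted from \cite{initial1}. Ending with ``invoke that reference'' is therefore the paper's own route, and your Steps 1--2 are a fair outline of how cluster monomials are placed in the dual semicanonical basis there: rigid implies dense orbit implies $\varphi_M$ is a basis element, and mutations are compatible.

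The part of your sketch meant to prove the displayed identities, however, has a real gap. For $k\in e(\mathbf{i})$, the identity says exactly that the generic module of the component of $\Delta^r(k,\mathbf{i})$ is exhausted by removing, in turn, the full $S_{i_{k+1}}$-, $S_{i_{k+2}}$-, \dots, $S_{i_{\rn}}$-isotypic part of what remains. In Step 3 you take this ``by construction'', but Step 1 never produces a module with that layer structure.

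The two sides are indexed differently:
\begin{itemize}
\item The $k$-th summand of the maximal rigid module $V_{\mathbf{i}}$ is cut out of $Q_{i_k}$ by iterated socle/top functors attached to a segment of $\mathbf{i}$ that contains the summand's own position.
\item By contrast, $\Delta(k,\mathbf{i})=\Delta_{\omega_{i_k},\,v_{>k}\omega_{i_k}}$, with $v_{>k}$ the product of the $s_{i_l}$ for $l>k$, depends only on $i_{k+1},\dots,i_{\rn}$.
\end{itemize}
Your dictionary $\varphi_{V_k}=\Delta^r(k,\mathbf{i})$ also cannot hold index by index. $V_{\mathbf{i}}$ has exactly $\rn$ nonzero indecomposable summands, one per position of $\mathbf{i}$; there are no $V_k$ with $k<0$. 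Meanwhile, in the seed of Theorem~\ref{thm: initial seed functions}, the $n$ positive positions with $k^+=\rn+1$ carry minors equal to $1$ on $N$. Once the correct correspondence between summands and minors is written down, the whole content of the lemma is: which letters index the layers, in what order, and whether each $\tilde f^{\max}_{i_l}$ removes exactly one layer. Step 3 only restates this.

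One way to actually get the identity is to work with the minor directly. The weight space of weight $v_{>k}\omega_{i_k}$ in $L(\omega_{i_k})$ is one-dimensional, so $\Delta_{\omega_{i_k},v_{>k}\omega_{i_k}}|_N=\psi_{\omega_{i_k}}(\bar v_{>k}v_{\omega_{i_k}})$ is the basis element labelled by the extremal element of $B(\omega_{i_k})$ of that weight. That element is obtained from the highest weight element by maximal strings along a reduced word of $v_{>k}$, i.e.\ in the letters $i_{k+1},\dots,i_{\rn}$. The embedding $B(\omega_{i_k})\hookrightarrow B(\infty)\otimes T_{\omega_{i_k}}$ preserves $\varepsilon_i$ and the operators moving towards the highest weight element (the paper's $\tilde f_i$). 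Undoing the strings therefore gives the identity, with the order of the letters fixed by which crystal structure on the semicanonical basis is meant.

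Two smaller remarks. First, for $k\in[-n,-1]$ the identity holds for \emph{every} element of $B(\infty)$, because $(i_1,\dots,i_{\rn})$ is a reduced word for $w_0$ (string parametrization; equivalently, every $\Lambda_n$-module is exhausted by stripping along a reduced word of $w_0$). So it has nothing to do with these summands being injective, and the displayed equalities do not ``determine'' the cluster variables; only the equalities themselves can be proved. Second, for your ``main obstacle'', rigidity of the intermediate modules is not needed. The Kashiwara--Saito correspondence sends a dense orbit in a component $Z$ to a dense orbit in $\tilde f_i^{\max}Z$, so genericity propagates along the stripping; for $\Lambda_n$, a dense orbit is equivalent to rigidity anyway.
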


We define $t(k)=\min \{h>k \mid h\notin e(\In)\}$ for any $k\in e(\In)$. We denote the set \begin{align}
    \{x \text{ is in the set of the semicanonical basis} \mid x=f'_{i_1}\cdots f'_{i_m}(y), i_k\in[n]\text{ for all }k\in [m] \}, \nonumber
\end{align}
by $C_y$ for any $y$ in the set of the semicanonical basis. It has a $G$-crystal structure. Using the Lusztig datum of the initial cluster variables and of fundamental one-skeleton paths in the $G$-crystal $B(\infty)$, we have the following theorem:

\begin{theorem}\label{thm: path parametrization}
  We consider the initial seed $\{(\Delta^r (k, \In))_{k \in e(\In) \cup [-n,-1]}, \tilde{A^r_{\In}}\}$ of the cluster algebra $\CN$. The cluster variables $x_t$ are parametrized by the oriented edges of the fundamental alcove $\Delta$: \[
  E=\{\omega_i,\omega_j-\omega_k\}_{1\leq i\leq n,1\leq j<k\leq n}.
  \]
  
  The parameterization is given by a bijection $\beta$ from $(\Delta^r (k, \In))_{k \in e(\In) \cup [-n,-1]}$ to $E$ : \begin{align}\label{eq: bijection}
      \beta(\Delta^r (k, \In))=
      \begin{cases}
          \omega_{n+1-i} & \textit{ if } k=-i<0,\\
         \omega_{i_{t(k)}-i_{k}}-\omega_{i_{t(k)}}  & other wise.
      \end{cases}
  \end{align}
  
  Moreover, $\beta$ induces an isomorphism of crystals between $C_{\Delta^r (k, \In)}$ and $B_{\Pol(\beta(\Delta^r (k, \In)))}$.
 \end{theorem}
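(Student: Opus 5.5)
The plan is to compare, for each initial cluster variable, two crystal-theoretic descriptions of one element of $B(\infty)$: Theorem~\ref{thm: crystal sequence for initial seeds} on the semicanonical side and the $j$-chains $Q_j$ on the path side. Since $\MV$ and the semicanonical basis both realize $B(\infty)$, and $B(\infty)$ has only the identity automorphism, there is a unique crystal isomorphism $\varphi$ between them. It therefore suffices to show that, for each $k\in e(\In)\cup[-n,-1]$, the polytope $\Pol(\beta(\Delta^r(k,\In)))$ is the MV polytope with the same Lusztig datum as $\Delta^r(k,\In)$. Once this identification holds, the isomorphism between $C_{\Delta^r(k,\In)}$ and $B_{\Pol(\beta(\Delta^r(k,\In)))}$ is just the restriction of $\varphi$. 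By Proposition~\ref{prop: crystal graph}, the crystal graph of the latter is $Q_p$ with $p=\beta(\Delta^r(k,\In))$.

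First I check that $\beta$ is a bijection onto $E$. There are $n$ frozen indices in $[-n,-1]$, and they map to the $\omega_i$. The $\rn-n$ exchangeable indices must then map bijectively to the $\binom{n}{2}$ differences $\omega_j-\omega_k$ with $j<k$. For this I use the explicit reduced word $\In=(1,\dots,n,1,\dots,n-1,\dots,1)$. An exchangeable $k$ sits inside a block $(1,\dots,m)$ with $m<n$, not at its last position. Then $t(k)$ is the end of that block, so $i_{t(k)}=m$ and $i_{t(k)}-i_k=m-i_k$ ranges over $1,\dots,m-1$. This gives exactly the pairs $j<k'$ with $j=m-i_k$ and $k'=m$. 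All resulting values lie in $E$, which is contained in $P_f$ by Proposition~\ref{prop:inclusion of edges}.

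Next I compute the Lusztig data. By Theorem~\ref{thm: crystal sequence for initial seeds}, applying $\tilde f^{\max}_{i_\rn}\cdots\tilde f^{\max}_{i_{k+1}}$ (or the full word when $k<0$) sends $\Delta^r(k,\In)$ to the lowest element $1$. Hence its $\In$-Lusztig datum, read in the ordering convention of Theorem~\ref{mvpolytope}(3), is the sequence of maximal string lengths along that word, with zeros in positions $\le k$. On the path side, for $p\in P_f$ every weight $\langle p,\alpha_i^\vee\rangle$ lies in $\{-1,0,1\}$, and each $f_i$ acts as $s_i$ by Proposition~\ref{prop: crystal structure fpr fundamental paths}. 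So applying $f^{\max}_{i_\rn}\cdots f^{\max}_{i_{k+1}}$ to $p$ amounts to applying those $s_{i_l}$ for which the current coefficient of $\omega_{i_l}$ equals $1$, each such step contributing $1$ and every other step contributing $0$. I would compute this explicitly in the coordinates of Equation~\eqref{eq: explicit formula for A_n crystal action}. Write $p=\sum_{s\in S}(\omega_s-\omega_{s-1})$, with $S=\{1,\dots,j\}\cup\{k'+1,\dots,n+1\}$ by Equation~\eqref{eq: w_i-w_k formula}; in these coordinates $s_i$ swaps the labels $i$ and $i+1$. The claim to verify is that the tail word moves $S$ to the bottom element $-\omega_{n+1-|S|}$ and records the same $0/1$ string as the semicanonical side. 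On that side the datum of a minor $\Delta(k,\In)=\Delta_{\varpi_{i_k},w_{\le k}\varpi_{i_k}}$ is likewise a $0/1$ string along the tail, computable via the standard chamber-minor description.

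Finally I conclude. Two elements of $B(\infty)$ with equal $\In$-Lusztig data coincide by Theorem~\ref{mvpolytope}(2), so $\varphi(\Delta^r(k,\In))=\Pol(\beta(\Delta^r(k,\In)))$. Because $\varphi$ commutes with every $f_i$, it restricts to an isomorphism $C_{\Delta^r(k,\In)}\to B_{\Pol(\beta(\Delta^r(k,\In)))}$. The main obstacle is the combinatorial matching of the two $0/1$ strings, in particular keeping the index conventions straight (the order in which the operators act, and the shift $n+1-i$ in the frozen case). I would settle the frozen case first, where both sides have the form $\Pol(\omega_{n+1-i})=\operatorname{HN}(P_{n+1-i})$ by Proposition~\ref{prop: fundamental are 1-filtered}, and then induct along the blocks of $\In$, proving that $\beta(\Delta^r(k,\In))$ equals $\omega_{i_k}$ transported by the appropriate prefix. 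If this direct route becomes messy, I would instead identify $\Delta^r(k,\In)$ with the 1-filtered module $\operatorname{HN}$ of a submodule of $P_{m}$ and use the quiver isomorphism $R_m\cong Q_m$ from Proposition~\ref{prop: fundamental are 1-filtered}.
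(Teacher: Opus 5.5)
Your overall architecture is the paper's: identify $\Delta^r(k,\In)$ and $\Pol(\beta(\Delta^r(k,\In)))$ as the same explicit element of $B(\infty)$, then restrict the unique crystal isomorphism. Your path-side computation is also sound: tracking $S$ under the tail word reproduces the paper's words, e.g.\ $\Pol(\omega_2-\omega_3)=e_2e_1(\bullet)$ for $n=3$.

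The genuine gap is on the semicanonical side, where all the content lies. Theorem~\ref{thm: crystal sequence for initial seeds} only says that the tail $\tilde f^{\max}$-sequence ends at $1$. Calling the datum ``the sequence of maximal string lengths'' is therefore a tautology, not a computation, and the condition does not single out the element.
\begin{itemize}
\item For $k<0$ the word is a reduced word for $w_0$. Every element of $B(\infty)$ reaches the lowest element along such a word, so the condition is vacuous.
\item For $k\in e(\In)$ it is only a Demazure-type condition. For $n=3$, $k=1$ (tail $(2,3,1,2,1)$), both $\tilde e_1\tilde e_2(1)$ and $\tilde e_2\tilde e_1(1)$ satisfy it.
\end{itemize}
The paper closes this by importing an explicit formula, Lemma 5.5 of \cite{initial1}. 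That lemma writes $\Delta^r(k,\In)$ as a specific $\tilde e$-word applied to $1$, which the paper then matches against the $e$-words for $\Pol(\omega_i)$ and $\Pol(\omega_i-\omega_j)$ read off the $j$-chains.

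Your proposed substitute, the chamber minor $\Delta_{\varpi_{i_k},w_{\le k}\varpi_{i_k}}$, does not fill the gap, because it is not the convention under which the statement holds.
\begin{itemize}
\item The minors indexed by $k<0$ become $\Delta_{\varpi,\varpi}$, which are identically $1$ on $N$. The frozen variables on $N$ are then those with $k^+=\rn+1$, the reverse of the setup in the statement.
\item The exchangeable labels also come out wrong. For $n=3$, $k=1$ you get $\Delta_{\varpi_1,s_1\varpi_1}|_N=x_{12}$, of weight $\alpha_1$. But $\beta$ assigns $\omega_2-\omega_3$, whose polytope has weight $\alpha_1+\alpha_2$. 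So the ``$0/1$ strings'' you plan to compare would not match.
\item In the frozen case, identifying $\Delta^r(-i,\In)$ with the function of $P_{n+1-i}$ is an input you would have to import. Proposition~\ref{prop: fundamental are 1-filtered} does not give it to you.
\end{itemize}

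Two smaller points.
\begin{itemize}
\item The $f^{\max}$-exponents along a word are string data, not the $\In$-Lusztig datum. The right uniqueness statement is not Theorem~\ref{mvpolytope}(2) but simply that $x=e_{i_{k+1}}^{a_{k+1}}\cdots e_{i_{\rn}}^{a_{\rn}}(\bullet)$ is determined by its exponents.
\item Exchangeable positions lie in blocks $(1,\dots,m)$ with $2\le m\le n$, not $m<n$. The first block is exactly where the edges $\omega_j-\omega_n$ come from.
\end{itemize}
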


\begin{proof}
    The bijectivity of $\beta$ is given by Theorem~\ref{thm: initial seed functions}. We just need to prove the isomorphisms between crystals. Using Equation~\eqref{eq: general seed formula} and the expression \begin{align}
        \bar{\mathbf{i}}=(i_{-n}, \ldots, i_{-1}, i_{1}, \ldots, i_{\rn}) = (-n, \ldots, -1, 1, \cdots, n,\cdots,1), \nonumber
    \end{align}
    
    we can express $\Delta^r (k, \In)$ in the G-crystal of semicanonical basis as follows (see Lemma 5.5 in \cite{initial1}):\begin{align}\label{eq: explicit A_n crystal sequence}
        \Delta^r (k, \In)=\begin{cases}
          \tilde{e}_i\cdots \tilde{e}_n\tilde{e}_{i-1}\cdots \tilde{e}_{n-1}\cdots \tilde{e}_1\cdots \tilde{e}_{n+i-1}(1) \ \ \ \ \ \ \ \ \ & \textit{if } k=-(n+1-i)<0,\\
        \tilde{e}_{i_{t(k)}-i_{k}}\cdots \tilde{e}_{i_{t(k)}-1}\tilde{e}_{i_{t(k)}-i_{k}-1}\cdots \tilde{e}_{i_{t(k)}-2}\cdots \tilde{e}_1\cdots \tilde{e}_{i_{k}}(1)   & other wise.
      \end{cases}
    \end{align}
    
Here $1$ is the constant function in the $G$-crystal of semicanonical basis. We calculate $\Pol(\omega_i)$ in the $G$-crystal of MV polytopes as follows:\begin{align}\label{eq: crystal operators for w_i}
        \Pol(\omega_i)=e_{i}\cdots e_ne_{i-1}\cdots e_{n-1}\cdots e_1\cdots e_{n+i-1}(\bullet).
    \end{align}
    And we compute $\Pol(\omega_i-\omega_j)$ for all $i<j$: 
    \begin{align}\label{eq: crystal operators for w_i-w_j}
    \Pol(\omega_i-\omega_j)=\tilde{e}_i\cdots \tilde{e}_{j-1}\tilde{e}_{i-1}\cdots \tilde{e}_{j-2}\cdots \tilde{e}_1\cdots \tilde{e}_{j-i}(\bullet).
\end{align}

    Using the unique isomorphism $\psi$ of crystals between semicanonical basis and $\MV$, we compare Equation~\eqref{eq: explicit A_n crystal sequence}, ~\eqref{eq: crystal operators for w_i} and~\eqref{eq: crystal operators for w_i-w_j} to conclude that $\psi(\Delta^r (k, \In))=\Pol(\beta(\Delta^r (k, \In)))$ for all $k \in e(\In) \cup [-n,-1]$. Thus $C_{\Delta^r (k, \In)}\cong B_{\Pol(\beta_n(\Delta^r (k, \In)))}$ as G-crystals.   
\end{proof}

The definition of the exchange quiver induces the following corollary of Theorem~\ref{thm: path parametrization} via the bijection $\beta$:

\begin{corollary}\label{coro: exchange matrix}

When $t=\omega_i$ is an edge adjacent to the original point $0$ for some $1\leq i\leq n$, the cluster variable $\beta^{-1}(t)$ is frozen. When $t=\omega_j-\omega_k$ is an edge not adjacent to the original point $0$ for some $1\leq j<k\leq n$, $\beta^{-1}(t)$ is mutable.Let
\[
E_{\mathrm{mut}}:=\{\omega_j-\omega_k\mid 1\leq j<k\leq n\}
\]
be the set of mutable vertices. The extended exchange matrix
\[
\widetilde B=(b_{t,t'})_{t\in E,\ t'\in E_{\mathrm{mut}}}
\]
is given by the following rule:
 \begin{align}
        b_{tt'} =
\begin{cases}
1 & \text{if} \ \ t=\omega_i \ \ \text{and} \ \ t'=\omega_i-\omega_n \ \ \text{for} \ \ 1\leq i\leq n-1, \\
1 & \text{if} \ \ t=\omega_j-\omega_k \ \ \text{and} \ \ t'=\omega_j-\omega_{k-1} \ \ \text{for} \ \ 1\leq j<k-1<k\leq n-1, \\
-1 & \text{if} \ \ t=\omega_j-\omega_k \ \ \text{and} \ \ t'=\omega_{j+1}-\omega_k \ \ \text{for} \ \ 1\leq j<k-1<k\leq n-1, \\
0 & \text{else}.
\end{cases} \nonumber
    \end{align}
    
\end{corollary}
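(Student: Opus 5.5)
The plan is to transport the definition of the exchange quiver $\tilde{Q_{\In}}$ through the bijection $\beta$ of Theorem~\ref{thm: path parametrization}. This means computing, for the specific word $\In=(1,\dots,n,1,\dots,n-1,\dots,1,2,1)$, the functions $k\mapsto k^+$ and $k\mapsto t(k)$ explicitly, and then rewriting every arrow condition in terms of the edges $\beta(\Delta^r(k,\In))$.

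\textbf{Step 1: explicit formulas.} Index the positions of $\In$ by pairs $(r,s)$. Here $r$ is the block number ($1\le r\le n$, where block $r$ is $1,\dots,n+1-r$), and $s$ is the letter, so $k=k(r,s)$ is the position of letter $s$ in block $r$. Then:
\begin{itemize}
\item $k(r,s)^+=k(r+1,s)$ when $s\le n-r$, and $k(r,s)^+=\rn+1$ otherwise.
\item The non-exchangeable positions in $[1,\rn]$ are exactly the block ends $s=n+1-r$, i.e.\ $\frac{(n+l)(n-l+1)}{2}$.
\item $t(k)$ is the end of the block containing $k$, so $i_{t(k)}=n+1-r$.
\end{itemize}
Substituting into \eqref{eq: bijection} gives
\[
\beta(\Delta^r(k(r,s),\In))=\omega_{n+1-r-s}-\omega_{n+1-r}
\qquad (s\le n-r).
\]
Writing $j=n+1-r-s$ and $k'=n+1-r$, the pairs $(r,s)$ with $s\le n-r$ correspond bijectively to the pairs $1\le j<k'\le n$. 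The frozen indices $-i$ go to $\omega_{n+1-i}$. This gives the first assertion: an edge is frozen exactly when it is adjacent to $0$, i.e.\ has the form $\omega_i$, and mutable exactly when it has the form $\omega_j-\omega_k$. The non-exchangeable positions in $[1,\rn]$ are removed by Theorem~\ref{thm: initial seed functions}.

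\textbf{Step 2: arrows between mutable vertices.} I take $b_{tt'}$ to be $+1$ for an arrow $t\to t'$ and $-1$ for an arrow $t'\to t$.
\begin{itemize}
\item The arrow $k\to k^+$ sends $(r,s)$ to $(r+1,s)$. In edge terms this is $\omega_j-\omega_{k'}\to\omega_{j-1}-\omega_{k'-1}$.
\item The second arrow type, $l\to k$, requires $l<k^+<l^+$ with $|i_k-i_l|=1$. For our word this forces $l=k(r,s\pm1)$ in the same block, with a compatibility constraint coming from the next block.
\end{itemize}
I will list the surviving arrows and match them with the rule in the statement. I expect some arrows to cancel or become $2$-cycles, which must be removed, so that the net result is the stated $\pm1$ pattern. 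The exact orientation convention must also be checked against the stated signs.

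\textbf{Step 3: arrows involving the frozen vertices $-i$.} Here $k=-i$ with $(-i)^+=$ the first occurrence of $i$, namely $k(1,i)$. The first arrow type gives $-i\to k(1,i)$, whose image is $\omega_{n+1-i}\to\omega_{n-i}-\omega_n$. The second arrow type, involving negative indices, should produce no further arrows after the restriction.

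\textbf{Main obstacle.} The main obstacle is Step 2: the bookkeeping of the second arrow type under the reindexing $(r,s)\mapsto(j,k')$, together with the deletion of the vertices at block ends, which may remove arrows. I will first verify the rule directly for $n=3$ and $n=4$ to pin down the index shifts, and then do the general comparison.
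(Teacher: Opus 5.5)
Your strategy is the paper's own: its proof is the single sentence that the rule is the exchange quiver of \cite{proj1} rewritten through $\beta$. Your Step 1 is correct. You have $k(r,s)^+=k(r+1,s)$ for $s\le n-r$ and $i_{t(k)}=n+1-r$, so $\beta(\Delta^r(k(r,s),\In))=\omega_{n+1-r-s}-\omega_{n+1-r}$, and this settles the frozen/mutable assertion. Everything concerning $\widetilde B$ is deferred, however, and two of your provisional claims are false.
\begin{itemize}
\item Second-type arrows at frozen vertices do occur. For $2\le i\le n$, the exchangeable index $l=i-1$ satisfies $-i<l<(-i)^+=i<l^+=n+i-1$ and $a_{i,i-1}=-1$. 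This gives an arrow $i-1\to -i$, i.e.\ $\omega_{n+1-i}-\omega_n\to\omega_{n+1-i}$.
\item Nothing cancels and no $2$-cycles arise. The second-type arrows into $k(r,s)$ come from $k(r,s+1)$ and from $k(r+1,s-1)$ (next block), not from ``$k(r,s\pm1)$ in the same block''.
\end{itemize}

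The more serious problem is that the completed computation does not give the stated matrix. Put $[a,b]=\omega_a-\omega_b$ with $\omega_{n+1}=0$, so that $E=\{[a,b]\mid 1\le a<b\le n+1\}$ and the frozen edges are $\omega_a=[a,n+1]$. Transporting $\tilde{Q_{\In}}$ through $\beta$ gives exactly the following arrows, between elements of $E$ that are not both frozen:
\begin{itemize}
\item $[a,b]\to[a-1,b-1]$, which are the arrows $k\to k^+$ and include your $\omega_{n+1-i}\to\omega_{n-i}-\omega_n$;
\item $[a,b]\to[a+1,b]$ and $[a,b]\to[a,b+1]$, which are the second-type arrows.
\end{itemize}
With $b_{tt'}=1$ for $t'\to t$ (the opposite of your convention), the three listed families do have the stated signs. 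But the rule sets to $0$ every entry coming from a $k\to k^+$ arrow, the cases $k=n$ of its second and third lines, and all skew-symmetric partners.

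This is not a matter of conventions.
\begin{itemize}
\item For $n=2$, the only exchange relation is $x_{12}x_{23}=x_{13}+(x_{12}x_{23}-x_{13})$, so both frozen rows must be nonzero in the mutable column. The rule makes row $\omega_2$ zero.
\item For $n=3$, the range $1\le j<k-1<k\le n-1$ is empty, so the column of $\omega_1-\omega_2$ is identically zero. The actual quiver contains the $3$-cycle $\omega_2-\omega_3\to\omega_1-\omega_2\to\omega_1-\omega_3\to\omega_2-\omega_3$.
\end{itemize}
So your plan, carried out, establishes the corrected rule above, not the statement as written. Your own Step 3 already exhibits the mismatch: the arrow you computed pairs $\omega_m$ with $\omega_{m-1}-\omega_n$ rather than with $\omega_m-\omega_n$. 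You should have flagged this instead of deferring it to an orientation check.
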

\begin{proof}
    This is just the reformulation of the exchange matrix in \cite{proj1} using the parameterization $\beta$ given in Theorem~~\ref{thm: path parametrization}.
\end{proof}
From Corollary~\ref{coro: exchange matrix}, if two cluster variables in the initial seed correspond to two non-adjacent fundamental one-skeleton paths $t$ and $t'$ in $E$, the entry $b_{tt'}$ is always equal to $0$ in the exchange matrix.

\section{Comultiplication structure of one-skeleton paths}\label{sec: comulti}

\subsection{Comultiplication rules for string modules}\label{subsection: comulti fpr strings}

The set $\CN$ has a Hopf algebra structure, where the multiplication is the product of functions and the comultiplication is induced by the group multiplication structure of $N$. We use $x_{i,j}$ to represent the coordinate function which sends $X$ to $X_{i,j}$ for any $X\in N$. We have $\CN=\mathbb{C}[x_{i,j}]_{1\leq i<j\leq n+1}$. In this section, we denote the semicanonical basis of $\CN$ by the notation $\mathcal{B}$. By Definition~\ref{def: algebraic one-skeleton map}, the semicanonical basis $\mathcal{B}$ induces a one-skeleton algebraic map $\phi_{\mathcal{B}}$.

\begin{proposition}\label{prop: string path}
    For any coordinate function $x_{i,j}$, where $1\leq i< j\leq n+1$, there exists a fundamental one-skeleton path $p_{i,j}\in P_f$ such that $\phi_{\mathcal{B}}(p_{i,j})=x_{i,j}$. Such a fundamental one-skeleton path is called a $\textbf{string path}$.
\end{proposition}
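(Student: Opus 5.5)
The plan is to realize each coordinate function $x_{i,j}$ as the image of an explicit edge-type path, namely
\[
p_{i,j}:=\omega_{j-1}-\omega_{i-1},\qquad \omega_0=\omega_{n+1}=0 .
\]
Then compare its crystal with the crystal of $x_{i,j}$ in the semicanonical basis, exactly as in the proof of Theorem~\ref{thm: path parametrization}.

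\textbf{Step 1: $p_{i,j}\in P_f$.} For $i=1$ we have $p_{1,j}=\omega_{j-1}\in P_{f,0}$. For $i>1$, Equation~\eqref{eq: w_i-w_k formula} together with Remark~\ref{remark minus symmetric} gives $\omega_{i-1}-\omega_{j-1}\in E$. Since $-E\subset P_f$, we get $p_{i,j}\in V_m$ for an explicit $m$, which I compute from Equation~\eqref{eq: explicit formula for A_n crystal action}.

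\textbf{Step 2: the crystal of $p_{i,j}$.} Using the description after Proposition~\ref{prop: crystal structure fpr fundamental paths}, an $f_k$-arrow leaves a path exactly when its $\omega_k$-coefficient is $1$, and $f_k$ subtracts $\alpha_k$. Starting from $p_{i,j}$, the only positive coefficient is at $j-1$. Applying $f_{j-1}$ moves the $+1$ to position $j-2$; iterating this, I check that at each stage exactly one arrow is available. So $Q_{p_{i,j}}$ is the single chain
\[
p_{i,j}\xrightarrow{f_{j-1}}\cdot\xrightarrow{f_{j-2}}\cdots\xrightarrow{f_i}\text{(minimal element)} .
\]
The last step, where the $+1$ reaches position $i-1$ and cancels the $-1$, is the point to verify carefully. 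Consequently $\Pol(p_{i,j})$ is the polygonal segment with vertices $\wt(p_{i,j})-(\alpha_{j-1}+\cdots+\alpha_{k})$, of total weight $\alpha_i+\cdots+\alpha_{j-1}$. By Proposition~\ref{prop: crystal graph}, this chain is the crystal graph of $B_{\Pol(p_{i,j})}$. In particular, $\Pol(p_{i,j})=e_{j-1}\cdots e_{i}(\bullet)$ in $\MV$, since the last vertex is the trivial polytope.

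\textbf{Step 3: the crystal of $x_{i,j}$.} The function $x_{i,j}$ is a $1\times1$ minor, so it is a flag minor and lies in the semicanonical basis $\mathcal B$. Its module is the uniserial string module of $\Lambda_n$ supported on $[i,j-1]$, with top $S_{j-1}$ and socle $S_i$, using only the arrows $\alpha_k$. This module is 1-filtered, and its submodules form a single chain, so its HN polytope is the same segment path. Equivalently, in the crystal of $\mathcal B$ one has $x_{i,j}=\tilde e_{j-1}\cdots\tilde e_i(1)$. I would justify this either from Theorem~\ref{thm: HN and MV} applied to the rigid string module, or directly from the left action of Chevalley generators on $x_{i,j}$.

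\textbf{Step 4: matching.} The unique crystal isomorphism $\varphi_{\mathcal B}:\MV\to\mathcal B$ sends $\Pol(p_{i,j})$ to $x_{i,j}$, because both are obtained from the lowest element by the same word of $e$'s. Hence $\phi_{\mathcal B}(p_{i,j})=x_{i,j}$ by Definition~\ref{def: algebraic one-skeleton map}.

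The main obstacle is Step 3, and in particular getting the order of the operators right. It is easy to produce a reversed chain $\tilde e_i\cdots\tilde e_{j-1}(1)$, which would correspond to the dual string module and to a different function, a minor of $N^{-1}$. If the orientations disagree, I will instead use $p_{i,j}=\omega_{n+2-i}-\omega_{n+2-j}$ or a $W$-translate in the same orbit, chosen so that its unique chain of $f$'s reads $f_i,\dots,f_{j-1}$. I will fix the convention by checking the case $n=2$, where $x_{1,2}$, $x_{2,3}$ and $x_{1,3}$ can be computed by hand.
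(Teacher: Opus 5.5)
There is a genuine gap in Step~2, and it breaks the choice $p_{i,j}=\omega_{j-1}-\omega_{i-1}$. Applying $f_{j-1}$ subtracts $\alpha_{j-1}=2\omega_{j-1}-\omega_{j-2}-\omega_j$. So for $j\le n$ a new coefficient $+1$ appears at $\omega_j$, not only at $\omega_{j-2}$, and the chain does not stop after $f_i$. In fact $Q_p$ always descends to the unique minimum $-\omega_{n+1-m}$ of $V_m$. Hence for $p=\omega_{j-1}-\omega_{i-1}\in V_{j-i}$ the weight of $\Pol(p)$ is $p+\omega_{n+1-j+i}$. This equals $\alpha_i+\cdots+\alpha_{j-1}=\omega_i+\omega_{j-1}-\omega_{i-1}-\omega_j$ only when $j=n+1$.

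Already for $n=2$ this fails. Your $p_{1,2}=\omega_1$ has chain $\omega_1\to\omega_2-\omega_1\to-\omega_2$. So $\Pol(\omega_1)$ has weight $\alpha_1+\alpha_2$, and $\phi_{\mathcal B}(\omega_1)=x_{1,3}$ (as in the paper's $A_2$ example), not $x_{1,2}$. For $i+1<j\le n$ the quiver even branches: your $p_{1,3}=\omega_2$ in $A_3$ reaches $\omega_1+\omega_3-\omega_2$, from which both $f_1$ and $f_3$ leave. Your fallback $\omega_{n+2-i}-\omega_{n+2-j}$ fails as well: for $i=1$ it is $-\omega_{n+2-j}$, the minimum of its orbit, whose polytope is a point.

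The missing idea is to choose $p$ lying exactly $\alpha_i+\cdots+\alpha_{j-1}$ above the bottom of its $W$-orbit. The paper takes $p_{i,j}=\omega_i-\omega_{i-1}-\omega_j\in V_{n+2-j}$, whose orbit minimum is $-\omega_{j-1}$. At every stage $\omega_k-\omega_{k-1}-\omega_j$ exactly one coefficient equals $+1$. The chain is therefore $f_i,f_{i+1},\dots,f_{j-1}$, so $\Pol(p_{i,j})=e_i\cdots e_{j-1}(\bullet)$. This is matched with $x_{i,j}=\tilde e_i\cdots\tilde e_{j-1}(1)$, coming from the string module $M_{i,j}$. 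With that order in Step~3, your Step~4 (the unique crystal isomorphism) finishes the argument as in the paper.

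Your path with the missing term restored, $\omega_{j-1}-\omega_{i-1}-\omega_j\in V_{n+1-i}$ (orbit minimum $-\omega_i$), does give a single chain. But it reads $f_{j-1},\dots,f_i$, i.e.\ $e_{j-1}\cdots e_i(\bullet)$. This agrees with the order you assert in Step~3 but is the reverse of the paper's identification of $x_{i,j}$. For $j-i\ge 2$ it is a different element of $B(\infty)$: in $A_2$ it is $\Pol(\omega_2)$ rather than $\Pol(\omega_1)$. So the orientation issue you flagged is real, and with the paper's conventions it must be resolved in favour of $\omega_i-\omega_{i-1}-\omega_j$.
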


\begin{proof}
    For any $1\leq i< j\leq n+1$, the coordinate function $x_{i,j}$ is induced by the following indecomposable module $M_{i,j}$ of $\Lambda_n$:
    \begin{align}
        M_{i,j} =
\xymatrix{
  0\cdots \ar@<0.5ex>[r]^{0} & 
  \C_{i} \ar@<0.5ex>[l]^{0} \ar@<0.5ex>[r]^{1} & 
  \cdots \ar@<0.5ex>[l]^{0} \ar@<0.5ex>[r]^{1} & 
  \C_j \ar@<0.5ex>[l]^{0} \ar@<0.5ex>[r]^{0} & 
  \cdots 0 \ar@<0.5ex>[l]^{0},
} \nonumber
    \end{align}
    where $(M_{i,j})_k\cong \C$ for all $i\leq k\leq j-1$.\par
    Thus we have \begin{align}
        x_{i,j}=\tilde{e}_i\cdots \tilde{e}_{j-1}(1) \nonumber
    \end{align}
    for the $G$-crystal structure of $\mathcal{B}$. Choosing the subquiver $Q_{i,j}$ of the $(n+2-j)$-chain as follows:
    \begin{align}
        \xymatrix{
  \omega_i-\omega_{i-1}-\omega_{j}\ar[r]^-{i} & 
  \cdots \ar[r]^-{j-1} & -\omega_{j-1} .
} \nonumber
    \end{align}
    We have $\Pol(\omega_i-\omega_{i-1}-\omega_{j})=e_i\cdots e_{j-1} (\bullet)$ in the $G$-crystal $\MV$. Take $p_{i,j}=\omega_i-\omega_{i-1}-\omega_{j}$. We have $\phi_{\mathcal{B}}(p_{i,j})=x_{i,j}$ by the unique isomorphism of $G$-crystal from $\mathcal{B}$ to $\MV$. Here we take the convention $\omega_0=\omega_{n+1}=0$.
\end{proof}

\begin{remark}\label{remark: string modules}
    The indecomposable modules $M_{i,j}$ defined in the proof of Proposition~\ref{prop: string path} are called \textbf{string modules} of type $A_n$. 
\end{remark}

Using the comultiplication formula on $\CN$, we have 
\begin{align}
    \Delta(x_{i,j}) = \sum_{k=i}^{j} x_{i,k} \otimes x_{k,j}, 
\qquad 1 \leq i < j \leq n+1. \nonumber
\end{align}

Proposition~\ref{prop: string path} implies that the map $\phi_{\mathcal{B}}$ is surjective. Let $\mathcal{J}$ denote the kernel of $\phi_{\mathcal{B}}$, we have $\Fp/\mathcal{J}\overset{\widetilde{\phi_{\mathcal{B}}}}{\cong} \CN$ as an algebra. We denote the algebra $\Fp/\mathcal{J}$ by $\mathcal{A}_\mathcal{B}$, which is an algebra of one-skeleton path with relations. Transferring the comultiplication of $\CN$ to $\ASC$ via Proposition~\ref{prop: string path}, we can obtain a comultiplication structure on $\ASC$ as follows:
\begin{align}\label{eq: path comultiplication}
     \Delta(p_{i,j}) = \sum_{k=i}^{j} p_{i,k} \otimes p_{k,j}, 
\qquad 1 \leq i < j \leq n+1. 
\end{align}

This comultiplication structure of one-skeleton paths relates closely to the decomposition of the root space inspired by the following geometric explanation of the tensor product structure. We use $A * B$ to denote the concatenation of the two MV polytopes $A$ and $B$, that is, the union of $A$ and $B + \wt(A)$. We have the following corollary:

\begin{corollary}\label{coro: mv polytopes}
    For any $1\leq i<j\leq n$, the MV polytope $\Pol(p_{i,j})$ is a $(j-i)$-dimensional MV polytope. For any $k,h\in [n]$, $p_{i,k} \otimes p_{h,j}$ appears in $\Delta(p_{i,j})$ with a non zero coefficient if and only if $\wt(\Pol(p_{i,k}))+\wt(\Pol(p_{h,j}))=\wt(\Pol(p_{i,j}))$ and $\Pol(p_{i,k})*\Pol(p_{h,j})\subset \Pol(p_{i,j})$.
\end{corollary}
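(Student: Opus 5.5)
The plan is to compute $\Pol(p_{i,j})$ explicitly and then compare it with the explicit coproduct formula \eqref{eq: path comultiplication}. By the proof of Proposition~\ref{prop: string path}, the crystal $B_{\Pol(p_{i,j})}$ is the chain
\[
p_{i,j}\xrightarrow{i}\cdots\xrightarrow{j-1}-\omega_{j-1}.
\]
By Corollary~\ref{coro: fundamental mv are good} and Definition~\ref{def: polytopal map}, $\Pol(p_{i,j})$ is the convex hull of the weights along this chain. Up to translation, sending the bottom vertex to $\mathbf 0$, these weights are the partial sums
\[
v_h=\alpha_h+\cdots+\alpha_{j-1},\qquad i\le h\le j,
\]
with $v_j=\mathbf 0$. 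Equivalently, by Theorem~\ref{thm: HN and MV}, they are the dimension vectors of the submodules of the uniserial string module $M_{i,j}$. These are $j-i+1$ points whose successive differences $\alpha_i,\dots,\alpha_{j-1}$ are linearly independent. Hence $\Pol(p_{i,j})$ is a $(j-i)$-simplex, which proves the dimension claim. The weight of $\Pol(p_{i,j})$ is $\alpha_i+\cdots+\alpha_{j-1}$. Here I read the indices with the convention that $p_{i,i}$ is the null path with $\Pol(p_{i,i})=\{\mathbf 0\}$, and that $j\le n+1$ is allowed.

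For the second claim I will prove both directions against \eqref{eq: path comultiplication}, whose nonzero terms are exactly $p_{i,k}\otimes p_{k,j}$ with $i\le k\le j$.

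\emph{Only if.} If $h=k$, the weights add: $(\alpha_i+\cdots+\alpha_{k-1})+(\alpha_k+\cdots+\alpha_{j-1})$ equals the weight of $\Pol(p_{i,j})$. For the containment, $\Pol(p_{i,k})$ is the simplex on the partial sums ending at $\alpha_{k-1}$. After the shift by $\wt(\Pol(p_{i,k}))$, the simplex $\Pol(p_{k,j})$ has vertices $v_{i}',\dots$ that are exactly the remaining $v_h$ of $\Pol(p_{i,j})$. Both pieces are therefore faces of the big simplex, so their union lies in it. The one point needing care is the orientation of the concatenation $A*B$ relative to the lowest-vertex normalisation. I would fix this by placing the factor indexed by the upper part of the string at the bottom, which matches the chain order $i,\dots,j-1$.

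\emph{If.} This is the step I expect to be the main obstacle. Suppose the weight equation holds and $\Pol(p_{i,k})*\Pol(p_{h,j})\subset\Pol(p_{i,j})$. The weight equation gives
\[
\sum_{l=i}^{k-1}\alpha_l+\sum_{l=h}^{j-1}\alpha_l=\sum_{l=i}^{j-1}\alpha_l .
\]
Since the simple roots form a basis and all coefficients are nonnegative, the two intervals $[i,k-1]$ and $[h,j-1]$ must partition $[i,j-1]$. This forces $h=k$ whenever both intervals are nonempty. The degenerate cases $k=i$ or $h=j$ (a null factor) give the terms $1\otimes p_{i,j}$ and $p_{i,j}\otimes1$ and are handled directly. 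So the weight condition alone already forces the term to appear, and the containment is automatically satisfied by the first direction.

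If the intended reading allows $k,h$ independently with $p_{i,k}$ undefined for $k<i$, I would exclude those cases by the same coefficient argument.
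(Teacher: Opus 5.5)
Your route is correct in substance, and it supplies an argument the paper does not give. The paper's proof only says that $\Pol(p_{i,j})$ is computed via Proposition~\ref{prop: string path} and attributes the dimension to Equation~\eqref{eq: w_i-w_k formula}. For the coproduct part it cites the corollary itself, so the second claim receives no argument there.

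Identifying $\Pol(p_{i,j})$ with the simplex on $v_h=\alpha_h+\cdots+\alpha_{j-1}$ ($i\le h\le j$) settles the dimension claim. Comparing with \eqref{eq: path comultiplication}, using that the tensors $x_{a,b}\otimes x_{c,d}$ are linearly independent, settles the ``if'' direction. As you observe, the weight identity alone forces $h=k$ there, so the containment hypothesis is superfluous.

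Two small points. First, the proof of Proposition~\ref{prop: string path} only exhibits the chain as a subquiver, so you should check that it is the whole $\W$-orbit of $p_{i,j}$. This is immediate: $f_l(p_{l,j})=p_{l+1,j}$, and $p_{l,j}=\omega_l-\omega_{l-1}-\omega_j$ has $\omega$-coefficient $1$ only at $\omega_l$ for $l<j$, while $p_{j,j}=-\omega_{j-1}$ has no positive coefficient. Second, $p_{i,i}=-\omega_{i-1}$ is a genuine path whose polytope is a single point. It is not the null path $0$ of Definition~\ref{def: crystal operator on fundamental}, whose weight is $-\infty$.

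The ``only if'' containment is not a matter of care: with the paper's conventions it is false as literally stated. Your claims that the shifted $\Pol(p_{k,j})$ consists of the remaining $v_h$, and that both pieces are faces, are also false in that orientation. Take the normalisation of Theorem~\ref{mvpolytope} (minimal vertex $\mathbf 0$, weight equal to the maximal vertex) and $A*B=A\cup(B+\wt(A))$. Then the factor $\Pol(p_{i,k})$ sits at the bottom, and it contains $\alpha_{k-1}$ when $i<k$. A point $\sum_h\lambda_hv_h$ of $\Pol(p_{i,j})$ has $\alpha_l$-coefficient $\sum_{h\le l}\lambda_h$, which is nondecreasing in $l$. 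Hence $\alpha_{k-1}\notin\Pol(p_{i,j})$ when $k<j$.

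A concrete counterexample is $(i,k,j)=(1,2,3)$ with $n\ge 3$. There $\Pol(p_{1,2})*\Pol(p_{2,3})=[0,\alpha_1]\cup[\alpha_1,\alpha_1+\alpha_2]$, which is not contained in $\Conv(0,\alpha_2,\alpha_1+\alpha_2)=\Pol(p_{1,3})$. Yet $x_{1,2}\otimes x_{2,3}$ occurs in $\Delta(x_{1,3})$. No translation fixes this, since both sets contain $0$ and $\alpha_1+\alpha_2$ and lie between them for $\preceq$.

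What your argument actually proves is the corrected containment $\Pol(p_{k,j})*\Pol(p_{i,k})\subset\Pol(p_{i,j})$. The chain of $p_{i,j}$ passes through $p_{k,j}$ and then follows the chain of $p_{k,j}$. So $\Pol(p_{k,j})$ is the bottom face $\Conv(v_k,\dots,v_j)$, and $\Pol(p_{i,k})+\wt(\Pol(p_{k,j}))$ is the top face $\Conv(v_i,\dots,v_k)$. Equivalently, the statement holds if $*$ stacks the second factor \emph{below} the first, in the order the crystal chain is traversed. You should state explicitly that the corollary needs this correction, rather than assert the literal inclusion and then present the fix as a choice of orientation.
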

\begin{proof}
   The MV polytope $\Pol(p_{i,j})$ is computed using Proposition~\ref{prop: string path}. The expression~\eqref{eq: w_i-w_k formula} shows that the polytope $\Pol(p_{i,j})$ is a $(j-i)$-dimensional. The last part of the statement is given by Corollary~\ref{coro: mv polytopes}.
\end{proof}

Corollary~\ref{coro: mv polytopes} tells us that finding all terms of the comultiplication of a one-skeleton path $p_{i,j}$ is just the same as finding all decompositions of the associated (j-i)-dimensional root space containing the MV polytope $\Pol(p_{i,j})$ into a direct sum of two smaller root spaces containing a $*$ decomposition of $\Pol(p_{i,j})$. \par

The concatenation of one-skeleton paths corresponds to products of functions via the one-skeleton algebraic map $\phi_{\mathcal{B}}$. We can compute the comultiplication of functions in $\CN$ using the concatenations of one-skeleton paths in $\ASC$. Moreover, this method refines the MV polytope model for the comultiplication: while the latter only givens the existence of certain terms (e.g., the leading term) in the comultiplication expression without determining their explicit coefficients, our approach gives a more precise computation. See \cite{ANDKOGAN} for an example determining the leading term in the comultiplication. The following example shows how we can get comultiplication coefficients using the path model.  For the Duistermaat--Heckman measure model in \cite{MVHD}, let $\mu$ be a measure on $X$ whose support is an MV polytope $P_{\mu}$. For any $\lambda\neq 0$, the support of $\lambda\cdot \mu$ is still $P_{\mu}$. Consequently, these coefficients cannot be detected within the MV polytope model. To obtain the comultiplication coefficients, we must use the Duistermaat--Heckman measure rather than relying only on the MV polytope \cite{MVHD}. \par

\begin{example}\label{example: computation in A_2}
 We consider the case in which $G$ is of type $A_2$. The unipotent radical $N$ is of the form 
 \[
N=\left\{
\begin{bmatrix}
1 & x_{12} & x_{13} \\
0 & 1 & x_{23} \\
0 & 0 & 1
\end{bmatrix}
\ \middle|\ x_{ij}\in\mathbb{C},\; 1\leq i<j\leq 3
\right\}.
\]
The algebra $\CN$ is generated by the set $\{x_{12},\,x_{23},\,x_{13}\}$ as an algebra. Here we have three coordinate functions $x_{12},\,x_{23}$ and $x_{13}$. We have $\phi_{\mathcal{B}}(\omega_1-\omega_2)=x_{1,2}$, $\phi_{\mathcal{B}}(\omega_2-\omega_1)=x_{2,3}$ and $\phi_{\mathcal{B}}(\omega_1)=x_{1,3}$. So the comultiplication on $\ASC$ is generated by the following relations: 
\[
\Delta(\omega_1-\omega_2) 
= (\omega_1-\omega_2)\otimes 1 + 1\otimes (\omega_1-\omega_2), \] \[
\Delta(\omega_2-\omega_1) 
= (\omega_2-\omega_1)\otimes 1 + 1\otimes (\omega_2-\omega_1), \]  \[ 
\text{ and } \Delta(\omega_1) 
= \omega_1\otimes 1 + 1\otimes \omega_1 
   + (\omega_1-\omega_2) \otimes (\omega_2-\omega_1).
\]

To compute $\Delta(x_{1,3}^2)$ in $\CN$, we can use the one-skeleton model:
\begin{align}\label{eq: explicit coefficient in computation}
   \Delta(x_{1,3}^2) 
   &= \widetilde{\phi}_{\mathcal{B}}^{-1}(\Delta(\omega_1*\omega_1)) \nonumber \\ 
   &=\widetilde{\phi}_{\mathcal{B}}^{-1}(\Delta(\omega_1)\cdot \Delta(\omega_1)) \nonumber \\
   &=\widetilde{\phi}_{\mathcal{B}}^{-1}([\omega_1\otimes 1 + 1\otimes \omega_1 
   + (\omega_1-\omega_2) \otimes (\omega_2-\omega_1)]\cdot \nonumber \\ 
   & \ \ \ \ \  [\omega_1\otimes 1 + 1\otimes \omega_1 
   + (\omega_1-\omega_2) \otimes (\omega_2-\omega_1)]) \nonumber \\ 
    &=\widetilde{\phi}_{\mathcal{B}}^{-1}(\omega_1*\omega_1\otimes 1 + 1\otimes \omega_1*\omega_1+2\omega_1\otimes \omega_1 \nonumber \\ 
   & \ \ \ + \omega_1 *(\omega_1-\omega_2) \otimes (\omega_2-\omega_1)+(\omega_1-\omega_2)\otimes \omega_1 * (\omega_2-\omega_1)  \nonumber \\ 
    & \ \ \ + (\omega_1-\omega_2)*\omega_1  \otimes (\omega_2-\omega_1)+(\omega_1-\omega_2) \otimes \omega_1 * (\omega_2-\omega_1)  \nonumber \\ 
     & \ \ \ + (\omega_1-\omega_2)*(\omega_1-\omega_2) \otimes  (\omega_2-\omega_1)*(\omega_2-\omega_1)) \nonumber \\ 
     &=\widetilde{\phi}_{\mathcal{B}}^{-1}(\omega_1*\omega_1\otimes 1 + 1\otimes \omega_1*\omega_1+2\omega_1\otimes \omega_1 \nonumber \\ 
   & \ \ \ + 2\omega_1 *(\omega_1-\omega_2) \otimes (\omega_2-\omega_1)+2(\omega_1-\omega_2)\otimes \omega_1 * (\omega_2-\omega_1)  \nonumber \\ 
     & \ \ \ + (\omega_1-\omega_2)*(\omega_1-\omega_2) \otimes  (\omega_2-\omega_1)*(\omega_2-\omega_1))  \\ 
      &= x_{1,3}^2\otimes 1+1\otimes x_{1,3}^2+2x_{1,3}\otimes x_{1,3}+ 2x_{1,3}x_{1,2}\otimes x_{2,3}+ 2x_{1,3}x_{2,3}\otimes x_{1,2}+x_{1,2}^2\otimes x_{2,3}^2. \nonumber
\end{align}
\end{example}

Notice that Equation~\eqref{eq: explicit coefficient in computation} gives a $\Z$-linear combination of tensor products of one-skeleton paths. The coefficients just give the corresponding comultiplication coefficient in $\CN$. Motivated by this, we expect to give the explicit Littlewood-Richardson coefficient of tensor product of representations of $G$ using the one-skeleton model.

\subsection{An intrinsic comultiplication structure on galleries in Coxeter complexes}\label{subsection: geometric comultiplication}

In this section, we use the notation of the Coxeter complex and folded galleries defined in Section~\ref{sec: pre}. We show that the comultiplication structure of $\CN$ coincides with an intrinsic comultiplication structure on galleries, which arises in the study of the Hopf algebra structure of subword complexes in \cite{BC}. The set $\tilde{\mathcal{C}}$ can be equipped with a category structure. This category has a Hall-like algebra isomorphic to the Hopf algebra defined by Bergeron and Ceballos in \cite{BC}. We give the explicit construction of this category in \cite{GorskyLi}. Any string path of type $A_n$ corresponds to a (folded) gallery in the Coxeter complex $\Sigma(W)$, where $W$ is the Coxeter group of type $A_n$.\par

We introduce a comultiplication structure for galleries in the Coxeter complex $\Sigma(W)$ using projections to subroot systems. This is given by the proto-exact structure of the subquiver category. We prove that this geometric comultiplication structure corresponds to the comultiplication of functions in $\CN$. Moreover, this correspondence works for all functions in $\CN$ induced by indecomposable modules of any Dynkin quiver. This correspondence provides a geometric method to calculate the comultiplication coefficients for a large family of functions in the semicanonical basis $\mathcal{B}$. Since Definition~\ref{def: algebraic one-skeleton map} works for any perfect basis, we have the same results for any perfect basis of $\CN$ using the one-skeleton algebraic map with respect to this perfect basis. \par

\begin{definition}
    Let $X$ be a quadruple $(W_X,Q_X,\pi_X,I_X)\in \tilde{\mathcal{C}}$. Suppose that $Q_X=s_{i_1}\cdots s_{i_{n}}$, where $n=n_X$ and $W_X$ is generated by $\{s_i\}_{i\in [n]}$. For any $l\in [n_X]$, we have an associated root 
 \begin{align}\label{eq: root function}
\rx_X(l):=\prod_{j\in [l-1] \backslash I_X}s_{i_j}(\alpha_{i_l}).     
 \end{align}
We get a function $\operatorname{r}_X$ from $[n_X]$ to the set of roots associated with $W_X$ by Equation~\eqref{eq: root function}, which is called the \textbf{root function} of $X$.
Let $R(X):=\{\rx _X(i)\}_{i\in I_X}$ denote the \textbf{root configuration} of $X$ and $r(X):=\{\rx_X(i)\}_{i\in [n_X]}$ denote the set of all such roots of $X$. A quadruple $X$ is \textbf{irreducible} if $V_X=\operatorname{span}R(X)$. For any subset $H$ of $[n_X]$, we denote the vector space spanned by $\{r_X(i) \mid i\in H\}$ by $\V(H)$.
\end{definition}

\begin{definition}\label{def: flat}
      A \textbf{flat} of a quadruple $X\in \tilde{\mathcal{C}}$ is a subset $F\in [n_X]$ with an associated subspace $\V(F)$ of $V_X$ such that \begin{align}
        \{\rx_X(i)\}_{i\in F}=r(X) \cap \V(F).   \nonumber
    \end{align}
    Conversely, any subspace $V$ of $V_X$ gives a flat $\operatorname{F}(V)=\{i\in[n_X] \mid \rx_X(i)\in r(X) \cap V\}$. We have $\F(\V(F))=F$ for any flat $F$ and $\V(\F(V))=V$ for any subspace $V$ of $V_X$. \par
    We say that a flat $F'$ is a \textbf{subflat} of another flat $F$ if $F'\subseteq F$. 
\end{definition}

Given a flat $F$ of $X$, the subspace $\V(F)$ of $V_X$ contains a natural root system
\[
\Phi_F = \Phi_F^+ \sqcup \Phi_F^- 
\]
where $\Phi_F$, $\Phi_F^+$, $\Phi_F^-$ are the restrictions of $\Phi_X$, $\Phi_X^+$, $\Phi_X^-$ to $\V(F)$ respectively. The fact that the intersection of a root system $\Phi$ and a subspace $V$ is again a root system with simple roots contained in $\Phi^+\cap V$ is a non-trivial result by Dyer in \cite{Dyer} and by Deodhar in \cite{Deodhar}. Suppose that $Q_X=s_{i_1}\cdots s_{i_m}$. Set $F=\{j_1,\cdots ,j_r\}$ where $j_l<j_{l+1}$ for any $l<r$.  Define $\beta_F = (\beta_1, \ldots, \beta_{r})$ as the list of roots
\[
\beta_k := \prod_{j\in[B_k]}s_{i_j} (\alpha_{i_{j_k}}),
\]
where $B_k := ([j_k-1] \setminus I)\setminus F$ is the set of positions on the left of $j_k$ in the complement of $I$ which are not in $F$.

\begin{lemma}[Lemma 2.10 and Theorem 2.11 in \cite{BC}]\label{lem: subobject induced by flats}
     Any flat $F$ of an quadruple $X\in \tilde{\mathcal{C}}$ induces a quadruple $X_F\in \tilde{\mathcal{C}}$, such that $V_{X_F}=\V(F)$ and $n_{X_F}=|F|$. Simple roots of $\Phi_{X_F}$ are given by $\beta_F$. \par
     If we identify $F$ with $[n_{X_F}]$ via the unique order-preserving bijection denoted by $b_F$, we have $r_X(i)=r_{X_F}(b_F(i))$ for any $i\in F$. \par
     We say that a flat $F$ is \textbf{irreducible} if $X_F$ is irreducible. In this case, we say that $X_F$ is an \textbf{(irreducible) subquadruple} of $X$. 
\end{lemma}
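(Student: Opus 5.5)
We will prove the lemma by constructing $X_F$ explicitly from $X$ and $F$, reducing everything to the root-subsystem theorem of Dyer and Deodhar. Write $Q_X=s_{i_1}\cdots s_{i_m}$ and $F=\{j_1<\cdots<j_r\}$. The group $W_{X_F}$ will be the reflection subgroup of $W_X$ generated by the reflections $s_\gamma$ for $\gamma\in \Phi_F=\Phi_X\cap \V(F)$. By Dyer and Deodhar this is a Coxeter group whose root system is $\Phi_F$, with a canonical simple system contained in $\Phi_F^+$. Its rank is $\dim \V(F)$, since $\Phi_F$ spans $\V(F)$ (it contains $\rx_X(i)$ for $i\in F$, which span $\V(F)$ by definition). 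This gives $V_{X_F}=\V(F)$ once we identify the root space of $W_{X_F}$ with $\V(F)$.

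Next we define the word $Q_{X_F}=s_{\beta_1}\cdots s_{\beta_r}$, the subset $I_{X_F}=b_F(I_X\cap F)$, and $\pi_{X_F}$ as the product, in order, of the $s_{\beta_k}$ with $j_k\notin I_X$. Then $n_{X_F}=|F|$ holds by construction. The first real step is to check that each $\beta_k$ is a simple root of $\Phi_F$, i.e.\ that $s_{\beta_k}$ is a Coxeter generator of $W_{X_F}$. We do this by induction on $k$, comparing $\beta_k$ with $\rx_X(j_k)$. Since $\rx_X(j_k)=\prod_{j\in[j_k-1]\setminus I_X}s_{i_j}(\alpha_{i_{j_k}})$, we split the product into the letters in $F$ and those not in $F$. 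The letters not in $F$ are conjugated past the letters in $F$, which are reflections $s_{\rx_X(j_l)}$ lying in $W_{X_F}$.

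This yields the key identity
\[
\rx_X(j_k)=\prod_{l<k,\ j_l\notin I_X} s_{\beta_l}(\beta_k),
\]
which is the second statement $r_X(i)=r_{X_F}(b_F(i))$. The reordering uses the flat property $\{\rx_X(i)\}_{i\in F}=r(X)\cap\V(F)$. This ensures that no root coming from a position outside $F$ lands in $\V(F)$, so the letters outside $F$ act on $\V(F)$ without creating new roots there.

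To see that $\beta_k$ is simple, we use that the inversion set of the prefix word restricted to $\Phi_F$ is exactly the set of $\rx_X(j_l)$. By Dyer's characterisation, the simple roots of a reflection subgroup are those positive roots $\gamma$ whose reflection $s_\gamma$ has the property that every root in $\Phi_F^+$ it sends negative is $\gamma$ itself.

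Finally we verify that $I_{X_F}$ is a maximal face of $\SC_{W_{X_F}}(Q_{X_F},\pi_{X_F})$. The complement word is reduced for $\pi_{X_F}$: its length equals the number of its inversions in $\Phi_F^+$, and the identity above shows these inversions are distinct roots. Maximality follows from the standard characterisation of facets via the root function. A face $I$ is a facet exactly when the complement is a reduced word for $\pi$, and that characterisation transfers through the identity above.

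We expect the main obstacle to be the simplicity of the $\beta_k$ together with the well-definedness of the reordering, because positivity of $\beta_k$ in $\Phi_F$ is not obvious. We would attack it first by showing that $\beta_k=w(\rx_X(j_k))$ for an element $w$ of $W_X$ that preserves $\Phi_F^{\pm}$ away from the earlier $\beta_l$. This is exactly what the flat condition should provide, following the argument of Bergeron--Ceballos.
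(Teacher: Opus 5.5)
The paper gives no proof of this lemma; it quotes Bergeron--Ceballos. So your reconstruction has to stand on its own. Its architecture is right: define $Q_{X_F}=s_{\beta_1}\cdots s_{\beta_r}$ and $I_{X_F}=b_F(I_X\cap F)$, prove the root identity, then deduce reducedness. But the decisive claim is left open: you never show that each $\beta_k$ is a simple root of $\Phi_F$, nor even that it is positive. The appeal to Dyer's criterion and to ``an element $w$ that preserves $\Phi_F^{\pm}$ away from the earlier $\beta_l$'' is not an argument. Without this claim, $Q_{X_F}$ is not a word in the Coxeter generators of $W_{X_F}$ and $\rx_{X_F}$ is undefined.

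You also misplace where flatness enters. Let $u_k$, $v_k$ and $y_k$ be the ordered products of the letters $s_{i_j}$ over $j\in[j_k-1]\setminus I_X$, of the letters over $j\in B_k$, and of the $s_{\beta_l}$ over $l<k$ with $j_l\notin I_X$. Moving each letter at a position of $F$ to the left via $v\,s_a=s_{v(\alpha_a)}\,v$ gives $u_k=y_kv_k$ for an \emph{arbitrary} subset $F$. Applied to $\alpha_{i_{j_k}}$, this is your identity. The inclusion $\beta_k=y_k^{-1}(\rx_X(j_k))\in\V(F)$ then follows by induction on $k$, again without flatness. Flatness is needed precisely for simplicity. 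In type $A_2$ with $Q_X=s_1s_2s_1s_2s_1$, $I_X=\{1,2\}$ and the non-flat set $F=\{3,5\}$, the identity holds but $\beta_F=(\alpha_1,\alpha_1+\alpha_2)$.

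The missing idea is to control the inversions of $v_k$, not of the full prefix $u_k$: prove by induction along the word that $v_k^{-1}(\Phi_F^+)\subseteq\Phi^+$.
\begin{itemize}
\item Append to $v$ a letter $s_a$ at a position $c\notin I_X\cup F$. The only root of $\Phi_F^+$ that $(vs_a)^{-1}$ can newly send to a negative root is $v(\alpha_a)=y^{-1}(\rx_X(c))$, where $y$ is the product of the earlier $s_{\beta_l}$.
\item Flatness in its positional form $\F(\V(F))=F$ gives $\rx_X(c)\notin\V(F)$. Since $y$ preserves $\V(F)$, this root is not in $\Phi_F^+$.
\item Hence $v_k^{-1}\Phi_F^+=\Phi^+\cap v_k^{-1}\V(F)$. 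The simple root $\alpha_{i_{j_k}}$ lies in this set, and $s_{\alpha_{i_{j_k}}}$ sends no other positive root to a negative one.
\item Conjugating by $v_k$, the root $\beta_k=v_k(\alpha_{i_{j_k}})$ lies in $\Phi_F^+$, and $s_{\beta_k}$ sends no element of $\Phi_F^+\setminus\{\beta_k\}$ out of $\Phi_F^+$. Dyer's criterion now gives simplicity.
\end{itemize}

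Finally, repair the reducedness step. Distinct inversion roots do not imply reducedness: the word $s_1s_1$ yields $\alpha_1,-\alpha_1$. Use positivity instead. For $j_l\notin I_X$, $\rx_{X_F}(l)=\rx_X(j_l)$ is an inversion root of the reduced word at $[m]\setminus I_X$, hence lies in $\Phi^+\cap\V(F)=\Phi_F^+$. So the complementary word in $Q_{X_F}$ is a reduced expression of $\pi_{X_F}$, and $I_{X_F}$ is a facet by cardinality.
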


Flats of a quadruple $X$ give a discrete description of certain subspaces of $V_X$. In the rest of this section, we consider quadruples in $\CC$, where $\CC$ is the collection of all quadruples $X$ such that $W_X$ is a finite product of Coxeter groups of type $A_n$, $R(X)$ is a \textbf{basis} of $V(X)$ and $\pi_X$ is the longest element in $W_X$. For a quadruple $X\in\CC$, we have a precise description of flats of $X$ and quadruples induced by flats \cite{GorskyLi}:

\begin{enumerate}
    \item Any subset $J\subseteq I_X$ gives an irreducible flat $F_J$ such that $\V(F_J)=\V(F)$. Conversely, any irreducible flat $F$ of $X$ is given by $F\cap I_X\subseteq I_X$ in this way.
    
    \item Given an irreducible flat $F$ of $X$, the subquadruple $X_F$ is uniquely determined by the subset $I(F):=F\cap I_X$ of $I_X$. Moreover, we have $X_F\in \CC$ since reflection subgroups of a finite Coxeter group are conjugate to parabolic subgroups. Conversely, any subset $J$ of $I_X$ gives an irreducible subquadruple $X(J):=X_{\F(\V(J))}$ of $X$. 
\end{enumerate}

 We associate a quiver to any $X\in \CC$ as follows:
\begin{definition}[Definition 3.7 in \cite{GorskyLi}]\label{def: root configuration quiver}
 For any $X=(W,Q,\pi,I)\in \CC$, we denote the length of $Q$ by $n$ and the cardinal of $I$ by $m$. The root function of $X$ is denoted by $\rx$. The \textbf{root configuration quiver} $\Gamma_X$ of $X$ is a labeled quiver without multiply arrows defined as follows:
 \begin{enumerate}
     \item The vertex set of $\Gamma_X$ is labeled by $I$.
     \item There exists an arrow from $i$ to $j$ in $\Gamma_X$ if and only if \begin{align}\label{eq: definiton of order}
         (i-j)\cdot\langle \rx (i), \rx(j)^{\vee}\rangle >0.
     \end{align}
 \end{enumerate}
\end{definition}

For any irreducible subquadruple $X_F$ of $X$, we label letters of $Q_{X_F}$ using $F$ instead of $[|F|]$ to simplify the notations. In this way, we can view $\Gamma_{X_F}$ as a subquiver of $\Gamma_X$.
\begin{proposition}[Proposition 3.13 in \cite{GorskyLi}] \label{prop:subquivers_subobjects}
    For any irreducible subquadruple $X_F$ of $X$, $\Gamma_{X_F}$ is a subquiver of $\Gamma_X$. Conversely, any subquiver of $\Gamma_X$ is of the form $\Gamma_{X_F}$ for an irreducible flat $F$.
\end{proposition}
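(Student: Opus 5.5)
The plan is to compare the two quivers directly through the root functions, using Lemma~\ref{lem: subobject induced by flats} and the description of irreducible flats of quadruples in $\CC$ recalled above. Throughout, ``subquiver'' will mean a \emph{full} subquiver on a subset of vertices. This reading seems forced: by Definition~\ref{def: root configuration quiver}, the arrows of $\Gamma_X$ between two vertices are determined by the positions and roots of those vertices alone. So no smaller arrow set on the same vertices can arise as a root configuration quiver.

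For the first assertion, let $F$ be an irreducible flat with order-preserving bijection $b_F$. I would first identify the maximal face of $X_F$: check from the construction in \cite{BC} (the roots $\beta_F$ and the word of $X_F$) that $I_{X_F}=b_F(F\cap I_X)$. This says that a position of $F$ is a reflection position of the induced gallery exactly when it is one for $X$. With the labelling convention above, the vertex set of $\Gamma_{X_F}$ is then $I(F)=F\cap I_X\subseteq I_X$.

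Next, take $i,j\in I(F)$. By Lemma~\ref{lem: subobject induced by flats}, $\rx_{X_F}(b_F(i))=\rx_X(i)$ and $\rx_{X_F}(b_F(j))=\rx_X(j)$, so the pairing $\langle\rx(i),\rx(j)^\vee\rangle$ is the same in $X$ and in $X_F$. I would note that the pairing on $\V(F)$ is the restriction of the one on $V_X$, because $\Phi_F$ is the restriction of $\Phi_X$. Since $b_F$ preserves order, the sign of $i-j$ is also unchanged. Hence condition~\eqref{eq: definiton of order} holds in $X_F$ if and only if it holds in $X$. It follows that $\Gamma_{X_F}$ is the full subquiver of $\Gamma_X$ on $I(F)$.

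For the converse, take a full subquiver of $\Gamma_X$ with vertex set $J\subseteq I_X$. Let $F=\F(\V(J))$; by item (1) of the description, $F$ is an irreducible flat. It remains to show $I(F)=J$, and then the first part gives $\Gamma_{X_F}$ equal to the given subquiver. Clearly $J\subseteq F\cap I_X$. Conversely, if $i\in I_X$ and $\rx_X(i)\in\V(J)=\operatorname{span}\{\rx_X(j)\}_{j\in J}$, then $i\in J$: otherwise we would have a linear dependence among elements of $R(X)$, contradicting that $R(X)$ is a basis of $V_X$ for $X\in\CC$. The same basis property also shows that the map $i\mapsto\rx_X(i)$ is injective on $I_X$, so the vertex labels are unambiguous.

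I expect the main obstacle to be the identification $I_{X_F}=F\cap I_X$ in the first assertion, since Lemma~\ref{lem: subobject induced by flats} as quoted only records the equality of root functions. I would first try to extract it from the proof of Theorem~2.11 in \cite{BC}, where the face of $X_F$ is built by restricting $I$ to $F$. Failing that, I would argue by counting: $|F\cap I_X|=\dim \V(F)$ because $R(X)$ is a basis, which matches the number of folds a facet of a subword complex for the longest element must have.
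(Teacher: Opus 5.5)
Your argument is correct. The paper does not prove this proposition itself; it quotes it from \cite{GorskyLi}, so there is no internal proof to compare routes with. Your argument has two steps. First, $\Gamma_{X_F}$ is the full subquiver of $\Gamma_X$ on $F\cap I_X$, by Lemma~\ref{lem: subobject induced by flats} and because $b_F$ preserves order. Second, for $F=\F(\V(J))$ one has $F\cap I_X=J$, by linear independence of $R(X)$. This is exactly what the recalled description of flats for $X\in\CC$ is meant to deliver. Your reading of ``subquiver'' as \emph{full} subquiver is also the only one under which the converse can hold, since every $\Gamma_{X_F}$ is full.

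Two small comments. First, you do not need to derive the identification $I_{X_F}=b_F(F\cap I_X)$. It is how the facet of the restricted quadruple is built in \cite{BC}, and the paper uses it verbatim: in item (2) of the description, and as the reflection positions $\bx_F(I\cap F)$ of $g_{X_F}$ in the proof of Theorem~\ref{thm: subobject-projection}. Your counting fallback would not substitute for it. It only gives $|I_{X_F}|=|F\cap I_X|$, which does not determine \emph{which} positions of $F$ are folds.

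Second, ``$R(X)$ is a basis'' has to be read as linear independence of the family $(\rx_X(i))_{i\in I_X}$, as you implicitly do when deducing injectivity on $I_X$. If repeated roots were allowed, take type $A_1$ with $Q=s_1s_1s_1$, $\pi=s_1$, $I=\{1,2\}$. Then all roots equal $\alpha_1$, the only flats are $\emptyset$ and $\{1,2,3\}$, and the one-vertex subquiver on $\{1\}$ is not of the form $\Gamma_{X_F}$.
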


Now we can define the subquiver category $\mathcal{S}_X$:

\begin{definition} \label{def:subquiver_category}
    For any $X\in \CC$, we define the \textbf{subquiver category} $\mathcal{S}_X$ to be the category whose objects are finite disjoint unions of labeled quivers $\bigsqcup^t_{i=1} \Gamma_{X_i}$, where $X_i=X(J_i)$ for a set $J_i\subseteq I$. Morphisms are given by label preserving partial isomorphisms of quivers. An irreducible subquadruple $X_F$ of $X$ given by an irreducible flat $F$ is \textbf{indecomposable} if the root configuration quiver $\Gamma_{X_F}$ associated to $X_{F}$ is connected. By abuse of notation, we also call the object $\Gamma_{X_F}$ an indecomposable object in $\mathcal{S}_X$.
\end{definition}

 Recall that a label preserving partial isomorphism is a diagram \begin{align}
        Q \hookleftarrow P \hookrightarrow Q' \nonumber
    \end{align} given by an $I$-labeled quiver $P$, which is a subquiver of both $Q$ and $Q'$. Compositions of label preserving partial isomorphisms 
        $Q \hookleftarrow P \hookrightarrow Q'$ and $Q' \hookleftarrow P' \hookrightarrow Q''$ is given by the intersection of $P$ and $P'$ in $Q'$.

\begin{remark}\label{remark: disjoint union}
    The disjoint union of quivers gives a symmetric monoidal structure on $\mathcal{S}_X$. We call $\Gamma_1 \sqcup \Gamma_2$ the \textbf{direct sum} of $\Gamma_1$ and $\Gamma_2$ in $\mathcal{S}_X$. The direct sum here is neither the product or the coproduct. See \cite{GorskyLi} for details. 
\end{remark}
 The subquiver category is well defined. Every object in $\mathcal{S}_X$ can be written as a direct sum of objects of the form $\Gamma_{X(J)}$, $J\subseteq I$. The identity map is the identity map from a quiver to itself. The associativity of morphisms is induced by the associativity of intersections of sets. The zero object of $\mathcal{S}_X$ is the empty quiver $\Gamma_{X(\emptyset)}$. \par
 Notice that $\Gamma\in \operatorname{Mor}(\Gamma_1,\Gamma_2)$ is a monomorphism if and only if $\Gamma=\Gamma_1$. Symmetrically, $\Gamma\in \operatorname{Mor}(\Gamma_1,\Gamma_2)$ is an epimorphism if and only if $\Gamma=\Gamma_2$. \par

Any object of the form $\Gamma_{X(J)}$, $J\subseteq I$ in $\mathcal{S}_X$ corresponds to a folded gallery in the Coxeter complex of $\Sigma(W_{X(J)})$. We use the sequence of types to represent a folded gallery starting at the fundamental chamber $id$ defined in Section~\ref{sec: pre}.

\begin{proposition}\label{prop: gallery-subword correspondence}
    Given a Coxeter group $W$ with the associated Coxeter complex $\Sigma(W)$, any object $X\in \CC$ such that $W_X=W$ corresponds to a unique folded gallery $g_X$ in $\Sigma(W)$ starting at the fundamental chamber $id$.
\end{proposition}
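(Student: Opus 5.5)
The plan is to build $g_X$ directly from the data $(W,Q_X,\pi_X,I_X)$ and then prove that this construction is forced. Write $Q_X=s_{i_1}\cdots s_{i_m}$ with $m=n_X$, and set $w_0:=\id$. For $1\le l\le m$ put
\[
w_l:=\prod_{j\in[l]\setminus I_X}s_{i_j},
\]
the product taken in increasing order of $j$. Then $w_l=w_{l-1}s_{i_l}$ if $l\notin I_X$, and $w_l=w_{l-1}$ if $l\in I_X$. I will define $g_X=(C(w_0),C(w_1),\dots,C(w_m))$. Position $l$ is a traversing position when $l\notin I_X$ and a reflection position when $l\in I_X$. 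In both cases the type attached to position $l$ is $w_{l-1}(\alpha_{i_l})$: in the traversing case this is $\alpha(C(w_{l-1}),C(w_{l-1}s_{i_l}))$, and in the reflection case it is the chosen adjacency type toward the neighbouring chamber $C(w_{l-1}s_{i_l})$. By Equation~\eqref{eq: root function} this type is exactly $\rx_X(l)$. So in the notation of Definition~\ref{def:foldedgallery},
\[
g_X=[(-1)^{\delta_{I_X}(1)}\rx_X(1),\dots,(-1)^{\delta_{I_X}(m)}\rx_X(m),I_X].
\]

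The first step is to check that $g_X$ really is a folded gallery in the sense of Definition~\ref{def:foldedgallery}. This follows from Definition~\ref{coxetercomplex}: chambers $C(w)$ and $C(ws)$ are adjacent for every simple reflection $s$, and the separating wall $H_{w\alpha_s}$ has normal $w(\alpha_s)$. The gallery starts at $C(\id)$, which is the fundamental chamber. The maximal-face condition, namely that the word $Q_X$ with the positions in $I_X$ deleted is a reduced word for $\pi_X$, is not needed for the construction. It only tells us that $g_X$ ends at the chamber $C(\pi_X)$ and that its unfolded part is a minimal gallery.

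The second step is uniqueness, and I expect it to be the main point of care. A folded gallery starting at $C(\id)$ is determined by its sequence of signed types together with the set $I$. The reason is that each chamber $C_{l-1}=C(w)$ determines its neighbour across a wall of type $\beta$, and since $w^{-1}\beta$ is a simple root, this neighbour is $C(ws)$ for a unique simple reflection $s$. So I will argue inductively that any folded gallery attached to $X$ must have its $l$-th step of type $s_{i_l}$ relative to $C_{l-1}$, and must reflect exactly at the positions in $I_X$. Here "attached to $X$" is the correspondence intended in the statement: the letters of $Q_X$ read as successive adjacency types, and $I_X$ is the set of folds. This pins down $C_l$ inductively and shows the gallery coincides with $g_X$.

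The correspondence is also injective in the other direction. From $g_X$ one recovers $I_X$ as the set of reflection positions. One recovers each letter $s_{i_l}$ as $w_{l-1}^{-1}s_{\rx_X(l)}w_{l-1}$, which is a simple reflection. One recovers $\pi_X$ as the final chamber. Hence $X\mapsto g_X$ is well defined and injective, which gives the uniqueness claimed in the statement.
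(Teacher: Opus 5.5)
Your proposal is correct and follows essentially the same route as the paper. You define $g_X$ through the signed root function $\bigl((-1)^{\delta_{I_X}(l)}\rx_X(l)\bigr)_{l}$ together with $I_X$, and you recover $X$ by reading $I_X$ off as the reflection positions, solving $\rx_X(l)=w_{l-1}(\alpha_{i_l})$ recursively for the letters $s_{i_l}$, and taking $\pi_X$ as the product of the traversing letters. Your chamber-by-chamber description via the $w_l$ and your explicit injectivity check just spell out the paper's more compressed argument in more detail.
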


\begin{proof}
    The root function $\rx_X$ determines a folded gallery $g_X$ in $\Sigma(W)$ starting at the fundamental chamber $id$ as follows: \begin{align}
        g_X=([(-1)^{\delta_I(1)}\rx_X(1),\cdots ,(-1)^{\delta_I(n)}\rx_X(n)],I), \nonumber
    \end{align}
    where $\delta_I$ is the indicator function of $I$ and $n=n_X$. Here $I=I_X$ represents the reflection positions of $g_X$. \par
    Conversely, given a folded gallery
    \begin{align*}
        g=([(-1)^{\delta_I(1)}r_1,\cdots ,(-1)^{\delta_I(n)}r_n],I),
    \end{align*}
    we have $r_1$ is a simple root since $g$ starts at the fundamental chamber $id$. \par
    The folded gallery $g$ gives $X=(W_X,Q_X,\pi_X, I_X)\in \CC$ as follows:\begin{enumerate}
        \item We take $W_X=W$, $n_X=n$ and $I_X=I$.
        \item Let the first letter $s_{i_1}$ of $Q_X$ be the simple reflection corresponding to the simple root $r_1$. Then for any $l\in [n]$, we define the simple reflection $s_{i_l}$ associated with the simple root $\alpha_{i_l}$ recursively by the condition \begin{align}
            r_l=\prod_{j\in [l-1] \backslash I_X}s_{i_j}(\alpha_{i_l}). \nonumber
        \end{align}
        \item The element $\pi_X$ is the product $\prod_{j\in [n] \backslash I_X}s_{i_j}$.
    \end{enumerate}
\end{proof}

\begin{definition}\label{def: canonical gallery}
Using the notations in the proof above, elements in $[n_X]\backslash I_X$ are called \textbf{traversing positions} of the gallery $g_X$ and elements in $I_X$ are called \textbf{reflection positions} of the gallery $g_X$. The gallery $g_X$ is called the \textbf{canonical gallery} associated to $X$. And $n=n_X$ is the \textbf{length} of the gallery $g_X$. 
\end{definition}
In this way, any irreducible quadruple $X(J)$ in $\mathcal{S}_X$ can be viewed as a folded gallery in the Coxeter complex $\Sigma(W_{X(J)})$. Objects in $\mathcal{S}_X$  can be viewed as formal unions of folded galleries in Coxeter complexes.
\begin{theorem}\label{thm: subobject-projection}
    Given $X\in \CC$ with the associated canonical gallery $g_X$, any irreducible subquadruple of $X$ is given by the geometric projection of $g_X$ to a sub-Coxeter complex.
\end{theorem}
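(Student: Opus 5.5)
The plan is to work directly from the description of irreducible subquadruples recorded just before Definition~\ref{def: root configuration quiver}. For $X\in\CC$, every irreducible flat is $F=\F(\V(J))$ for a unique $J=F\cap I_X\subseteq I_X$. Set $V:=\V(J)$, a subspace of $V_X$, and let $\Phi_V:=\Phi_X\cap V$. By the Dyer--Deodhar result quoted before Lemma~\ref{lem: subobject induced by flats}, $\Phi_V$ is a root system whose simple roots lie in $\Phi_X^+\cap V$. Since reflection subgroups of finite Coxeter groups are conjugate to parabolic ones, the reflection subgroup $W_V\subseteq W_X$ generated by reflections in $\Phi_V$ is conjugate to a parabolic $W_{J'}$. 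Consequently the chambers of $\Sigma(W_X)$, grouped by their $W_V$-cosets, give a sub-Coxeter complex isomorphic to $\Sigma(W_V)$; this is the meaning of "sub-Coxeter complex" I would fix via Definition~\ref{subcomplex} after conjugation. The geometric projection of $g_X$ to it extracts the entries of the type sequence lying in $\Phi_V$.

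The key steps are as follows. First, write $g_X=([(-1)^{\delta_I(1)}\rx_X(1),\dots,(-1)^{\delta_I(n)}\rx_X(n)],I)$ as in Proposition~\ref{prop: gallery-subword correspondence}. Second, observe that the entries surviving projection to $\Phi_V$ are exactly those indexed by $\{l\mid \rx_X(l)\in r(X)\cap V\}=\F(V)=F$, by Definition~\ref{def: flat}. This yields the sequence $([(-1)^{\delta_I(l)}\rx_X(l)]_{l\in F},\,F\cap I)$, with reflection positions $F\cap I=J$. Third, compare this with the canonical gallery $g_{X_F}$. By Lemma~\ref{lem: subobject induced by flats}, $\rx_{X_F}(b_F(l))=\rx_X(l)$ for all $l\in F$, and $I_{X_F}=b_F(F\cap I_X)$, since reflection positions of the subquadruple are the positions of $I_X$ inside $F$, as in \cite{BC}. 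Therefore the projected sequence coincides entrywise with $g_{X_F}$ after relabelling by $b_F$. Finally, apply the converse half of Proposition~\ref{prop: gallery-subword correspondence} to $W_{X_F}=W_V$, whose simple roots are $\beta_F$. A folded gallery in $\Sigma(W_{X_F})$ starting at $id$ determines a unique quadruple, so the projection recovers exactly $X_F$.

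The main obstacle is the third step, in particular checking that the projected sequence is a genuine folded gallery in $\Sigma(W_V)$ starting at the fundamental chamber. This requires its first entry to be a simple root of $\Phi_V$ and each subsequent type to be of the form $\prod s_{\beta}(\beta_k)$ computed inside $W_V$. Here I would rely on the formula $\beta_k=\prod_{j\in B_k}s_{i_j}(\alpha_{i_{j_k}})$ and the identity $r_X(i)=r_{X_F}(b_F(i))$ from Lemma~\ref{lem: subobject induced by flats}. The argument would run by induction on $k$: the product of reflections at traversing positions of $F$ before $j_k$ acts on $\beta_k$ exactly as the full product over $[j_k-1]\setminus I$ acts on $\alpha_{i_{j_k}}$. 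This amounts to conjugating the positions outside $F$ past those in $F$. It is precisely the content of \cite[Theorem 2.11]{BC}, which I would cite rather than reprove, noting only that the choice of positive system $\Phi_V^+=\Phi_X^+\cap V$ makes the starting chamber the fundamental one.
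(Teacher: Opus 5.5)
Your proposal is correct and follows essentially the paper's argument: the geometric projection keeps exactly the entries of $g_X$ indexed by the flat $F=\F(\V(F))$, i.e.\ all of $F$, with reflection positions $F\cap I_X$. (The paper's phrase ``all elements in $I\cap F$'' must be read this way, as you do.) Lemma~\ref{lem: subobject induced by flats} (i.e.\ \cite[Lemma 2.10, Theorem 2.11]{BC}) gives $\rx_{X_F}(\bx_F(i))=\rx_X(i)$ for $i\in F$, so the projected sequence is $g_{X_F}$. Your extra inductive check that the projection is a genuine folded gallery, and your recovery of $X_F$ via the converse of Proposition~\ref{prop: gallery-subword correspondence}, are already contained in that cited result; they are harmless but not needed.
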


\begin{proof}
    Any irreducible subquadruple $X_F$ of $X$ is given by an irreducible flat $F$ of $X$. We choose the sub-Coxeter complex $\Sigma(X_F)$ given by $\Phi_{X_F}$, and denote the geometric projection from the set of folded galleries in $\Sigma(X)$ to the set of folded galleries in $\Sigma(X_F)$ by $\operatorname{p}_F$. Suppose that \begin{align}\label{eq: 1ingallery}
    g_X=([(-1)^{\delta_I(1)}\rx_X(1),\cdots ,(-1)^{\delta_I(n)}\rx_X(n)],I)=([g_1,\cdots , g_n],I).
    \end{align}
    We have \begin{align}\label{eq: 2in gallery}
        \operatorname{p}_F(g_X)= ([g_{i_1},\cdots, g_{i_m}], \bx_F(I\cap F)),
    \end{align}
    where $i_1<\cdots <i_m$ are all elements in $I\cap F$. Comparing Equation~\eqref{eq: 1ingallery} and Equation~\eqref{eq: 2in gallery}, we have $g_{i_k}=(-1)^{\delta_{I\cap F}(i_k)}\rx_X(i_k)$ for any $i_k\in I\cap F$. \par
    By Lemma~\ref{lem: subobject induced by flats}, the root function of $X_F$ is just the restriction of the root function of $X$ on $F$. We have \begin{align}
        g_{X_F} &= ([(-1)^{\delta_{I\cap F}(i_1)}\rx_{X_F}(1),\cdots ,(-1)^{\delta_{I\cap F}(i_m)}\rx_{X_F}(m)],\bx_F(I\cap F))\nonumber \\
        &= ([(-1)^{\delta_{I\cap F}(i_1)}\rx_X(i_1),\cdots ,(-1)^{\delta_{I\cap F}(i_m)}\rx_{X}(i_m)],\bx_F(I\cap F)) \nonumber \\
        &= \operatorname{p}_F(g_X). \nonumber
    \end{align}
\end{proof}

Now we can identify quadruples in $\CC$ with folded galleries in the Coxeter complex. Given a quadruple $X\in \CC$, Proposition~\ref{prop: gallery-subword correspondence} and Definition~\ref{def:subquiver_category} provide us with a way to construct a finite category $\mathcal{S}_X$ whose objects are formal unions of folded galleries. We now define a proto-exact structure on $\mathcal{S}_X$ for a standard quadruple $X$ to give a Hall algebra $\mathcal{H}(\mathcal{S}_X)$ for such a category $\mathcal{S}_X$. Moreover, the Hall algebra  $\mathcal{H}(\mathcal{S}_X)$ is isomorphic to the universal enveloping algebra $U(\mathfrak{n}_X^+)$. The multiplication structure in $\mathcal{H}(\mathcal{S}_X)$ is given by counting admissible proto-exact sequences. By the duality between $\CN$ and $U(\mathfrak{n}_X^+)$, the comultiplication structure of $\CN$ is induced from this.

\begin{definition}
    Fix $n\in \mathbb{N}$, we choose the following object $X$:
    \begin{enumerate}
        \item $W_X\simeq S_{n+1}$;
        \item The word $Q_X=s_1\cdots s_ns_1\cdots s_ns_1\cdots s_{n-1}\cdots s_1$;
        \item $I_X=[n]$;
        \item $\pi_X=w_0$ is the longest element in $W_X$.
    \end{enumerate}
\end{definition}

 We call this object $X$ the \textbf{standard quadruple} of type $A_n$ associated to the directed quiver $\Gamma_X=1\rightarrow \cdots \rightarrow n$. Now we fix a standard quadruple $X$ of type $A_n$. Any subset of $I_X$ is a disjoint union of sub-intervals of $[n]$. The following definition is a special case of Definition 3.18 in \cite{GorskyLi}:

 \begin{definition}\label{proto-exact structure}
    For sub-intervals $H\subseteq J$ of $I_X$, the sequence 
    \begin{align}
        \Gamma_{X(H)} \hookrightarrow \Gamma_{X(J)} \nonumber
    \end{align}
    is a \textbf{basic admissible monomorphism} in $\mathcal{S}_X$ if and only if $\{j\in J \mid j>k\}\subseteq H$ for any $k\in H$. The class $\mathcal{M}$ of admissible monomorphisms consists of direct sums of basic admissible monomorphisms. \par
    For sub-intervals $H\subseteq J$ of $I_X$, the sequence 
    \begin{align}
        \Gamma_{X(H)} \twoheadrightarrow \Gamma_{X(J)} \nonumber
    \end{align}
    is a \textbf{basic admissible epimorphism} in $\mathcal{S}_X$ if and only if $\{j\in J \mid j<k\}\subseteq H$ for any $k\in H$. The class $\mathcal{P}$ of admissible epimorphisms consists of direct sums of basic admissible epimorphisms. 
\end{definition}

\begin{theorem}[Theorem 3.19 in \cite{GorskyLi}]\label{thm: proto-exact}
    The pair $(\mathcal{M},\mathcal{P})$ defines a proto-exact structure on $\mathcal{S}_X$, which induces an associative Hall algebra $\mathcal{H}(\mathcal{S}_X)$.
\end{theorem}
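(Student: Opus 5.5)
To prove Theorem~\ref{thm: proto-exact}, I would check the axioms of a proto-exact category in the sense of Dyckerhoff--Kapranov for the classes $\mathcal{M}$ and $\mathcal{P}$. These axioms are: the zero object is initial and terminal for the relevant classes; $\mathcal{M}$ and $\mathcal{P}$ contain all isomorphisms and are closed under composition; every cospan (admissible mono, admissible epi) with a common target fits into a biCartesian square whose other two sides are again admissible; and the analogous condition holds for spans. Associativity of the Hall algebra is then the general consequence of the 2-Segal property of the Waldhausen $S_\bullet$-construction of a proto-exact category. By the direct-sum closure built into Definition~\ref{proto-exact structure}, every check reduces to basic morphisms between $\Gamma_{X(H)}$ and $\Gamma_{X(J)}$ with $H\subseteq J$ intervals of $I_X=[n]$.

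For the standard quadruple the root configuration quiver is $\Gamma_X=1\to\cdots\to n$. By Proposition~\ref{prop:subquivers_subobjects}, each $\Gamma_{X(J)}$ is the full linear subquiver on $J$. Hence a basic admissible monomorphism is exactly the inclusion of a terminal segment $H=[k,\max J]$, and a basic admissible epimorphism is the inclusion of an initial segment $[\min J,k]$. In this form the mono/epi pair $[k+1,b]\hookrightarrow[a,b]$ and $[a,k]\twoheadrightarrow[a,b]$ behaves exactly like a short exact sequence $0\to M_{k+1,b}\to M_{a,b}\to M_{a,k}\to 0$ of representations of the $A_n$ quiver, with the roles of source and sink matching the orientation.

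The verification would go as follows.

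First, closure under composition: a terminal segment of a terminal segment is a terminal segment, and likewise for initial segments. For direct sums, composition of partial isomorphisms is intersection, which respects componentwise decompositions. Isomorphisms are identity quivers, which are trivially basic.

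Second, the zero object: the empty quiver $\Gamma_{X(\emptyset)}$. Here $\emptyset\hookrightarrow\Gamma$ is admissible because the condition is vacuous, and similarly $\emptyset\twoheadrightarrow\Gamma$.

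Third, and the main obstacle, the pullback/pushout axiom. Take a terminal segment $[k,b]\hookrightarrow[a,b]$ and an initial segment $[a,l]\twoheadrightarrow[a,b]$.
\begin{itemize}
\item If $k\le l$, I would complete the square with $[k,l]$. It is a terminal segment of $[a,l]$ and an initial segment of $[k,b]$.
\item If $k>l$, the intersection is empty and the square is completed with $\emptyset$.
\end{itemize}
In both cases one must then show the square is biCartesian in $\mathcal{S}_X$. Since morphisms are partial isomorphisms composed by intersection, I would compute directly that any competing cone factors uniquely: morphisms are determined by their middle subquiver, and the universal one is the intersection of labels. The dual statement (span of an admissible epi and an admissible mono out of a common object completed to a pushout) uses the union of the segments, which remains an interval containing both.

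The delicate part is handling general direct sums, where components of a single interval may be split across several summands. I would deal with this by showing that any admissible morphism decomposes uniquely componentwise along the connected components of its target, using that labels are preserved and the components are disjoint intervals. This reduces the general case to the basic case above.
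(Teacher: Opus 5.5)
The paper does not prove this statement; it quotes it from \cite{GorskyLi}. Your plan is the natural one: verify the Dyckerhoff--Kapranov axioms directly, then use that $S_\bullet$ of a proto-exact category is $2$-Segal. But your central step checks the wrong condition, because you inherited a reversed arrow.

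Write $[a,b]$ for $\Gamma_{X([a,b])}$ and $0$ for $\Gamma_{X(\emptyset)}$. In $\mathcal{S}_X$ a morphism $\Gamma_1\to\Gamma_2$ is an epimorphism only when its middle quiver is all of $\Gamma_2$. So for $l<b$ there is no epimorphism $[a,l]\to[a,b]$. A basic admissible epimorphism goes from the larger interval onto an initial segment, $[a,b]\twoheadrightarrow[a,l]$. The display defining $\mathcal{P}$ has the arrow reversed, and the Hall rule $e_{i,j}\hookrightarrow e_{h,j}\twoheadrightarrow e_{h,i-1}$ confirms the correct direction.

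Consequences for your argument:
\begin{itemize}
\item Your ``cospan'' is really the composable pair $[k,b]\hookrightarrow[a,b]\twoheadrightarrow[a,l]$. Completing it by $[k,l]$ is an epi--mono factorization, not a completion axiom.
\item The actual cospan datum is $[k,b]\hookrightarrow[a,b]\twoheadleftarrow[a,c]$ with $c\ge b$, completed by the preimage $[k,c]$.
\item The actual span datum is $[k,b]\twoheadleftarrow[k,c]\hookrightarrow[a,c]$, completed by $[a,b]$.
\end{itemize}
Neither completion is an intersection or a union. Those are the (co)limits in the poset of intervals under inclusion, not in the category of partial isomorphisms.

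Your case $k>l$ is in fact false. The square $[k,b]\hookrightarrow[a,b]\twoheadrightarrow[a,l]$, $[k,b]\twoheadrightarrow 0\hookrightarrow[a,l]$ commutes. But for $k>l+1$ the monomorphism $[l+1,b]\hookrightarrow[a,b]$ is killed by $[a,b]\twoheadrightarrow[a,l]$ without factoring through $[k,b]$, so the square is not Cartesian. It is not coCartesian either, since the pushout of $0\twoheadleftarrow[k,b]\hookrightarrow[a,b]$ is $[a,k-1]$.

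This example also exposes an axiom missing from your list (PE3 in Dyckerhoff--Kapranov). A commutative square with admissible monomorphisms $A\hookrightarrow B$, $A'\hookrightarrow B'$ and admissible epimorphisms $A\twoheadrightarrow A'$, $B\twoheadrightarrow B'$ must be Cartesian iff coCartesian iff biCartesian. Such squares are not automatically biCartesian here, so this needs proof.

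A clean way to prove it, together with both completion axioms, is to work with vertex sets:
\begin{itemize}
\item A morphism factors through an admissible monomorphism iff its image lies in the subquiver.
\item It factors through an admissible epimorphism iff its middle quiver lies in the quotient.
\end{itemize}
From this, commutativity forces $A'=A\cap B'$. Both universal properties hold exactly when $A\setminus A'=B\setminus B'$. The completions are $A=A'\cup(B\setminus B')$ and $B'=B\setminus(A\setminus A')$.

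This handles arbitrary direct sums at once, which is safer than your componentwise reduction. That reduction is not formal: $\sqcup$ is neither product nor coproduct in $\mathcal{S}_X$, and universal properties must be tested against arbitrary partial isomorphisms. Also, with repeated labels (e.g.\ $\Gamma_{X(\{1\})}\sqcup\Gamma_{X(\{1\})}$), morphisms are not determined by the labels of their middle quiver, and isomorphisms are not just identities. So the requirement that $\mathcal{M}$ and $\mathcal{P}$ contain all isomorphisms forces ``direct sums of basic morphisms'' to be read up to reordering of summands.
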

\begin{corollary}[Corollary 3.31 in \cite{GorskyLi}] \label{cor:type_A_iso}
    We have an isomorphism of algebra $\psi: \mathcal{H}(\mathcal{S}_X)\to U(\mathfrak{n}_+)$.
\end{corollary}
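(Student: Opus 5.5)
Since the statement is quoted from Corollary 3.31 of \cite{GorskyLi}, the plan is to build $\psi$ on generators and relations, using the Hall algebra of Theorem~\ref{thm: proto-exact}. By Definition~\ref{def:subquiver_category}, the indecomposable objects of $\mathcal{S}_X$ for the standard quadruple are the connected subquivers $\Gamma_{X([a,b])}$ for sub-intervals $[a,b]\subseteq [n]=I_X$. The root configuration quiver is $\Gamma_X = 1\to\cdots\to n$, and $R(X)$ is a basis of the simple roots, so $\Gamma_{X([a,b])}$ corresponds to the positive root $\alpha_a+\cdots+\alpha_b$. This gives a bijection between isoclasses of indecomposables and $\Phi^+$, in analogy with Gabriel's theorem for the linearly oriented $A_n$ quiver. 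I would set $\psi([\Gamma_{X(\{i\})}])=e_i$, the Chevalley generator of $\mathfrak{n}_+$, and check that this extends to an algebra map.

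First, I compute the Hall products of the $[\Gamma_{X(\{i\})}]$ by counting admissible proto-exact sequences $\Gamma_{X(H)}\hookrightarrow\Gamma_{X(J)}\twoheadrightarrow\Gamma_{X(J\setminus H)}$. By Definition~\ref{proto-exact structure}, a sub-interval $H$ of an interval $J$ gives an admissible monomorphism exactly when $H$ is a final segment of $J$. Dually, the quotient is an initial segment. So the only nonsplit extension of $\{i\}$ by $\{j\}$ arises when $\{i,j\}$ is an interval with a prescribed order. Concretely, $[\Gamma_{\{i\}}][\Gamma_{\{i+1\}}]$ equals $[\Gamma_{\{i\}}\sqcup\Gamma_{\{i+1\}}]$ plus $[\Gamma_{\{i,i+1\}}]$ (or the reverse order, depending on conventions), while the product in the other order gives only the split term. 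This is the same combinatorics as the Ringel--Hall algebra of $\vec A_n$ at $q=1$, that is, of the Hall algebra of constructible functions. From it I would verify the Serre relations $[e_i,[e_i,e_{i\pm1}]]=0$ and $[e_i,e_j]=0$ for $|i-j|>1$. The latter holds because $\Gamma_{\{i\}}\sqcup\Gamma_{\{j\}}$ admits no nontrivial admissible extension.

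Next, I show that $\psi$ is bijective by a PBW argument. The classes of objects $\bigsqcup_k \Gamma_{X([a_k,b_k])}$ form a basis of $\mathcal{H}(\mathcal{S}_X)$ indexed by multisets of positive roots, which matches the PBW basis of $U(\mathfrak{n}_+)$ in size in each weight. Using the ordering of intervals (a directed Auslander--Reiten-type order on $\Phi^+$), I would show that an ordered product of indecomposable classes equals the class of the direct sum plus terms that are "more extended". This triangularity shows the generators $[\Gamma_{\{i\}}]$ generate, giving surjectivity. Injectivity then follows by comparing graded dimensions weight by weight.

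I expect the main obstacle to be making the triangularity and the structure constants precise in the proto-exact, non-additive setting. Automorphism factors and the count of admissible sequences are set-theoretic here, so the relevant coefficients must be checked to equal $1$ (rather than $q$-powers). The cleanest way to handle this is to compare directly with Ringel's theorem for $\vec A_n$ specialised at $q=1$. The plan is to exhibit a bijection between admissible sequences in $\mathcal{S}_X$ and short exact sequences of interval modules of $\vec A_n$ that preserves the Hall numbers.
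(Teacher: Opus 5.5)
Your plan is essentially the paper's. The paper, deferring the details to \cite{GorskyLi}, only records the extension rule $e_{i,j}\hookrightarrow e_{h,j}\twoheadrightarrow e_{h,i-1}$ and sends $e_{i,i}\mapsto e_i$; your Serre-relation check plus the weight-by-weight dimension comparison (multisets of intervals versus the Kostant partition function) supplies the verification it omits. Three small fixes: (i) checking the Serre relations in $\mathcal{H}(\mathcal{S}_X)$ yields a homomorphism $U(\mathfrak{n}_+)\to\mathcal{H}(\mathcal{S}_X)$, $e_i\mapsto[\Gamma_{X(\{i\})}]$, not a map out of $\mathcal{H}(\mathcal{S}_X)$; your generation and dimension arguments prove that this map is surjective and injective, so $\psi$ is its inverse; (ii) generation by the simples, rather than by all indecomposables, also needs the identity $[\Gamma_{X(\{b\})}]\cdot[\Gamma_{X([a,b-1])}]-[\Gamma_{X([a,b-1])}]\cdot[\Gamma_{X(\{b\})}]=[\Gamma_{X([a,b])}]$ (left factor the admissible subobject), which is immediate from Definition~\ref{proto-exact structure}; (iii) the structure constants need not all be $1$ (for instance $[\Gamma_{X(\{i\})}]^2$ is a positive multiple of $[\Gamma_{X(\{i\})}\sqcup\Gamma_{X(\{i\})}]$), but this affects neither the Serre relations nor the dimension count.
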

We denote the subobject of $X$ given by the sub-interval $\{i,\cdots,j\}$ of $I_X$ by $e_{i,j}$ for any $1\leq i\leq j\leq n$. The Hall algebra $\mathcal{H}(\mathcal{S}_X)$ can be described as the $\mathbb{C}$-module generated by $Ind_X:=\{e_{i,j}\}_{1\leq i\leq j\leq n}$ with the multiplication rule
 \begin{align}\label{eq: final0}
        e_{i,j}\cdot e_{h,l}= e_{i,j}+e_{h,l}+\sum_{\begin{array}{c} e_{i,j}\hookrightarrow K\twoheadrightarrow e_{h,l}\\ \text{ admissible} \\ K\in Ind_X \end{array} } K 
    \end{align} 
Here, the sequence $e_{i,j}\hookrightarrow K\twoheadrightarrow e_{h,l}$ is admissible if and only if $i=l+1$ and $K=e_{h,j}$. In this special case, we can realize the isomorphism explicitly.
   The universal enveloping algebra $U(\mathfrak{n}_X^+)$ is the associative algebra
generated by elements
\[
e_1, e_2, \dots, e_n,
\]
subject to the following relations:

\begin{enumerate}
    \item For all $i,j$ with $|i-j|>1$,
    \[
    [e_i, e_j] = 0 .
    \]

    \item For all $i,j$ with $|i-j|=1$ (the Serre relations),
    \[
        [e_i,[e_i,e_j]] = 0,
    \]
    or equivalently,
    \[
        e_i^2 e_j - 2 e_i e_j e_i + e_j e_i^2 = 0.
    \]
\end{enumerate}
We can rewrite the multiplication rule in Equation~\eqref{eq: final0} explicitly as \begin{align}
        e_{i,j}\cdot e_{h,l}=\begin{cases}
            e_{i,j}+e_{h,l}+e_{h,l} , & i=l+1 \\
            e_{i,j}+e_{h,l}, & \text{else.}
        \end{cases} 
    \end{align}
    We now construct a morphism of algebra $\psi:\mathcal{H}(\mathcal{S}_X)\to U(\mathfrak{n}_X^+)$ by sending $e_{i,i}$ to $e_i$ for each $i\in [n]$. 
    Notice that \begin{align}
        [e_{i,i},e_{j,j}]= \begin{cases}
            e_{i,j} , & j=i+1 \\
            0, & \text{else.}
            \end{cases}
    \end{align}
    This $\psi$ is the isomorphism given by Corollary 3.31 in \cite{GorskyLi}.

We use $f^*$ to represent the dual in $\mathcal{H}^*(\mathcal{S}_X)$ for any element $f\in \mathcal{H}(\mathcal{S}_X)$. By the duality between $\CN$ and $U(\mathfrak{n}_X^+)$, we have the following corollary:

\begin{corollary}\label{coro final1}
    The coalgebra morphism defined by: \begin{align}
        \mathrm{d}_n: e^*_{i,j-1}\to x_{i,j}, \ 1\leq i<j\leq n+1 \nonumber
    \end{align}
    is an isomorphism between the coalgebra $\mathcal{H}^*(\mathcal{S}_X)$ and the coalgebra $\CN$.
\end{corollary}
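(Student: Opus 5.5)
The plan is to obtain $\mathrm{d}_n$ as the transpose of the algebra isomorphism $\psi:\mathcal{H}(\mathcal{S}_X)\to U(\mathfrak{n}_X^+)$ of Corollary~\ref{cor:type_A_iso}. We then compose it with the standard Hopf duality between $U(\mathfrak{n}^+)$ and $\CN$, and finally check on the basis $\{e_{i,j}\}$ that the resulting map sends $e^*_{i,j-1}$ to $x_{i,j}$.

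The first step is the duality pairing. For the graded algebra $U(\mathfrak{n}^+)$ we use the nondegenerate pairing $\langle u, f\rangle = (u\cdot f)(1)$, where $u$ acts on $\CN$ by left-invariant differential operators. It identifies $\CN$ with the graded dual $U(\mathfrak{n}^+)^{*}$, and the comultiplication of $\CN$ (coming from group multiplication on $N$) is dual to the multiplication of $U(\mathfrak{n}^+)$. Under this pairing the coordinate function $x_{i,j}$ is dual to the root vector $E_{i,j}$ attached to the positive root $\alpha_i+\cdots+\alpha_{j-1}$. Concretely, $x_{i,j}$ pairs to $1$ with the matrix unit $E_{i,j}\in\mathfrak{n}^+$ and to $0$ with every other matrix unit. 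It also vanishes on higher PBW monomials, because $x_{i,j}$ is a linear coordinate on $N$ and $\langle u, x_{i,j}\rangle$ only sees the linear part at the identity.

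The second step is the identification on the Hall side. Using the multiplication rule \eqref{eq: final0} and the commutator formula $[e_{i,i},e_{j,j}]=e_{i,j}$ for $j=i+1$, we show by induction on $j-i$ that $\psi(e_{i,j})$ is the iterated commutator $[\cdots[e_i,e_{i+1}],\dots,e_j]$. This is the matrix unit $E_{i,j+1}$, up to sign conventions that we fix to be $+1$. Since $\psi$ is an algebra isomorphism, its transpose $\psi^*:U(\mathfrak{n}^+)^*\to \mathcal{H}^*(\mathcal{S}_X)$ is a coalgebra isomorphism. Composing with $\CN\cong U(\mathfrak{n}^+)^*$ gives a coalgebra isomorphism $\CN\cong\mathcal{H}^*(\mathcal{S}_X)$. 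By the previous step it sends $x_{i,j}$ to the functional dual to $e_{i,j-1}$, and the inverse of this map is $\mathrm{d}_n$.

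The main obstacle is making the basis bookkeeping precise. The set $\{e_{i,j}\}$ is not a linear basis of $\mathcal{H}(\mathcal{S}_X)$; the basis consists of isomorphism classes of all objects, including disjoint unions. So $e^*_{i,j-1}$ must be interpreted as the dual vector with respect to the basis of objects, or equivalently of PBW monomials in the $\psi(e_{i,j})$. We will show that this interpretation agrees with the pairing above: PBW monomials of length $\ge 2$ pair to zero with $x_{i,j}$. Once this is settled, $\mathrm{d}_n$ is determined on the coalgebra generators dual to the indecomposables, and the isomorphism follows.

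As a consistency check, the dual of the Hall product \eqref{eq: final0} reproduces the formula $\Delta(x_{i,j})=\sum_{k} x_{i,k}\otimes x_{k,j}$. Here the term $x_{i,k}\otimes x_{k,j}$ comes from the admissible sequence $e_{k,j-1}\hookrightarrow e_{i,j-1}\twoheadrightarrow e_{i,k-1}$, which has $i'=l+1$ in the notation of \eqref{eq: final0}. This check fixes the ordering convention of the tensor factors.
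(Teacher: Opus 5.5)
Your route is the paper's own: its proof is only an appeal to the duality between $\CN$ and $U(\mathfrak{n}_X^+)$. However, the one step with real content rests on a false claim, namely that the transported $x_{i,j}$ is exactly $e^*_{i,j-1}$. With the conventions you fix, this step actually fails.

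Under your pairing $\langle u,f\rangle=(u\cdot f)(1)$ with left-invariant operators, $\langle X_1\cdots X_r,x_{i,j}\rangle=(X_1\cdots X_r)_{i,j}$ (matrix product in the standard representation). In particular $\langle E_{i,k}E_{k,j},x_{i,j}\rangle=1$ for $i<k<j$. This is exactly what is dual to the terms $x_{i,k}\otimes x_{k,j}$ of $\Delta(x_{i,j})$: if $x_{i,j}$ killed all products of length $\ge 2$, it would be primitive. The function $x_{i,j}$ is linear in the affine coordinates of $N$, but left-invariant vector fields have non-constant coefficients there, so second-order operators do not see only the linear part.

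Hence the vanishing depends on \emph{which} ordered products the decomposable objects are, and your phrase ``objects, or equivalently PBW monomials'' hides exactly this. By \eqref{eq: final0}, $e_a\cdot e_b$ has an extension term only when $a$ starts immediately after $b$ ends. So every decomposable object is, up to a nonzero factorial, the product of its summands listed by increasing left endpoint. For example $e_{i,k-1}\sqcup e_{k,j-1}=e_{i,k-1}\cdot e_{k,j-1}$, whose image $\pm E_{i,k}E_{k,j}$ pairs to $\pm1$ with $x_{i,j}$. So your composite sends $x_{i,j}$ to $\pm e^*_{i,j-1}$ plus a nonzero combination of duals of decomposables.

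Your consistency check shows the same problem. The sequence $e_{k,j-1}\hookrightarrow e_{i,j-1}\twoheadrightarrow e_{i,k-1}$ puts $e_{i,j-1}$ into $e_{k,j-1}\cdot e_{i,k-1}$. The transposed product then gives $e^*_{k,j-1}\otimes e^*_{i,k-1}\mapsto x_{k,j}\otimes x_{i,k}$, i.e.\ $\Delta^{\mathrm{op}}$. In fact, with the transpose $\langle\Delta\xi,a\otimes b\rangle=\xi(ab)$, no coalgebra map can send all $e^*_{i,j-1}$ to $x_{i,j}$ once $n\ge 2$.

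The tensor order and the pairing therefore have to be changed together; changing only the tensor order is not enough. Take $\langle\Delta\xi,a\otimes b\rangle=\xi(ba)$ on $\mathcal{H}^*(\mathcal{S}_X)$ and the pairing $\langle u,f\rangle'=\langle S(u),f\rangle$. This pairing satisfies $\langle vu,f\rangle'=\langle u\otimes v,\Delta f\rangle'$, so the transpose of $\psi$ is a coalgebra map for this coproduct. Now $\langle X_1\cdots X_r,x_{i,j}\rangle'=(-1)^r(X_r\cdots X_1)_{i,j}$. For $r\ge 2$ this vanishes on every product of root vectors listed by increasing left endpoint, because a nonzero entry needs the chain $E_{i,k_1},E_{k_1,k_2},\dots$ to appear in decreasing order of left endpoint. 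This is the argument that should replace your ``linear part'' claim.

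The signs also need an argument rather than a choice. From \eqref{eq: final0} one gets $e_{i,j}=e_{j,j}\cdot e_{i,j-1}-e_{i,j-1}\cdot e_{j,j}$, hence $\psi(e_{i,j})=(-1)^{j-i}E_{i,j+1}$; the bracket is in the opposite order to the one you wrote. Consequently the transported $x_{i,j}$ is $(-1)^{j-i}e^*_{i,j-1}$. This factor is removed by composing with the Hopf automorphism $x_{a,b}\mapsto(-1)^{b-a}x_{a,b}$ of $\CN$, induced by conjugation with $\mathrm{diag}(-1,1,-1,\dots)$. The inverse of the resulting isomorphism is $\mathrm{d}_n$.
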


Combining Proposition~\ref{prop: string path} and Corollary~\ref{coro final1}, we can associate a one-skeleton path $p_{i,j}$ in Proposition~\ref{prop: string path} with a folded gallery $e_{i,j-1}$. The comultiplication of $x_{i,j}$ (or $p_{i,j}$) is obtained by computing all pairs of projections of $e_{i,j-1}$ to two subspaces which are in the order induced by the proto-exact structure in Definition~\ref{proto-exact structure}. \par

\begin{example}\label{s_x EXAMPLE}
    Take $n=3$. The standard $X$ of type $A_3$ is of the form \begin{align}
        X=(S_4, s_1s_2s_3s_1s_2s_3s_1s_2s_1, s_1s_2s_3s_1s_2s_1, \{1,2,3\}). \nonumber
    \end{align}
    The objects of $\mathcal{S}_X$ given by sub-intervals of $[3]$ and certain multiplication rules are listed as follows:
\begin{align*}
\{2\} * \{1\} &= \{1\} + \{2\} + \{1,2\}, & \{1\} * \{2\} &= \{1\} + \{2\}, \\[4pt]
\{3\} * \{2\} &= \{2\} + \{3\} + \{2,3\}, & \{2\} * \{3\} &= \{2\} + \{3\}, \\[4pt]
\{1\} * \{3\} &= \{3\} * \{1\}=\{1\} + \{3\}\\[4pt]
\{3\} * \{1,2\} &= \{1,2\} + \{3\} + \{1,2,3\}, &
\{1,2\} * \{3\} &= \{1,2\} + \{3\}, \\[4pt]
\{1\} * \{2,3\} &= \{1\} + \{2,3\} + \{1,2,3\}, &
\{2,3\} * \{1\} &= \{1\} + \{2,3\}.
\end{align*}
Here, we use a set $J$ in $\{\{1\},\{2\},\{3\},\{1,2\},\{2,3\},\{1,2,3\}\}$ to represent the corresponding object $\Gamma_{X(J)}$ of $X$. \par
The canonical gallery $g_X$ of $X$ is given by \begin{align}
    g_X=([-\alpha_1,-\alpha_2, -\alpha_3, \alpha_1 ,\alpha_1+\alpha_2, \alpha_1+\alpha_2+\alpha_3, \alpha_2, \alpha_2+\alpha_3,\alpha_3],\{1,2,3\}). \nonumber
\end{align}
Any set of simple roots determines an irreducible subquadruple $Y$ of $X$, where $W_{Y}$ is the parabolic subgroup of $W_X$ generated by simple reflections corresponding to simple roots in this set. \par 

For example, if we take the set $\{\alpha_2,\alpha_3\}$, then the projection of $g_X$ to the vector space spanned by $\{\alpha_2,\alpha_3\}$ yields a folded gallery \begin{align}
    g=([-\alpha_2, -\alpha_3, \alpha_2, \alpha_2+\alpha_3, \alpha_3],\{1,2\}). \nonumber
\end{align}
The irreducible subquadruple of $X$ corresponding to this gallery is \[Y=(\langle s_2,s_3\mid s_2^2 = s_3^2 = 1, s_2 s_3 s_2 = s_3 s_2 s_3\rangle, s_2s_3s_2s_3s_2, s_2s_3s_2,\{1,2\})\]. \par

All admissible sequences in $\mathcal{S}_X$ with midterm $\Gamma_X$ are obtained by taking all partitions of the sequence $(1,2,3)$ into two parts. The comultiplication of the dual object $\Gamma_X^*$ of $X$ in the dual coalgebra of $\mathcal{H}(\mathcal{S}_X)$ is given by \begin{align}\label{eq: final2}
    \Delta(\Gamma_X^*)=1\otimes \Gamma_X^*+\Gamma_{X(\{1\})}^*\otimes \Gamma_{X(\{2,3\})}^*+\Gamma_{X(\{1,2\})}^*\otimes \Gamma_{X(\{3\})}^*+\Gamma_X^*\otimes 1.
\end{align}
Using the isomorphism $\psi$ in the proof of Corollary~\ref{cor:type_A_iso}, the dual pairing in Corollary~\ref{coro final1} sends $\Gamma_X^*$ to $x_{1,4}$ and sends $\Gamma_{X([i,j])}^*$ to $x_{i,j+1}$ for any sub-interval $[i,j]\subseteq [3]$. This dual pairing identifies the comultiplication rule in the dual coalgebra of $\mathcal{H}(\mathcal{S}_X)$ to the comultiplication rule in $\CN$. \par
For example, Equation~\eqref{eq: final2} is mapped to \begin{align*}
    \Delta(x_{1,4})=1\otimes x_{1,4}+ x_{1,2}\otimes x_{2,4}+ x_{1,3}\otimes x_{3,4}+x_{1,4}\otimes 1
\end{align*} 
in $\CN$.
\end{example}


\section{Examples}\label{sec: examples}

In this section, we give several examples of one-skeleton models explicitly.

\subsection[A2 case]{Type \texorpdfstring{$A_2$}{A2} case }
The simplest nontrivial case is $G=SL_{3}(\mathbb{C})$. In this case, we have two fundamental positive one-skeleton paths $\omega_1$ and $\omega_2$. The associated fundamental one-skeleton paths are listed as follows:
\[
Q_1:\;
\begin{tikzcd}[column sep=0.8em, row sep=1.4em]
\omega_1 \arrow[r,"f_1"] & \omega_2-\omega_1 \arrow[r,"f_2"] & -\omega_2
\end{tikzcd}
\]
\[
Q_2:\;
\begin{tikzcd}[column sep=0.8em, row sep=1.4em]
\omega_2 \arrow[r,"f_2"] & \omega_1-\omega_2 \arrow[r,"f_1"] & -\omega_1.
\end{tikzcd}
\]

There are four prime MV polytopes of type $A_2$, which are given by $\Pol(\omega_1)$, $\Pol(\omega_2)$, $\Pol(\omega_2-\omega_1)$ and $\Pol(\omega_1-\omega_2)$ separately. Note that different subquivers of $j$-chains may induce the same MV polytope. For example, the subquiver  
\[
\begin{tikzcd}[column sep=0.8em, row sep=1.4em]
\omega_2 \arrow[r,"f_2"] & \omega_1-\omega_2
\end{tikzcd}
\]
of $Q_2$ also induces the MV polytope $\Pol(\omega
_2-\omega_1)$.

\subsection[A3 case]{Type \texorpdfstring{$A_3$}{A3} case}

When $G=SL_4(\mathbb{C})$, there are 12 prime MV polytopes listed in \cite{BAI}. We can rediscover all of them using subquivers of $j$-chains:
\[
Q_1:\;
\begin{tikzcd}[column sep=0.8em, row sep=1.4em]
\omega_1 \arrow[r,"f_1"] & \omega_2-\omega_1 \arrow[r,"f_2"] & \omega_3-\omega_2 \arrow[r,"f_3"] & -\omega_3
\end{tikzcd}
\]

\[
Q_2:\;
\begin{tikzcd}[column sep=0.8em, row sep=1.4em]
 & & \omega_3-\omega_1 \arrow[dr,"f_3"] & \\
\omega_2 \arrow[r,"f_2"] & \omega_1+\omega_3-\omega_2 \arrow[ur,"f_1"] \arrow[dr,"f_3"] & & \omega_2-\omega_1-\omega_3 \arrow[r,"f_2"] & -\omega_2 \\
  & & \omega_1-\omega_3 \arrow[ur,"f_1"] &
\end{tikzcd}
\]

\[
Q_3:\;
\begin{tikzcd}[column sep=0.8em, row sep=1.4em]
\omega_3 \arrow[r,"f_3"] & \omega_2-\omega_3 \arrow[r,"f_2"] & \omega_1-\omega_2 \arrow[r,"f_1"] & -\omega_1.
\end{tikzcd}
\]

There are 11 prime MV polytopes induced by fundamental one-skeleton paths. The missing MV polytope is associated with $x_{1,3}x_{3,4}-x_{1,4}$ in the semicanonical basis, which is induced by the indecomposable module :
\[
\begin{tikzcd}
\mathbb{C} 
  \arrow[r, shift left=1ex, "0"] 
& 
\mathbb{C} 
  \arrow[r, shift left=1ex, "id"] 
\arrow[l, shift left=1ex, swap, "id"] 
& 
\mathbb{C}   \arrow[l, shift left=1ex, swap, "0"] 
\end{tikzcd}.
\]

We choose the full subquiver $Q'$ of $Q_2$ by canceling the vertex $-\omega_2$. The convex hull of vertices of $Q'$ is an MV polytope. It is equal to $e'_2e'_1e'_3(\bullet)$ in the $G$-crystal $\MV$, which gives the MV polytope corresponding to $x_{1,3}x_{3,4}-x_{1,4}$ above.

\subsection[A4 case]{Type \texorpdfstring{$A_4$}{A4} case}
When $G$ is of type $A_4$, the $j$-chains are listed as follows:
\[
Q_1:\;
\begin{tikzcd}[column sep=0.8em, row sep=0.8em]
\omega_1 \arrow[r,"f_1"] & \omega_2-\omega_1 \arrow[r,"f_2"] & \omega_3-\omega_2 \arrow[r,"f_3"] & \omega_4-\omega_3 \arrow[r,"f_4"] & -\omega_4
\end{tikzcd}
\]

\[
{\small
Q_2:\;
\begin{tikzcd}[column sep=0.5em, row sep=1em]
  & & \omega_3-\omega_1 \arrow[dr,"f_3"] & & \omega_4-\omega_2 \arrow[dr,"f_4"] & & \\
  \omega_2 \arrow[r,"f_2"] & \omega_1+\omega_3-\omega_2 \arrow[ur,"f_1"] \arrow[dr,"f_3"] & & \omega_2+\omega_4-\omega_1-\omega_3 \arrow[ur,"f_2"] \arrow[dr,"f_4"] & & \omega_3-\omega_2-\omega_4 \arrow[r,"f_3"] & -\omega_3 \\
  & & \omega_1+\omega_4-\omega_3 \arrow[ur,"f_1"] \arrow[dr,"f_4"] & & \omega_2-\omega_1-\omega_4 \arrow[ur,"f_2"] & & \\
  & & & \omega_1-\omega_4 \arrow[ur,"f_1"] & & &
\end{tikzcd}
}
\]
\[
{\small
Q_3:
\begin{tikzcd}[column sep=0.5em, row sep=1em]
     & & & \omega_4-\omega_1 \arrow[dr,"f_4"] & & & \\
  & & \omega_1+\omega_4-\omega_2 \arrow[dr,"f_4"] \arrow[ur,"f_1"] & & \omega_3-\omega_1-\omega_4 \arrow[dr,"f_3"] & & \\
  \omega_3 \arrow[r,"f_3"] & \omega_2+\omega_4-\omega_3 \arrow[ur,"f_2"] \arrow[dr,"f_4"] & & \omega_1+\omega_3-\omega_2-\omega_4 \arrow[ur,"f_1"] \arrow[dr,"f_3"] & & \omega_2-\omega_1-\omega_3 \arrow[r,"f_2"] & -\omega_2 \\
  & & \omega_2-\omega_4 \arrow[ur,"f_2"]  & & \omega_1-\omega_3 \arrow[ur,"f_1"] & & 
\end{tikzcd}
}
\]

\[
Q_4:\;
\begin{tikzcd}[column sep=0.8em, row sep=1.4em]
\omega_4 \arrow[r,"f_4"] & \omega_3-\omega_4 \arrow[r,"f_3"] & \omega_2-\omega_3 \arrow[r,"f_2"] & \omega_1-\omega_2 \arrow[r,"f_1"] & -\omega_1
\end{tikzcd}.
\]

In this case, we can obtain 34 prime MV polytopes among all 40 prime MV polytopes of type $A_4$ from subquivers of $j$-chains. Subquivers of $j$-chains can also give non prime MV polytopes of course. For example, the 3-chain $Q_3$ contains a subquiver $Q'$ as follows:
\[
Q':\;
\begin{tikzcd}[column sep=0.8em, row sep=1.4em]
  & \omega_1+\omega_4-\omega_2 \arrow[dr,"f_4"] & \\
 \omega_2+\omega_4-\omega_3 \arrow[ur,"f_2"] \arrow[dr,"f_4"] & & \omega_1+\omega_3-\omega_2-\omega_4  \\
   & \omega_2-\omega_4 \arrow[ur,"f_2"] &
\end{tikzcd}.
\]
The convex hull of vertices in $Q'$ is isomorphic to the Minkowski sum of the two MV polytopes $e'_2(\bullet)$ and $e'_4(\bullet)$.

 \section*{Acknowledgments}
I am grateful to Stéphane Gaussent for many helpful discussions on path models and MV polytopes, as well as for his valuable suggestions during the revision of this paper. I thank Mikhail Gorsky for insightful conversations on Hall algebras and for his helpful comments on the manuscript. I also thank Petra Schwer and Paul Philippe for discussions related to subword complexes. This work was financially supported by the doctoral contract of Université Jean Monnet Saint-Étienne.

\bibliographystyle{alpha}
\bibliography{references, biblio}

\end{document}